\numberwithin{equation}{section}
\numberwithin{figure}{section}
\theoremstyle{plain}
\newtheorem{thm}{\protect\theoremname}[section]
  \theoremstyle{plain}
  \newtheorem{conjecture}[thm]{\protect\conjecturename}
  \theoremstyle{definition}
  \newtheorem{defn}[thm]{\protect\definitionname}
  \theoremstyle{remark}
  \newtheorem{rem}[thm]{\protect\remarkname}
 \theoremstyle{definition}
 \newtheorem*{defn*}{\protect\definitionname}
  \theoremstyle{plain}
  \newtheorem{lem}[thm]{\protect\lemmaname}
  \theoremstyle{plain}
  \newtheorem{prop}[thm]{\protect\propositionname}
  \theoremstyle{plain}
  \newtheorem{cor}[thm]{\protect\corollaryname}
  \theoremstyle{definition}
  \newtheorem{example}[thm]{\protect\examplename}
\subjclass[2010]{11D75,11G50,14F18,14E30,14J17}
  \providecommand{\conjecturename}{Conjecture}
  \providecommand{\corollaryname}{Corollary}
  \providecommand{\definitionname}{Definition}
  \providecommand{\examplename}{Example}
  \providecommand{\lemmaname}{Lemma}
  \providecommand{\propositionname}{Proposition}
  \providecommand{\remarkname}{Remark}
\providecommand{\theoremname}{Theorem}
\begin{document}

\title{Vojta's conjecture for singular varieties}

\author{Takehiko Yasuda}
\begin{abstract}
We formulate a generalization of Vojta's conjecture in terms of log
pairs and variants of multiplier ideals. In this generalization, a
variety is allowed to have singularities. It turns out that the generalized
conjecture for a log pair is equivalent to the original conjecture
applied to a log resolution of the pair. A special case of the generalized
conjecture can be interpreted as representing a general phenomenon
that there tend to exist more rational points near singular points
than near smooth points. The same phenomenon is also observed in relation
between greatest common divisors of integer pairs satisfying an algebraic
equation and plane curve singularities, which is discussed in Appendix.
As an application of the generalization of Vojta's conjecture, we
also derive a generalization of a geometric conjecture of Lang concering
varieties of general type to singular varieties and log pairs.
\end{abstract}

\address{Department of Mathematics, Graduate School of Science, Osaka University,
Toyonaka, Osaka 560-0043, Japan, tel:+81-6-6850-5326, fax:+81-6-6850-5327}

\email{takehikoyasuda@math.sci.osaka-u.ac.jp, highernash@gmail.com}

\keywords{Vojta's conjecture, log pairs, singularities, multiplier ideals}

\thanks{This work was supported by JSPS KAKENHI Grant Numbers JP15K17510
and JP16H06337. }

\maketitle
\global\long\def\AA{\mathbb{A}}
\global\long\def\PP{\mathbb{P}}
\global\long\def\NN{\mathbb{N}}
\global\long\def\GG{\mathbb{G}}
\global\long\def\ZZ{\mathbb{Z}}
\global\long\def\QQ{\mathbb{Q}}
\global\long\def\CC{\mathbb{C}}
\global\long\def\FF{\mathbb{F}}
\global\long\def\LL{\mathbb{L}}
\global\long\def\RR{\mathbb{R}}
\global\long\def\MM{\mathbb{M}}
\global\long\def\SS{\mathbb{S}}

\global\long\def\bx{\boldsymbol{x}}
\global\long\def\by{\boldsymbol{y}}
\global\long\def\bf{\mathbf{f}}
\global\long\def\ba{\mathbf{a}}
\global\long\def\bs{\mathbf{s}}
\global\long\def\bt{\mathbf{t}}
\global\long\def\bw{\mathbf{w}}
\global\long\def\bb{\mathbf{b}}
\global\long\def\bv{\mathbf{v}}
\global\long\def\bp{\mathbf{p}}
\global\long\def\bq{\mathbf{q}}
\global\long\def\bm{\mathbf{m}}
\global\long\def\bj{\mathbf{j}}
\global\long\def\bM{\mathbf{M}}
\global\long\def\bd{\mathbf{d}}

\global\long\def\cN{\mathcal{N}}
\global\long\def\cW{\mathcal{W}}
\global\long\def\cY{\mathcal{Y}}
\global\long\def\cM{\mathcal{M}}
\global\long\def\cF{\mathcal{F}}
\global\long\def\cX{\mathcal{X}}
\global\long\def\cE{\mathcal{E}}
\global\long\def\cJ{\mathcal{J}}
\global\long\def\cO{\mathcal{O}}
\global\long\def\cD{\mathcal{D}}
\global\long\def\cZ{\mathcal{Z}}
\global\long\def\cR{\mathcal{R}}
\global\long\def\cC{\mathcal{C}}
\global\long\def\cL{\mathcal{L}}
\global\long\def\cV{\mathcal{V}}
\global\long\def\cI{\mathcal{I}}
\global\long\def\cH{\mathcal{H}}
\global\long\def\cK{\mathcal{K}}

\global\long\def\fs{\mathfrak{s}}
\global\long\def\fp{\mathfrak{p}}
\global\long\def\fm{\mathfrak{m}}
\global\long\def\fX{\mathfrak{X}}
\global\long\def\fV{\mathfrak{V}}
\global\long\def\fx{\mathfrak{x}}
\global\long\def\fv{\mathfrak{v}}
\global\long\def\fY{\mathfrak{Y}}
\global\long\def\fa{\mathfrak{a}}
\global\long\def\fb{\mathfrak{b}}
\global\long\def\fc{\mathfrak{c}}

\global\long\def\rv{\mathbf{\mathrm{v}}}
\global\long\def\rx{\mathrm{x}}
\global\long\def\rw{\mathrm{w}}
\global\long\def\ry{\mathrm{y}}
\global\long\def\rz{\mathrm{z}}
\global\long\def\bv{\mathbf{v}}
\global\long\def\bw{\mathbf{w}}
\global\long\def\sv{\mathsf{v}}
\global\long\def\sx{\mathsf{x}}
\global\long\def\sw{\mathsf{w}}

\global\long\def\Spec{\mathrm{Spec}\,}
\global\long\def\Hom{\mathrm{Hom}}

\global\long\def\Var{\mathrm{Var}}
\global\long\def\Gal{\mathrm{Gal}}
\global\long\def\Jac{\mathrm{Jac}}
\global\long\def\Ker{\mathrm{Ker}}
\global\long\def\Image{\mathrm{Im}}
\global\long\def\Aut{\mathrm{Aut}}
\global\long\def\st{\mathrm{st}}
\global\long\def\diag{\mathrm{diag}}
\global\long\def\characteristic{\mathrm{char}}
\global\long\def\tors{\mathrm{tors}}
\global\long\def\sing{\mathrm{sing}}
\global\long\def\red{\mathrm{red}}
\global\long\def\ord{\mathrm{ord}}
\global\long\def\pt{\mathrm{pt}}
\global\long\def\op{\mathrm{op}}
\global\long\def\Val{\mathrm{Val}}
\global\long\def\Res{\mathrm{Res}}
\global\long\def\Pic{\mathrm{Pic}}
\global\long\def\disc{\mathrm{disc}}
\global\long\def\height{\mathrm{ht}}
 \global\long\def\length{\mathrm{length}}
\global\long\def\sm{\mathrm{sm}}
\global\long\def\rank{\mathrm{rank}}
\global\long\def\age{\mathrm{age}}
\global\long\def\et{\mathrm{et}}
\global\long\def\hom{\mathrm{hom}}
\global\long\def\tor{\mathrm{tor}}
\global\long\def\reg{\mathrm{reg}}
\global\long\def\cont{\mathrm{cont}}
\global\long\def\crep{\mathrm{crep}}
\global\long\def\Stab{\mathrm{Stab}}
\global\long\def\discrep{\mathrm{discrep}}
\global\long\def\totaldiscrep{\mathrm{totaldiscrep}}
\global\long\def\mld{\mathrm{mld}}

\global\long\def\GL{\mathrm{GL}}
\global\long\def\codim{\mathrm{codim}}
\global\long\def\Val{\mathrm{Val}}
\global\long\def\ur{\mathrm{ur}}
\global\long\def\cHom{\mathcal{H}om}
\global\long\def\cSpec{\mathcal{S}pec}
\global\long\def\Proj{\mathrm{Proj}\,}
\global\long\def\fie{\textrm{-}\mathrm{fie}}
\global\long\def\NS{\mathrm{NS}}
\global\long\def\Disc{\mathrm{Disc}}
\global\long\def\Kbar{\overline{K}}
\global\long\def\barK{\overline{K}}
\global\long\def\id{\mathrm{id}}
\global\long\def\Supp{\mathrm{Supp}}
\global\long\def\Exc{\mathrm{Exc}}
\global\long\def\Ram{\mathrm{Ram}}
\global\long\def\NonLC{\mathrm{NonLC}}
\global\long\def\NonSC{\mathrm{NonSC}}
\global\long\def\NonC{\mathrm{NonC}}
\global\long\def\NonKLT{\mathrm{NonKLT}}
\global\long\def\center{\mathrm{center}}
\global\long\def\div{\mathrm{div}}
\global\long\def\gcdn{\mathrm{gcdn}}
\global\long\def\Sp{\mathrm{Sp}}
\global\long\def\mult{\mathrm{mult}}

\tableofcontents{}

\section{Introduction}

Vojta's famous conjecture in the Diophantine geometry was originally
stated for a smooth variety $X$ and a simple normal crossing divisor
$D$ of it. In a recent paper \cite{MR2867936}, he generalized it
to an arbitrary proper closed subscheme $D\subset X$. The aim of
the present paper is to generalize it further by allowing $X$ to
have mild singularities. Our formulation is made in terms of log pairs
and their singularities, which are basic notions in the birational
geometry, in particular, in the minimal model program. We then show
that the generalized conjecture is in fact equivalent to the original
one in a similar way as Vojta did in the cited paper. We hope that
our formulation will encourage interaction between the Diophantine
geometry and the minimal model program.

Let $X$ be a smooth variety over a number field $k$ and $D$ a simple
normal crossing divisor of it. The original conjecture of Vojta concerns
the inequality 
\begin{equation}
h_{K_{X}}(x)-m_{D}(x)\le\epsilon h_{A}(x)+d_{k}(x)+O(1).\label{eq:ineq-intro-orig}
\end{equation}
See Sections \ref{sec:Weil functions} and \ref{sec:conjectures}
for details. Recently he generalized the conjecture to an aribitrary
proper closed subscheme $D\subset X$. Let $\cI\subset\cO_{X}$ the
multiplier ideal sheaf of $(X,(1-\epsilon)D)$ for sufficiently small
$\epsilon>0$, which was denoted by $\cJ^{-}$ in \cite{MR2867936}.
According to Silverman \cite{MR919501}, we can define proximity functions
$m_{W}$ and counting functions $N_{W}$ of closed subschemes $W\subset X$
and similarly ones of ideal sheaves. In \cite{MR2867936}, Vojta generalized
the conjecture by changing the left hand side of (\ref{eq:ineq-intro-orig})
to
\[
h_{K_{X}}(x)-m_{D}(x)-m_{\cI}(x),
\]
introducing the correction term $-m_{\cI}(x)$. 

In this paper, we further generalize it, allowing the variety $X$
to have (not necessarily normal) $\QQ$-Gorenstein singularities and
$D$ to be $\QQ_{\ge0}$-linear combination of closed subschemes.
We can similarly define the ideal sheaf $\cI$ also in this case and
define another ideal sheaf $\cH$, which is again a variant of the
multiplier ideal. We change the left hand side of the inequality to
\begin{equation}
h_{K_{X}}(x)+h_{D}(x)-N_{\cH}(x)-m_{\cI}(x).\label{eq:intro-gen}
\end{equation}
If $X$ is smooth, then we have $m_{D}\le h_{D}-N_{\cH}$ and our
inequality is slightly stronger than the one of Vojta. However it
turns out that Vojta's and our generalizations are eventually equivalent
to the original conjecture. The first two terms $h_{K_{X}}+h_{D}$
of (\ref{eq:intro-gen}) is then interpreted as the height function
of the ``canonical divisor'' $K_{(X,D)}=K_{X}+D$ of the log pair
$(X,D)$, and the last two terms $-N_{\cH}(x)-m_{\cI}(x)$ as contribution
of singularities of $(X,D)$. In the special case where $D=0$ and
$X$ has only log terminal singularities, then (\ref{eq:intro-gen})
is written as
\[
h_{K_{X}}(x)-N_{\NonC(X)}(x),
\]
where $\NonC(X)$ is the non-canonical locus of $X$. As an example
showing the necessity of the correction term $-N_{\NonC(X)}(x)$,
we construct a ``rational surface of general type'' having only
quotient singularities (Section \ref{sec:Rational-surfaces-of-general-type}).

From our generalization of Vojta's conjecture, we derive the following
geometric conjecture:
\begin{conjecture}[See Corollary \ref{cor:singular-Lang-NonSC} for a more general version]
\label{conj:general-lang-intro}Let $X$ be a $\QQ$-Gorenstein variety
with a canonical divisor $K_{X}$ big. Then there exists a proper
closed subset $Z\subset X$ such that for every potentially dense
closed subvariety $Y\subset X$, either $Y$ is contained in $Z$
or $Y$ intersects $\NonC(X)$. 
\end{conjecture}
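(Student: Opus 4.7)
\emph{Proof strategy.}
The plan is to derive the statement from the generalized Vojta's conjecture applied to the log pair $(X,0)$, together with Kodaira's lemma and Northcott's theorem, in close parallel with the classical derivation of the Bombieri--Lang conjecture from Vojta's conjecture.

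Fix an ample divisor $A$ on $X$ and apply the generalized conjecture with $D=0$. For each $\epsilon>0$ and $d\ge 1$ this yields a proper Zariski closed subset $Z_{1}\subset X$ (assumed to depend only on $\epsilon$, using the uniform version of Vojta's conjecture standard in the literature) such that
\[
h_{K_{X}}(x)\le N_{\cH}(x)+m_{\cI}(x)+\epsilon\,h_{A}(x)+d_{k}(x)+O(1)
\]
for every $x\in X(\overline{k})\setminus Z_{1}$ with $[k(x):k]\le d$. By construction, the ideal sheaves $\cH$ and $\cI$ are variants of the multiplier ideal whose cosupports lie in $\NonC(X)$. Hence for any closed subvariety $Y\subset X$ with $Y\cap\NonC(X)=\emptyset$ the pullbacks $\cH\cdot\cO_{Y}$ and $\cI\cdot\cO_{Y}$ both equal $\cO_{Y}$, and the functoriality of Silverman's counting and proximity functions under the closed immersion $Y\hookrightarrow X$ yields $N_{\cH}|_{Y}=O(1)$ and $m_{\cI}|_{Y}=O(1)$.

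Next, since $K_{X}$ is big, Kodaira's lemma produces a positive integer $m$ and an effective $\QQ$-divisor $E$ with $mK_{X}\sim_{\QQ}A+E$, so that
\[
h_{K_{X}}(x)\ge \tfrac{1}{m}\,h_{A}(x)+O(1)
\]
for $x\in X\setminus Z_{2}$, where $Z_{2}:=\Supp(E)$. Set $Z:=Z_{1}\cup Z_{2}$, a proper closed subset of $X$. Given a positive-dimensional potentially dense closed subvariety $Y\subset X$ with $Y\cap\NonC(X)=\emptyset$, choose a finite extension $k'/k$ with $Y(k')$ Zariski dense in $Y$; for $x\in Y(k')$ the discriminant term $d_{k}(x)$ is bounded by a constant $C(k')$. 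Combining the previous inequalities gives, for $x\in Y(k')\setminus Z$,
\[
\bigl(\tfrac{1}{m}-\epsilon\bigr)\,h_{A}(x)\le C(k')+O(1),
\]
and choosing $\epsilon<1/m$ makes $h_{A}$ uniformly bounded on $Y(k')\setminus Z$. Northcott's theorem then forces $Y(k')\setminus Z$ to be finite, and since $\dim Y\ge 1$ and $Y(k')$ is Zariski dense in $Y$ one concludes $Y\subset Z$, as required. (Zero-dimensional subvarieties are handled by the usual convention that ``potentially dense'' refers to positive-dimensional subvarieties; otherwise no proper $Z$ could contain all closed points of $X\setminus\NonC(X)$.)

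The main obstacle is the first step: one must check that the ideals $\cH$ and $\cI$, as defined in the body of the paper, genuinely have cosupports contained in $\NonC(X)$, and confirm the requisite functoriality of Silverman's height machine under closed immersions $Y\hookrightarrow X$. Both assertions rely on the precise definitions adopted in the main text together with the standard properties of the Silverman--Vojta height machine. Once these facts are in hand, the remaining steps are essentially identical to the classical Vojta-to-Lang derivation and pose no new difficulties.
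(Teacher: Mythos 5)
Your derivation is correct and is essentially the same route the paper takes, with only a difference in organization: the paper factors the argument through an intermediate Proposition (\ref{prop:NonSC-height}), which packages Vojta's conjecture, the bigness of $K_{(X,D)}$, and the bound $N_{\cH}+m_{\cI}\le h_{\cH}\le C\,h_{\NonSC(X,D)}$ into a single height inequality $h_A\le C\,h_{\NonSC(X,D)}+O(1)$ off a fixed $Z$, and then in Corollary \ref{cor:singular-Lang-NonSC} contrasts this with Lemma \ref{lem:disjoint-height-bounded} (the vanishing of height functions along disjoint subschemes) and Northcott. You instead bound $N_{\cH}|_Y$ and $m_{\cI}|_Y$ separately; this is equivalent since $\cH\subset\cI$ have cosupports inside $\NonSC(X,D)$, and the same functoriality lemma applies. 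The "obstacles" you flag at the end are not gaps in the paper -- they are handled there: $\Supp(\cO_X/\cH(X,D))=\NonSC(X,D)$ is Proposition \ref{prop:H-sc}; $\Supp(\cO_X/\cI(X,D))\subset\NonKLT(X,D)\subset\NonSC(X,D)$ follows from Proposition \ref{prop:non-lc-locus} and the inclusion $\cH\subset\cJ\subset\cI$; and the functoriality of Weil functions under closed immersions is Proposition \ref{prop:properties-Weil-fn}(1), packaged as Lemma \ref{lem:disjoint-height-bounded}. One cosmetic gap: you apply Proposition \ref{prop:NonSC-height}-style reasoning over an enlarged field $k'$, while the conjecture's exceptional set is stated to depend only on $X,D,A,\epsilon$; as you note, this uses the uniform (in degree) form of Vojta's conjecture, which is the form the paper adopts in Conjecture \ref{conj:generalized-Vojta}.
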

If $X$ is smooth, then $\NonC(X)$ is empty. The last conjecture
in this case was raised by Lang \cite{MR1112552}. 

As a height function $h_{D}(x)$ as well as counting and proximity
functions represents a closeness to $D$, we may regard our generalization
of Vojta's conjecture as a representation of a phenomenon that there
tend to be more rational points near singular points than near smooth
points. As another representation of the same phenomenon, in Appendix,
we discuss relation of greatest common divisors and plane curve singualrities.
Let $C\subset\PP_{k}^{2}$ be an irreducible plane curve of degree
$d$ and let $m$ be the multiplicity of $C$ at $O=(0:0:1)$. Let
$h_{O}$ be a height function of $O$ regarded as a closed reduced
subscheme of $\PP_{k}^{2}$ and $h$ be the standard height of $\PP_{k}^{2}$.
From the functoliality of height functions and a small computation
of resolution of singularities, we observe that when restricted to
$C(k)$, $h_{O}$ approximates $\frac{m}{d}h$. Combining it with
Silverman's interpretation of greatest common divisors in terms of
heights, we obtain a not deep but amusing fact that if $k=\QQ$, then
the greatest common divisor $\gcd(x,y)$ for $(x,y)\in C\cap\AA_{\QQ}^{2}$,
$x,y\in\ZZ$, approximates $\max\{|x|,|y|\}^{m/d}$ (for a more general
and precise statement, see Corollary \ref{cor:gcd}). 

Throughout the paper, we fix a number field $k$. A \emph{variety}
means a separated reduced scheme of pure dimension and finite type
over $k$. We suppose that every morphism of varieties is a morphism
of $k$-schemes. 

The author would like to thank Katsutoshi Yamanoi for helpful discussion,
which in particular led the author to Conjecture \ref{conj:general-lang-intro},
and Yu Yasufuku for useful information.

\section{\label{sec:Singularities-of-log-pairs}Singularities of log pairs}

In this section, we recall the notion of log pairs in a slightly generalized
context and basics of their singularities.
\begin{defn}
A variety $X$ is is said to be \emph{$\QQ$-Gorenstein }if
\begin{enumerate}
\item $X$ satisfies Serre's condition $S_{2}$,
\item $X$ is Gorenstein in codimension one, and
\item a canonical divisor $K_{X}$ is $\QQ$-Cartier.
\end{enumerate}
\end{defn}
Note that this definition of $\QQ$-Gorenstein is more general than
the usual one in the sense that we do not assume that $X$ is normal.
(However, if he prefers, the reader may safely assume that $X$ is
normal.) These conditions appear in the definition of ``pair'' in
\cite[Def. 1.5]{MR3057950} and the one of semi-log canonical singularities
in \cite[p. 35]{MR2675555}. From the first two conditions, which
are automatic if $X$ is normal, a canonical divisor $K_{X}$ exists,
is unique up to linear equivalence and is Cartier in codimension one.
Therefore the last condition makes sense and is equivalent to that
for some $m\in\ZZ_{>0}$, the reflexive power $\omega_{X}^{[m]}:=(\omega_{X}^{\otimes m})^{**}$
of the canonical sheaf $\omega_{X}$ is invertible. 

For a $\QQ$-Gorenstein variety $X$ and a proper birational morphism
$f\colon Y\to X$ of varieties with $Y$ normal, the pull-back $f^{*}K_{X}$
is defined as a $\QQ$-Cartier $\QQ$-Weil divisor of $Y$. 
\begin{defn}
A \emph{$\QQ$-subscheme} of a variety $X$ is a formal linear combination
$D=\sum_{i=1}^{n}c_{i}D_{i}$ of proper closed subschemes $D_{i}\subsetneq X$
with $c_{i}\in\QQ$. Following the terminology for divisors, we say
that a $\QQ$-subscheme $D=\sum_{i=1}^{n}c_{i}D_{i}$ is \emph{effective
}if every $c_{i}$ is non-negative; then we write $D\ge0$. The \emph{support}
of $D$, denoted $\Supp(D)$, is defined to be the closed subset $\left(\bigcup_{c_{i}\ne0}D_{i}\right)_{\red}$
and denoted by $\Supp(D)$. 

A \emph{log pair }is the pair $(X,D)$ of a $\QQ$-Gorenstein variety
$X$ and a $\QQ$-subscheme $D$ of $X$. We say that a log pair $(X,D)$
is \emph{effective }if $D$ is effective. We say that a log pair $(X,D)$
is \emph{projective }if $X$ is projective.
\end{defn}
When $D=0$, we usually omit $D$ from the notation and identify the
log pair $(X,0)$ with the variety $X$. For instance, the discrepancy
$\discrep(X,D)$ defined below will be written also as $\discrep(X)$
when $D=0$. 
\begin{rem}
If $X$ is a normal $\QQ$-Gorenstein variety and $D$ is a $\QQ$-Cartier
$\QQ$-Weil divisor, then $D$ is written as a $\QQ$-linear combination
$\sum_{i=1}^{n}b_{i}E_{i}$ of effective Cartier divisors $E_{i}$;
this allows us to regard the pair $(X,D)$ as a log pair in the sense
defined above. \end{rem}
\begin{defn*}
A \emph{resolution }of a variety $X$ is a proper birational morphism
$f\colon Y\to X$ such that $Y$ is smooth over $k$. Let $(X,D=\sum_{i=1}^{n}c_{i}D_{i})$
be a log pair. A \emph{log resolution} of $(X,D)$ is a resolution
$f\colon Y\to X$ of $X$ such that
\begin{enumerate}
\item for every $i$, the scheme-theoretic preimage $f^{-1}(D_{i})$ is
a Cartier divisor (that is, if $\cI_{D_{i}}$ is the defining ideal
sheaf of $D_{i}$, then the pull-back $f^{-1}\cI_{D_{i}}$ as an ideal
sheaf is locally principal),
\item if we denote by $\Exc(f)$ the exceptional set of $f$, then $\Exc(f)\cup\bigcup_{i=1}^{n}f^{-1}(D_{i})_{\red}$
is a simple normal crossing divisor.
\end{enumerate}
\end{defn*}
From Hironaka's theorem, any variety has a resolution and any log
pair has a log resolution. 
\begin{defn}
For a $\QQ$-Gorenstein variety and a resolution $f\colon Y\to X$
of $X$, the \emph{relative canonical divisor }$K_{Y/X}$ \emph{of
$Y$ over} $X$ is defined as a $\QQ$-divisor of $Y$ supported in
$\Exc(f)$ as follows. If $m$ is a positive integer suh that $\omega_{X}^{[m]}$
is invertible, then the natural morphism $f^{*}\omega_{X}^{[n]}\to\omega_{Y}^{\otimes n}$
is injective and its image is written as $\omega_{Y}^{\otimes n}(\Delta)$
for some ($\ZZ$-)divisor $\Delta$. We then define 
\[
K_{Y/X}:=-\frac{1}{n}\Delta.
\]

For a log pair $(X,D)$ and a log resolution $f\colon Y\to X$ of
it, we define the \emph{relative canonical divisor $K_{Y/(X,D)}$
of $Y$ over} $(X,D)$ as the $\QQ$-divisor $K_{Y/X}-f^{*}D$. Here,
if we write $D=\sum_{i=1}^{l}c_{i}D_{i}$, then we define the \emph{pull-back}
$f^{*}D$ as $\sum_{i=1}^{l}c_{i}f^{-1}D_{i}$, which is a $\QQ$-divisor.
Let us write 
\[
K_{Y/(X,D)}=\sum_{F}a_{F}\cdot F,
\]
$F$ running over the prime divisors of $Y$. We call $a_{F}$ the
\emph{discrepancy} of $F$ with respect to $(X,D)$ and write it as
$a(F;X,D)$. 
\end{defn}

\begin{defn}
Let $X$ be a variety. A \emph{divisor over }$X$ is a prime divisor
$F$ on $Y$ for a resolution $f\colon Y\to X$. We say that such
an $F$ is called \emph{exceptional }if $f$ is not an isomorphism
at the generic point of $F$. 

Let $(X,D)$ be a log pair. We define the \emph{discrepancy }of $(X,D)$
by
\[
\discrep(X,D):=\inf\{a(F;X,D)\mid F\text{ is an exceptional divisor over \ensuremath{X}}\}.
\]
and the \emph{total discrepancy }of $(X,D)$ by 
\[
\totaldiscrep(X,D):=\inf\{a(F;X,D)\mid F\text{ is a divisor over \ensuremath{X}}\}.
\]

\end{defn}
We are mainly interested in the total discrepancy rather than the
discrepancy. It is easy to see that for a log resolution $f\colon Y\to X$
of a log pair $(X,D)$, we have
\begin{equation}
\totaldiscrep(Y,-K_{Y/(X,D)})=\totaldiscrep(X,D).\label{eq:totaldiscrep-resol}
\end{equation}

\begin{lem}
We have either
\[
\totaldiscrep(X,D)=-\infty
\]
or
\[
-1\le\totaldiscrep(X,D)\le0.
\]
\end{lem}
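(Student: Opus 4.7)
The plan is to reduce to the smooth case via equation (\ref{eq:totaldiscrep-resol}) and then handle the two bounds separately. Pick a log resolution $f\colon Y\to X$ of $(X,D)$ and set $E:=-K_{Y/(X,D)}$, so that (\ref{eq:totaldiscrep-resol}) gives $\totaldiscrep(X,D)=\totaldiscrep(Y,E)$. After possibly a further log resolution of $(Y,E)$, we may assume $Y$ is smooth and $\Supp(E)$ is simple normal crossing; it therefore suffices to prove the dichotomy for the pair $(Y,E)$ consisting of a smooth variety and a $\QQ$-divisor.

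For the upper bound, observe that a positive-dimensional variety has infinitely many prime divisors, so one can choose a prime divisor $F_{0}$ on $Y$ not contained in $\Supp(E)$. Since $K_{Y/(Y,E)}=-E$, one has $a(F_{0};Y,E)=-\ord_{F_{0}}(E)=0$, and hence $\totaldiscrep(Y,E)\le 0$.

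For the lower bound I would argue by contrapositive: assuming some divisor $F$ over $Y$ satisfies $a(F;Y,E)<-1$, I would construct divisors $F_{m}$ over $Y$ with $a(F_{m};Y,E)\to-\infty$. Choose a log resolution $g\colon Z\to Y$ on which $F$ appears as a smooth prime divisor meeting the other components of $-K_{Z/(Y,E)}$ transversally; by (\ref{eq:totaldiscrep-resol}) once more, $\totaldiscrep(Y,E)=\totaldiscrep(Z,-K_{Z/(Y,E)})$, and $F$ enters $-K_{Z/(Y,E)}$ with coefficient $c:=-a(F;Y,E)>1$. Now produce the $F_{m}$ inductively: pick a smooth codimension-$2$ subvariety $V_{1}\subset F$ avoiding the other components, blow it up to obtain an exceptional divisor $F_{1}$, and at each subsequent step blow up the intersection of the previous exceptional divisor $F_{m-1}$ with the strict transform of $F$. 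The standard discrepancy formula for a blowup along a smooth codimension-$2$ center tracks the coefficients $b_{m}$ of $F_{m}$ in the boundary $-K_{Z_{m}/(Y,E)}$ by the recursion $b_{m}=b_{m-1}+(c-1)$, giving $b_{m}=m(c-1)$ and hence $a(F_{m};Y,E)=-m(c-1)\to-\infty$.

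The main obstacle is the inductive discrepancy computation in the last step: one has to verify that at each stage the intersection of the strict transform of $F$ with the newest exceptional divisor is again a smooth codimension-$2$ center with simple normal crossings against the rest of the boundary, so that the blowup formula applies cleanly at the next step and the coefficient of the strict transform of $F$ remains $c>1$ throughout. This is a classical calculation in the theory of singularities of pairs (compare, e.g., Kollár--Mori, Corollary~2.31), and once the inductive setup is in place the recursion $b_{m}=m(c-1)$ follows from a direct term-by-term computation on each blowup.
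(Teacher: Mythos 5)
Your argument is correct and follows essentially the same route as the paper: both reduce the general $\QQ$-Gorenstein (possibly non-normal) case to the smooth case via (\ref{eq:totaldiscrep-resol}), and both then rely on the classical dichotomy for pairs on a smooth variety. The only difference is that where the paper simply cites \cite[Cor.~2.31]{MR1658959}, you unpack the standard iterated-blowup argument behind that reference (and spell out the trivial upper bound), which is fine but not a different method.
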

\begin{proof}
Firstly, since a general prime divisor has zero discrepancy, we have
\[
\totaldiscrep(X,D)\le0.
\]
The lemma is well known in the case where $X$ is normal (see \cite[Cor. 2.31]{MR1658959}).
The general case follows from (\ref{eq:totaldiscrep-resol}).
\end{proof}
We can now define three classes of singularities of log pairs as follows:
\begin{defn}
We say that $(X,D)$ is \emph{strongly canonical} (resp. \emph{Kawamata
log terminal}, \emph{log canonical}) if 
\[
\mathrm{totaldiscrep}(X,D)\ge0\quad(\text{resp. }>-1,\,\ge-1).
\]
We say that $X$ is \emph{canonical }(resp. \emph{log terminal}, log
canonical) if $(X,0)$ is strongly canonical (resp. Kawamata log terminal,
log canonical). \end{defn}
\begin{rem}
The author does not know whether the notion of strongly canonical
has been ever considered, while the other notions are quite standard.
This notion is necessary for our reformulation of Vojta's conjecture.
A log pair $(X,D)$ is said to be \emph{canonical }if $\discrep(X,D)\ge0$;
this is standard, but we do not use it in this paper. When $D=0$,
strongly canonical and canonical are equivalent notions.
\end{rem}

\begin{rem}
\label{rem:coeff-sc-klt-lc}When $X$ is normal and $\QQ$-Gorenstein
and $D$ is a $\QQ$-Cartier $\QQ$-Weil divisor, then for a log pair
$(X,D)$ being strongly canonical (resp. Kawamata log terminal, log
canonical), the multiplicity of every prime divisor in $D$ needs
to be $\le0$ (resp. $<1$, $\le1$). 
\end{rem}
The definition of total discrepancy uses all divisors over a given
variety $X$. However the following lemma allows us to compute it
in terms of a single log resolution. 
\begin{lem}
\label{lem:totaldisc-log-resol}Let $X$ be a smooth variety and let
$D=\sum_{i=1}^{l}c_{i}F_{i}$ be a $\QQ$-divisor of $X$ such that
$c_{i}\in\QQ$, $F_{i}$ are prime divisors and $\bigcup_{i=1}^{l}F_{i}$
is a simple normal crossing divisor. Then $(X,D)$ is strongly canonical
(resp. Kawamata log terminal, log canonical) if and only if $c_{i}\le0$
(resp. $<1$, $\le1$) for every $i$. \end{lem}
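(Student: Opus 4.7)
The plan is to prove the two directions of the biconditional separately.

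For the ``only if'' direction, since $X$ is smooth, the identity is a resolution and each prime component $F_i$ of $D$ is a divisor over $X$ in the sense of the paper. Since $K_{X/X}=0$ and the ``pullback'' of $D$ by the identity is $D$ itself, we get $a(F_i;X,D)=-c_i$. Hence $\totaldiscrep(X,D)\le -c_i$ for every $i$, and the hypotheses $\totaldiscrep(X,D)\ge 0$, $>-1$, $\ge -1$ force $c_i\le 0$, $<1$, $\le 1$ respectively. This is essentially Remark \ref{rem:coeff-sc-klt-lc} specialized to the SNC case.

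For the ``if'' direction, I would show by induction along a tower of smooth blowups that every divisor $F$ over $X$ satisfies $a(F;X,D)\ge 0$ (resp.\ $>-1$, $\ge -1$). By Hironaka's embedded resolution theorem, $F$ can be realized as a prime divisor on some composition $\pi\colon Y_N\to\cdots\to Y_0=X$ in which each step $\pi_j\colon Y_j\to Y_{j-1}$ is the blowup of a smooth center $Z_{j-1}$ of codimension $r_j\ge 2$, chosen so that the union of $Z_{j-1}$ with the strict transforms of the $F_i$ and the previously introduced exceptional divisors remains simple normal crossing. The inductive statement is that every coefficient appearing in $-K_{Y_j/(X,D)}$ satisfies the required inequality, the base case being the assumption on the $c_i$. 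For the inductive step, if $Z_{j-1}$ lies in exactly $s$ of the current SNC components with coefficients $\gamma_1,\ldots,\gamma_s$, a local computation using $\pi_j^* G = \widetilde{G} + E$ for the $s$ components containing $Z_{j-1}$ and $K_{Y_j/Y_{j-1}}=(r_j-1)E$ shows that strict transforms keep their coefficients while the new exceptional divisor $E$ appears in $-K_{Y_j/(X,D)}$ with coefficient
\[
-(r_j-1)+\sum_{k=1}^{s}\gamma_k.
\]
Using $s\le r_j$ (forced by the SNC condition) together with the inductive bounds on the $\gamma_k$, a case-by-case check verifies the required inequality in each of the three cases: in the strongly canonical case one bounds by $-(r_j-1)\le 0$; in the KLT case by $-(r_j-1)+s\cdot 1\le -(r_j-1)+r_j=1$ with strict inequality; and similarly for log canonical.

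The main obstacle is the reduction of an arbitrary divisor $F$ over $X$ to such an SNC-compatible tower of smooth blowups. I would deduce this from Hironaka's embedded resolution theorem applied to $(X,\bigcup F_i)$ together with the center of the valuation corresponding to $F$; any such resolution can be decomposed into a sequence of blowups of smooth centers that preserve the normal crossing condition of the evolving exceptional divisor. Once this reduction is granted, the linear-algebraic blowup computation described above handles all three cases uniformly and yields the lemma.
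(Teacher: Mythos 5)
Your argument takes a genuinely different route from the paper. The paper disposes of the lemma in three lines by citing the closed-form SNC discrepancy formula from \cite[Cor.~2.11]{MR3057950}, namely $\discrep(X,D)=\min\{1,\min_i\{1-c_i\},\min_{F_i\cap F_j\ne\emptyset}\{1-c_i-c_j\}\}$, and then observing that $\totaldiscrep(X,D)=\min\{0,\min_i\{-c_i\},\discrep(X,D)\}$. You instead re-derive the bound from scratch via induction along an SNC-compatible blowup tower; this is essentially the standard argument that underlies the cited formula, and it buys you a self-contained proof at the cost of length. The ``only if'' direction is fine, and your inductive arithmetic is correct: the new coefficient in $-K_{Y_j/(X,D)}$ is $-(r_j-1)+\sum_{k=1}^{s}\gamma_k$, the SNC transversality forces $s\le r_j$, and the three case checks (using $r_j\ge2$ for the strongly canonical case and $s\ge1\Rightarrow\sum\gamma_k<s$ to get strictness in the KLT case) all go through.

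The one place where the proposal is not yet a proof is the reduction you yourself flag as ``the main obstacle.'' Applying Hironaka's embedded resolution to $(X,\bigcup F_i)$ together with the Zariski closure of the center of $F$ produces a model on which that closure becomes an SNC divisor, but it does \emph{not} guarantee that the particular divisorial valuation $v_F$ is realized as a prime divisor on that model, and an arbitrary resolution on which $F$ does appear need not decompose as a tower of SNC-compatible smooth blowups dominated by your chain. What you actually need is the statement that every divisorial valuation over a smooth $X$ with SNC boundary is reached by finitely many blowups of smooth centers that stay SNC with the running boundary; that is a Zariski--Abhyankar style theorem on monoidal towers (see e.g.\ \cite[Lem.~2.45]{MR1658959}), not a corollary of embedded resolution alone. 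Once that input is in place, the rest of your induction is correct; as written, though, the reduction is asserted rather than justified, and the invocation of Hironaka does not close it.
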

\begin{proof}
If $c_{i}=-a(F_{i};X,D)>1$ for some $i$, then $\totaldiscrep(X,D)<-1$
and $(X,D)$ is not log canonical. Otherwise, from \cite[Cor. 2.11]{MR3057950},
\[
\discrep(X,D)=\min\{1,\min_{i}\{1-c_{i}\},\,\min_{F_{i}\cap F_{j}\ne\emptyset}\{1-c_{i}-c_{j}\}\}.
\]
The lemma follows from 
\[
\totaldiscrep(X,D)=\min\{0,\,\min_{i}\{-c_{i}\},\,\discrep(X,D)\}.
\]

\end{proof}
The above notions of singularities are local; $(X,D)$ is strongly
canonical (resp. Kawamata log terminal, log canonical) if and only
if every point $x\in X$ has an open neighborhood $U\subset X$ such
that $(U,D|_{U})$ is so. 
\begin{defn}
Let $(X,D)$ be a log pair. The \emph{non-sc locus} (resp. \emph{non-lc
locus}) of $(X,D)$ is the smallest closed subset $W\subset X$ such
that $(X\setminus W,D|_{X\setminus W})$ is strongly canonical (resp.
Kawamata log terminal, log canonical). We write it as $\NonSC(X,D)$
(resp. $\NonKLT(X,D)$, $\NonLC(X,D)$). We call the non-sc locus
$\NonSC(X,0)$ of the log pair $(X,0)$ also as the \emph{non-canonical
locus} of $X$ and denote it by $\NonC(X)$. 
\end{defn}
Clearly 
\[
\NonLC(X,D)\subset\NonKLT(X,D)\subset\NonSC(X,D).
\]

\begin{lem}
\label{lem:NonSC-NonLC-descrip}Let $f\colon Y\to X$ be a log resolution
of $(X,D)$. Then
\begin{gather*}
\NonSC(X,D)=\bigcup_{F\subset Y:a(F;X,D)<0}f(F),\\
\NonKLT(X,D)=\bigcup_{F\subset Y:a(F;X,D)\le-1}f(F)\\
\NonLC(X,D)=\bigcup_{F\subset Y:a(F;X,D)<-1}f(F),
\end{gather*}
in each of which $F$ runs over the prime divisors of $Y$ satisfying
the indicated inequality. \end{lem}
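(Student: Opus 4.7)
The plan is to reduce each of the three descriptions to a local statement and then invoke Lemma~\ref{lem:totaldisc-log-resol} together with the equality~(\ref{eq:totaldiscrep-resol}). Since being strongly canonical, Kawamata log terminal, or log canonical is local on $X$, the description of the non-sc, non-klt and non-lc loci amounts to characterizing, for a given point $x \in X$, whether some open neighborhood $U$ of $x$ makes $(U, D|_{U})$ sc (resp.\ klt, lc). I would begin by observing that for any open $U \subset X$, the restriction $f^{-1}(U) \to U$ is again a log resolution of $(U, D|_{U})$, and that $K_{f^{-1}(U)/(U, D|_U)}$ is obtained by restricting $K_{Y/(X,D)}$ to $f^{-1}(U)$; in particular the coefficients along the prime divisors are preserved, so that $a(F \cap f^{-1}(U);\, U, D|_U) = a(F;X,D)$ for every prime divisor $F$ of $Y$ meeting $f^{-1}(U)$.

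Next, for such an open $U$, I would apply~(\ref{eq:totaldiscrep-resol}) to convert the total discrepancy of $(U, D|_{U})$ into that of $(f^{-1}(U),\, -K_{f^{-1}(U)/(U, D|_U)})$. Because $f$ is a log resolution, $f^{-1}(U)$ is smooth and the support of $K_{f^{-1}(U)/(U, D|_U)}$ sits inside a simple normal crossing divisor, so Lemma~\ref{lem:totaldisc-log-resol} applies directly. It yields that $(U, D|_U)$ is sc (resp.\ klt, lc) if and only if every prime divisor $F$ of $Y$ meeting $f^{-1}(U)$ satisfies $a(F;X,D) \ge 0$ (resp.\ $> -1$, $\ge -1$). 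Equivalently, no prime divisor $F$ with $a(F;X,D) < 0$ (resp.\ $\le -1$, $< -1$) meets $f^{-1}(U)$, i.e.\ $U \cap f(F) = \emptyset$ for every such $F$.

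Finally, to pass from the existence of such a neighborhood $U$ to a closed-set description, I would note that only finitely many prime divisors $F$ of $Y$ have $a(F;X,D) \neq 0$, since these are exactly the components of the support of the $\QQ$-divisor $K_{Y/(X,D)}$. Consequently each of the three unions $\bigcup_{F:a(F;X,D)<0} f(F)$, $\bigcup_{F:a(F;X,D)\le -1} f(F)$, $\bigcup_{F:a(F;X,D)<-1} f(F)$ is a finite union of closed subsets of $X$ (using that $f$ is proper), hence itself closed. Therefore a neighborhood $U$ of $x$ disjoint from such a union exists precisely when $x$ does not lie in the union, and the three claimed equalities follow.

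The main technical point is the bookkeeping in the second paragraph: verifying that restricting a log resolution to an open, and restricting the relative canonical divisor accordingly, produces the corresponding data for the restricted log pair so that Lemma~\ref{lem:totaldisc-log-resol} applies verbatim to $(f^{-1}(U),\, -K_{f^{-1}(U)/(U, D|_U)})$. Once this is in place, the rest is a direct combination of the local character of the three properties, the identity~(\ref{eq:totaldiscrep-resol}), and the finiteness of prime divisors with nonzero discrepancy.
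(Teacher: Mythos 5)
Your proof is correct and takes essentially the same approach as the paper, whose own proof is the single line ``This is a direct consequence of Lemma~\ref{lem:totaldisc-log-resol}.'' You have filled in the details the paper leaves implicit: locality of the three singularity classes, restriction of the log resolution and of $K_{Y/(X,D)}$ to opens, reduction via~(\ref{eq:totaldiscrep-resol}) to Lemma~\ref{lem:totaldisc-log-resol}, and closedness of the unions from properness of $f$ and finiteness of the set of prime divisors with nonzero discrepancy.
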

\begin{proof}
This is a direct consequence of Lemma \ref{lem:totaldisc-log-resol}.
\end{proof}

\section{\label{sec:Multiplier-like-ideals}Multiplier-like ideal sheaves}

In this section, we define two variants of multiplier ideals and study
their basic properties.

For an effective log pair $(X,D)$ with $X$ normal, the \emph{multiplier
ideal sheaf} $\cJ(X,D)$ is usually defined to be $f_{*}\cO_{Y}(\lceil K_{Y/(X,D)}\rceil)$
for a log resolution $f\colon Y\to X$ of $(X,D)$ (see \cite[Def. 9.3.56]{MR2095472}).
Here $\lceil\cdot\rceil$ denotes the round up, while $\lfloor\cdot\rfloor$
used below denotes the round down. When $X$ is not normal, $f_{*}\cO_{Y}(\lceil K_{Y/(X,D)}\rceil)$
is no longer an ideal sheaf. To handle this trouble, we replace $f_{*}$
with $f_{\clubsuit}$ defined as follows:
\begin{defn}
For a proper birational morphism $f\colon Y\to X$ of varieties and
a divisor $E$ of $Y$, we define $f_{\clubsuit}\cO_{Y}(E)$ as the
largest ideal sheaf $\cI\subset\cO_{X}$ such that the ideal pull-back
$f^{-1}\cI$ is contained in $\cO_{Y}(E)$ as an $\cO_{Y}$-submodule
of the sheaf of total quotient rings. 
\end{defn}
We then generalize the multiplier ideal sheaf to the non-normal case
as follows:
\begin{defn}
Let $(X,D)$ be an effective log pair and $f\colon Y\to X$ a log
resolution of it. The \emph{multiplier ideal sheaf }$\cJ(X,D)$ is
defined to be $f_{\clubsuit}\cO_{Y}(\lceil K_{Y/(X,D)}\rceil)$.
\end{defn}
We will define two variants $\cH(X,D)$ and $\cI(X,D)$ of the multiplier
ideal to formulate a generalization of Vojta's conjecture for log
pairs. We first define $\cH(X,D)$. 
\begin{lem}
Let $X$ be a smooth variety, $E$ a (not necessarily effective) $\QQ$-divisor
and $f\colon Y\to X$ a proper birational morphism. Then
\[
f_{*}\cO_{Y}(\left\lfloor K_{Y/X}+f^{*}E\right\rfloor )=\cO_{X}(\left\lfloor E\right\rfloor ).
\]
\end{lem}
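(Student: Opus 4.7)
The plan is to verify the equality of coherent $\cO_X$-submodules of the constant sheaf of rational functions on $X$ by establishing both inclusions on local sections. The two consequences of smoothness of $X$ that I will repeatedly use are that $K_{Y/X}\ge 0$ with support contained in $\Exc(f)$, and that $X$ is $S_2$ (indeed regular), so inequalities of Weil divisors can be verified codimension by codimension.

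For the inclusion $\cO_X(\lfloor E\rfloor)\subseteq f_*\cO_Y(\lfloor K_{Y/X}+f^*E\rfloor)$, I would take a local rational section $s$ of $\cO_X(\lfloor E\rfloor)$, so that $\mathrm{div}_X(s)+\lfloor E\rfloor\ge 0$. Setting $\{E\}:=E-\lfloor E\rfloor\ge 0$, the identity
\[
K_{Y/X}+f^*E+f^*\mathrm{div}_X(s)=K_{Y/X}+f^*\{E\}+f^*\bigl(\lfloor E\rfloor+\mathrm{div}_X(s)\bigr)
\]
presents the left-hand side as a sum of three effective $\QQ$-divisors. Since $f^*\mathrm{div}_X(s)=\mathrm{div}_Y(f^*s)$ is $\ZZ$-integral and hence commutes with $\lfloor\cdot\rfloor$, this yields $\mathrm{div}_Y(f^*s)+\lfloor K_{Y/X}+f^*E\rfloor\ge 0$, i.e.\ $f^*s$ lies in the target sheaf.

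For the reverse inclusion I would take a local section $t$ of $f_*\cO_Y(\lfloor K_{Y/X}+f^*E\rfloor)$ and, by the $S_2$ reduction, verify $\mathrm{div}_X(t)+\lfloor E\rfloor\ge 0$ at every prime divisor $D\subset X$. By Zariski's Main Theorem, the indeterminacy locus of $f^{-1}\colon X\dashrightarrow Y$ has codimension $\ge 2$ in the normal $X$, so $f$ is an isomorphism at the generic point of $D$, producing a strict transform $\widetilde{D}\subset Y$ with $\nu_{\widetilde{D}}=\nu_D$. Because $\widetilde{D}$ is not $f$-exceptional, one has $\nu_{\widetilde{D}}(K_{Y/X})=0$ and $\nu_{\widetilde{D}}(f^*E)=\nu_D(E)$, so the containment of $t$ in the left-hand sheaf specializes at $\widetilde{D}$ to exactly $\nu_D(t)+\lfloor\nu_D(E)\rfloor\ge 0$. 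The only bookkeeping hazard, rather than a serious obstacle, is tracking when a $\ZZ$-divisor can be pulled through $\lfloor\cdot\rfloor$; otherwise the argument is uniform in $Y$ (any normal $Y$ works, with no log resolution needed) and rests entirely on the two consequences of smoothness of $X$ mentioned at the outset.
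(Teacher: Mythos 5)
Your argument is correct, and it follows a genuinely different route from the paper's. You prove both inclusions between coherent subsheaves of $K(X)$ directly: for $\cO_X(\lfloor E\rfloor)\subseteq f_*\cO_Y(\lfloor K_{Y/X}+f^*E\rfloor)$ you decompose $K_{Y/X}+f^*E+\mathrm{div}_Y(f^*s)$ into the three effective pieces $K_{Y/X}$, $f^*\{E\}$, and $f^*(\lfloor E\rfloor+\mathrm{div}_X(s))$, then absorb the $\ZZ$-divisor $\mathrm{div}_Y(f^*s)$ through the floor; for the reverse inclusion you restrict to codimension one in $X$ via strict transforms of prime divisors, using that $f$ is an isomorphism over codimension-one points of the normal base and that $K_{Y/X}$ is supported on $\Exc(f)$. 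The paper instead splits $E=\lfloor E\rfloor+\{E\}$, disposes of the integral part by the projection formula for the line bundle $\cO_X(\lfloor E\rfloor)$, and in the fractional case $\lfloor E\rfloor=0$ shows that $\lfloor K_{Y/X}+f^*\{E\}\rfloor$ is effective and $f$-exceptional, so $f_*\cO_Y$ of it equals $\cO_X$. Both proofs ultimately rest on the same two facts you isolated—$K_{Y/X}\ge 0$ (and exceptional support) because $X$ is smooth, and normality of $X$ for the codimension-one reduction—but the paper's projection-formula route is a bit slicker, while yours is more elementary and, as you note, visibly valid for any normal $Y$ without requiring $Y$ to be a log resolution.
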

\begin{proof}
First suppose that $\lfloor E\rfloor=0$. To show the lemma in this
case, it suffices to show that $\left\lfloor K_{Y/X}+f^{*}E\right\rfloor $
is an effective divisor supported in $\Exc(f)$. Since $K_{Y/X}$
and $E$ are effective, so is $\left\lfloor K_{Y/X}+f^{*}E\right\rfloor $.
On the locus where $f$ is an isomorphism, the two divisors $\left\lfloor K_{Y/X}+f^{*}E\right\rfloor $
and $\lfloor E\rfloor$ coincide, the latter being zero by the assumption.
This proves the lemma in this case.

For the general case, we write $\{E\}:=E-\lfloor E\rfloor$. Obviously,
$\lfloor\{E\}\rfloor=0$. From the projection formula and the case
considered above, we have
\begin{align*}
f_{*}\cO_{Y}(\left\lfloor K_{Y/X}+f^{*}E\right\rfloor ) & =f_{*}\cO_{Y}(\left\lfloor K_{Y/X}+f^{*}\{E\}\right\rfloor +f^{*}\lfloor E\rfloor)\\
 & =f_{*}\left(\cO_{Y}(\left\lfloor K_{Y/X}+f^{*}\{E\}\right\rfloor )\otimes_{\cO_{Y}}f^{*}\cO_{X}(\lfloor E\rfloor)\right)\\
 & =\cO_{X}\otimes_{\cO_{X}}\cO_{X}(\lfloor E\rfloor)\\
 & =\cO_{X}(\lfloor E\rfloor).
\end{align*}
We have completed the proof.\end{proof}
\begin{prop}
Let $(X,D)$ be an effective log pair and $f\colon Y\to X$ a log
resolution of $(X,D)$. Then $f_{\clubsuit}\cO_{Y}\left(\lfloor K_{Y/(X,D)}\rfloor\right)$
is an ideal sheaf contained in $\cJ(X,D)$ and independent of $f$.\end{prop}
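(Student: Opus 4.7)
The proposition makes three assertions about $\mathcal{S}:=f_{\clubsuit}\cO_{Y}(\lfloor K_{Y/(X,D)}\rfloor)$: that it is an ideal sheaf, that $\mathcal{S}\subset\cJ(X,D)$, and that $\mathcal{S}$ is independent of the chosen log resolution $f$. The first two are essentially formal. By the very definition of $f_{\clubsuit}$, the output is an ideal sheaf of $\cO_{X}$, and the sum of any two candidate ideals is again a candidate, so the supremum exists. For the inclusion in $\cJ(X,D)=f_{\clubsuit}\cO_{Y}(\lceil K_{Y/(X,D)}\rceil)$, the inequality $\lfloor K_{Y/(X,D)}\rfloor\le\lceil K_{Y/(X,D)}\rceil$ yields $\cO_{Y}(\lfloor K_{Y/(X,D)}\rfloor)\subset\cO_{Y}(\lceil K_{Y/(X,D)}\rceil)$ inside the sheaf of total quotient rings, so every ideal $\cI\subset\cO_{X}$ with $f^{-1}\cI\subset\cO_{Y}(\lfloor K_{Y/(X,D)}\rfloor)$ also satisfies $f^{-1}\cI\subset\cO_{Y}(\lceil K_{Y/(X,D)}\rceil)$, and passing to the largest such $\cI$ gives $\mathcal{S}\subset\cJ(X,D)$.

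The substance of the proposition is the independence of $f$. My plan is to first handle the comparison case: if $g\colon Y'\to Y$ is a proper birational morphism with $Y,Y'$ smooth, $f\colon Y\to X$ a log resolution of $(X,D)$ and $f\circ g\colon Y'\to X$ another log resolution, then the two ideals coincide. The general case reduces to this by dominating any two log resolutions $f_{1},f_{2}$ by a common one $f_{3}\colon Y_{3}\to X$ with morphisms $g_{i}\colon Y_{3}\to Y_{i}$ satisfying $f_{3}=f_{i}\circ g_{i}$; such a $Y_{3}$ is obtained via Hironaka by first resolving the closure of the diagonal in $Y_{1}\times_{X}Y_{2}$ and then further modifying to achieve the log resolution condition.

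The comparison case rests on two ingredients. The first is a composition formula: for any divisor $E'$ on $Y'$,
\[
(f\circ g)_{\clubsuit}\cO_{Y'}(E')\;=\;f_{\clubsuit}\bigl(g_{*}\cO_{Y'}(E')\bigr),
\]
where $f_{\clubsuit}$ on the right is extended in the obvious way to subsheaves of $\cK_{Y}$ (the largest ideal $\cI\subset\cO_{X}$ whose ideal-pullback $f^{-1}\cI$ is contained in the given subsheaf). This reduces, for each candidate $\cI$, to the tautology that $(fg)^{-1}\cI\subset\cO_{Y'}(E')$ if and only if $f^{-1}\cI\subset g_{*}\cO_{Y'}(E')$: sections of $g_{*}\cO_{Y'}(E')$ over $V\subset Y$ are by definition sections of $\cO_{Y'}(E')$ over $g^{-1}(V)$. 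The second ingredient is the preceding lemma, applied to $g$ with ``$E$'' equal to $K_{Y/(X,D)}$; combined with the additivity $K_{Y'/(X,D)}=K_{Y'/Y}+g^{*}K_{Y/(X,D)}$ (which holds because $Y$ is smooth and $(fg)^{*}=g^{*}f^{*}$), it yields
\[
g_{*}\cO_{Y'}\bigl(\lfloor K_{Y'/(X,D)}\rfloor\bigr)\;=\;\cO_{Y}\bigl(\lfloor K_{Y/(X,D)}\rfloor\bigr).
\]
Feeding this identity into the composition formula gives $(fg)_{\clubsuit}\cO_{Y'}(\lfloor K_{Y'/(X,D)}\rfloor)=f_{\clubsuit}\cO_{Y}(\lfloor K_{Y/(X,D)}\rfloor)$, as desired.

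The main obstacle will be setting up the composition formula cleanly. Since $\cO_{Y}(\lfloor K_{Y/(X,D)}\rfloor)$ is in general only a fractional ideal (a subsheaf of $\cK_{Y}$, not of $\cO_{Y}$), one must carefully distinguish $\cO$-module pullback from ideal-theoretic pullback and check that the extension of $f_{\clubsuit}$ to such fractional sheaves is the correct notion to make the formula hold. Once this is in place, the preceding lemma carries all of the geometric content.
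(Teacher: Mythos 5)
Your proposal is correct and follows essentially the same route as the paper: the inclusion $\lfloor K_{Y/(X,D)}\rfloor\le\lceil K_{Y/(X,D)}\rceil$ handles the first assertion, and independence is reduced to the case of a dominating log resolution $f'=f\circ g$, where the preceding lemma (applied to $g\colon Y'\to Y$ with $E=K_{Y/(X,D)}$, using $K_{Y'/(X,D)}=K_{Y'/Y}+g^{*}K_{Y/(X,D)}$) gives $g_{*}\cO_{Y'}(\lfloor K_{Y'/(X,D)}\rfloor)=\cO_{Y}(\lfloor K_{Y/(X,D)}\rfloor)$. The only difference is that you spell out the composition identity $(f\circ g)_{\clubsuit}=f_{\clubsuit}\circ g_{*}$ on fractional ideal sheaves, which the paper uses tacitly.
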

\begin{proof}
Since $\left\lfloor K_{Y/(X,D)}\right\rfloor \le\left\lceil K_{Y/(X,D)}\right\rceil $,
we have 
\[
\cO_{Y}(\lfloor K_{Y/(X,D)}\rfloor)\subset\cO_{Y}(\lceil K_{Y/(X,D)}\rceil).
\]
Applying $f_{\clubsuit}$, we obtain the first assertion.

To show the second assertion, we consider another log resolution $f'\colon Y'\to X$
of $(X,D)$. Without loss of generality, we may suppose that $f'$
factors as $f\circ g$ with a morphism $g\colon Y'\to Y$. Then 
\[
K_{Y'/(X,D)}=K_{Y'/Y}+g^{*}K_{Y/(X,D)}.
\]
From the above lemma, 
\begin{align*}
f'_{\clubsuit}\cO_{Y'}(\lfloor K_{Y'/(X,D)}\rfloor) & =f_{\clubsuit}\left(g_{*}\cO_{Y'}(\lfloor K_{Y'/Y}+g^{*}K_{Y/(X,D)}\rfloor)\right)\\
 & =f_{\clubsuit}\cO_{Y}(\lfloor K_{Y/(X,D)}\rfloor).
\end{align*}
We have proved the second assertion. \end{proof}
\begin{defn}
For an effective log pair $(X,D)$, we define an ideal sheaf $\cH(X,D)$
on $X$ by 
\[
\cH(X,D):=f_{\clubsuit}\cO_{Y}(\lfloor K_{Y/(X,D)}\rfloor)
\]
for a log resolution $f\colon Y\to X$ of $(X,D)$. \end{defn}
\begin{prop}
\label{prop:H-sc}For an effective log pair $(X,D)$ and a point $x\in X$,
we have $\cH(X,D)_{x}=\cO_{X,x}$ if and only if $(X,D)$ is strongly
canonical around $x$. Equivalently, the support of 
\[
\Supp(\cO_{X}/\cH(X,D))=\NonSC(X,D).
\]
\end{prop}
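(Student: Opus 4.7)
The plan is to unwind the definition of $\cH$ through $f_\clubsuit$ and reduce to Lemma \ref{lem:NonSC-NonLC-descrip}. First I would observe that $\cH(X,D)_x = \cO_{X,x}$ is equivalent to the existence of an open neighborhood $U$ of $x$ on which $\cH(X,D)|_U = \cO_X|_U$. By the definition of $f_\clubsuit\cO_Y(\lfloor K_{Y/(X,D)}\rfloor)$ as the \emph{largest} ideal sheaf whose pull-back lies inside $\cO_Y(\lfloor K_{Y/(X,D)}\rfloor)$, this is in turn equivalent to the containment
\[
\cO_{f^{-1}(U)} \subset \cO_Y\bigl(\lfloor K_{Y/(X,D)}\rfloor\bigr)\big|_{f^{-1}(U)}
\]
inside the sheaf of total quotient rings.

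Second, this containment is equivalent to the divisor $\lfloor K_{Y/(X,D)}\rfloor$ being effective on $f^{-1}(U)$. Writing $K_{Y/(X,D)} = \sum_F a(F;X,D)\cdot F$, the coefficient of $\lfloor K_{Y/(X,D)}\rfloor$ at a prime divisor $F$ is $\lfloor a(F;X,D)\rfloor$, and since $\lfloor q\rfloor \ge 0 \Leftrightarrow q \ge 0$ for $q\in\QQ$, effectivity on $f^{-1}(U)$ amounts to the requirement that every prime divisor $F$ of $Y$ meeting $f^{-1}(U)$ satisfy $a(F;X,D) \ge 0$.

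Third, I would pass from ``neighborhoods of $x$'' to ``prime divisors with $x\in f(F)$''. Only finitely many prime divisors of $Y$, namely those contained in $\Exc(f) \cup \bigcup_i f^{-1}(D_i)_{\red}$, carry a nonzero coefficient in $K_{Y/(X,D)}$, and since $f$ is proper each image $f(F)$ is closed in $X$. Hence the existence of a neighborhood $U$ as in the previous paragraph is equivalent to the condition that every prime divisor $F$ of $Y$ with $x \in f(F)$ satisfies $a(F;X,D)\ge 0$: in one direction, take $U := X \setminus \bigcup\{f(F) : a(F;X,D) < 0,\ x \notin f(F)\}$, which is open and contains $x$; in the other, any $F$ with $x \in f(F)$ meets $f^{-1}(U)$ for every open $U \ni x$.

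Finally, by Lemma \ref{lem:NonSC-NonLC-descrip}, the condition ``$a(F;X,D) \ge 0$ for every prime divisor $F$ of $Y$ with $x \in f(F)$'' is exactly the condition $x \notin \NonSC(X,D)$, i.e. $(X,D)$ is strongly canonical around $x$. The equivalent formulation in terms of supports follows at once. The main delicate point is the passage in the third step between a single open neighborhood on $X$ and a condition over all relevant prime divisors of $Y$; finiteness of the support of $K_{Y/(X,D)}$ together with properness of $f$ makes this routine.
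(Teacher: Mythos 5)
Your argument is correct and follows essentially the same route as the paper's own proof: both reduce $\cH(X,D)_x = \cO_{X,x}$, via the definition of $f_\clubsuit$, to local effectivity of $\lfloor K_{Y/(X,D)}\rfloor$ on $f^{-1}(U)$ for some $U\ni x$, and then identify that condition with $x\notin\NonSC(X,D)$ through the log-resolution description of the non-sc locus. The paper states the two implications separately and more tersely ("this remains true even if we replace $X$ with any open neighborhood of $x$"), whereas you spell out the passage between neighborhoods and images $f(F)$ explicitly, but the mathematical content is the same.
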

\begin{proof}
We write $\cH(X,D)$ as $\cH$. We first show the ``if'' part. We
suppose that $(X,D)$ is strongly canonical. Let $f\colon Y\to X$
be a log resolution of $(X,D)$. By the definition of strongly canonical,
$K_{Y/(X,D)}\ge0$ and $\lfloor K_{Y/(X,D)}\rfloor\ge0$. By the definition
of $f_{\clubsuit}$, we have
\[
\cH=f_{\clubsuit}\cO_{Y}(\lfloor K_{Y/(X,D)}\rfloor)=\cO_{X}.
\]

Next we show the ``only if'' part. Suppose that $(X,D)$ is not
strongly canonical around $x$. Then there exists a prime divisor
$F$ on $Y$ such that $x\in f(F)$ and the multiplicity of $F$ in
$K_{Y/(X,D)}$ is negative. Therefore, 
\[
f^{-1}\cO_{X}=\cO_{Y}\not\subset\cO_{Y}(\lfloor K_{Y/(X,D)}\rfloor).
\]
This remains true even if we replace $X$ with any open neighborhood
of $x$. Agin by the definition of $f_{\clubsuit}$, $\cH_{x}\not\ne\cO_{X,x}$. \end{proof}
\begin{cor}
\label{cor:H-def-ideal}Let $(X,D)$ be an effective log pair. If
$(X,D)$ is log canonical, then $\cH(X,D)$ is the defining ideal
sheaf of the closed subset $\NonSC(X,D)$. \end{cor}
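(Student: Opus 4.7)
The plan is to compute $\cH(X,D)$ directly on a log resolution and match it with $\cI_{W}$, the radical ideal sheaf of the reduced subscheme $W:=\NonSC(X,D)$. I fix a log resolution $f\colon Y\to X$ of $(X,D)$ and write
\[
\lfloor K_{Y/(X,D)}\rfloor=P-N,
\]
where $P$ and $N$ are effective $\ZZ$-divisors with disjoint supports. The log canonical hypothesis forces $a(F;X,D)\ge -1$ for every prime divisor $F$ of $Y$, so each integer multiplicity of the negative part is either $0$ or $-1$. Consequently $N$ is \emph{reduced}, and its support consists precisely of the prime divisors $F$ with $a(F;X,D)<0$; by Lemma \ref{lem:NonSC-NonLC-descrip}, $W=f(\Supp N)$.

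My first step is to show $\cH(X,D)=f_{\clubsuit}\cO_{Y}(-N)$. Any ideal sheaf $\cI\subset\cO_{X}$ satisfies $f^{-1}\cI\subset\cO_{Y}$, so the condition $f^{-1}\cI\subset\cO_{Y}(P-N)$ is equivalent to $f^{-1}\cI\subset\cO_{Y}\cap\cO_{Y}(P-N)$, and a short check of orders of vanishing along each prime divisor gives $\cO_{Y}\cap\cO_{Y}(P-N)=\cO_{Y}(-N)$ (the positive part $P$ adds no constraint once one is already in $\cO_{Y}$). Hence the two largest-ideal-sheaf conditions defining $f_{\clubsuit}\cO_{Y}(P-N)$ and $f_{\clubsuit}\cO_{Y}(-N)$ coincide.

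My second step is to identify $f_{\clubsuit}\cO_{Y}(-N)$ with $\cI_{W}$. For the inclusion $\supset$, any local section $g$ of $\cI_{W}$ vanishes on each $f(F)\subset W$ (where $F$ runs over the components of $N$), so $f^{*}g$ vanishes on $F$; this yields $f^{-1}\cI_{W}\subset\cO_{Y}(-N)$, and since $\cI_{W}\subset\cO_{X}$ is an ideal, it is contained in the largest such one, namely $f_{\clubsuit}\cO_{Y}(-N)$. For the inclusion $\subset$, a local section $g$ of $f_{\clubsuit}\cO_{Y}(-N)$ has $f^{*}g\in\cO_{Y}(-N)$, so $f^{*}g$ vanishes at the generic point of each component $F$ of $N$; thus $g$ vanishes at the generic point of $f(F)$ and therefore on all of $\overline{f(F)}=f(F)$, giving $g\in\cI_{W}$.

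The only place the log canonical hypothesis enters is in the reducedness of $N$. Without it, some $a(F;X,D)<-1$ would force $N$ to have multiplicity $\ge 2$ along $F$, and $\cH(X,D)$ would be strictly contained in $\cI_{W}$, hence non-radical. I do not expect any serious obstacle: once the decomposition $\lfloor K_{Y/(X,D)}\rfloor=P-N$ with $N$ reduced is in hand, the remaining steps are formal sheaf-theoretic bookkeeping from the definition of $f_{\clubsuit}$.
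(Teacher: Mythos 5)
Your proof is correct and follows essentially the same argument as the paper: log canonicity forces the negative part $N$ of $\lfloor K_{Y/(X,D)}\rfloor$ to be reduced, and then $f_{\clubsuit}$ produces precisely the ideal of functions vanishing along $f(\Supp N)=\NonSC(X,D)$. The only cosmetic difference is that you establish both inclusions directly via the $P-N$ decomposition, whereas the paper reuses Proposition \ref{prop:H-sc} for the inclusion $\cH(X,D)\subset\cI_{\NonSC(X,D)}$ and computes only the converse.
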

\begin{proof}
Let $\cN$ be the defining ideal of $\NonSC(X,D)$. From Proposition
\ref{prop:H-sc}, we have $\cH\subset\cN$. To see the opposite inclusion,
let $U\subset X$ be an open subset and $g\in\cN(U)$. For a log resolution
$f\colon Y\to X$ of $(X,D)$, $f^{*}g$ vanishes along the closed
set $f^{-1}(\NonSC(X,D))$. The last set contains every prime divisor
$F$ on $Y$ having a negative coefficient in $\lfloor K_{Y/(X,D)}\rfloor$,
which is equal to $-1$ since $(X,D)$ is log canonical. Therefore,
$f^{*}g\in\cO_{Y}(\lfloor K_{Y/(X,D)}\rfloor)(f^{-1}U)$ and hence
$g\in\cH(U)$. Thus $\cN\subset\cH$, proving the corollary. 
\end{proof}
Next we will define the other variant, denoted by $\cI(X,D)$, of
the multiplier ideal.
\begin{lem}
Let $(X,D)$ be an effective log pair. Suppose that $D$ is effective.
There exists a positive rational number $\epsilon_{0}$ such that
for every $\epsilon\in(0,\epsilon_{0}]\cap\QQ$, 
\[
\cJ(X,(1-\epsilon)D)=\cJ(X,(1-\epsilon_{0})D).
\]
\end{lem}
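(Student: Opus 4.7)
The plan is to fix a single log resolution $f\colon Y\to X$ of $(X,D)$, observe that this same $f$ serves as a log resolution of $(X,(1-\epsilon)D)$ for every $\epsilon\in\QQ$, and show that the integral divisor $\lceil K_{Y/(X,(1-\epsilon)D)}\rceil$ is constant for all sufficiently small $\epsilon>0$. Applying $f_{\clubsuit}$ then yields the claim.

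First I would decompose. Writing $D=\sum_{i=1}^{n}c_{i}D_{i}$ with $c_{i}\ge 0$, the pull-back $f^{*}D=\sum_{i}c_{i}f^{-1}D_{i}$ is an effective $\QQ$-divisor supported on the SNC divisor $\Exc(f)\cup\bigcup_{i}f^{-1}(D_{i})_{\red}$. Hence
\[
K_{Y/(X,(1-\epsilon)D)}=K_{Y/X}-f^{*}D+\epsilon\, f^{*}D.
\]
Let $F$ range over the (finitely many) prime divisors on $Y$ supporting $K_{Y/X}-f^{*}D$ or $f^{*}D$, and write $a_{F}$ and $b_{F}\ge 0$ for the multiplicities of $F$ in $K_{Y/X}-f^{*}D$ and $f^{*}D$ respectively, so that $K_{Y/(X,(1-\epsilon)D)}=\sum_{F}(a_{F}+\epsilon b_{F})F$.

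Next I would analyze $\lceil a_{F}+\epsilon b_{F}\rceil$ prime by prime. If $b_{F}=0$, this number is just $\lceil a_{F}\rceil$, independent of $\epsilon$. If $b_{F}>0$, then the set of $\epsilon>0$ for which $a_{F}+\epsilon b_{F}\in\ZZ$ is a discrete subset of $\RR_{>0}$, so there is a largest $\epsilon_{F}>0$ such that on $(0,\epsilon_{F}]$ the value $a_{F}+\epsilon b_{F}$ avoids $\ZZ$ except possibly at the right endpoint; on this interval the ceiling $\lceil a_{F}+\epsilon b_{F}\rceil$ takes the constant value $\lceil a_{F}\rceil$ if $a_{F}\notin\ZZ$ and $a_{F}+1$ if $a_{F}\in\ZZ$. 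Setting $\epsilon_{0}:=\min_{F}\epsilon_{F}$ (a positive rational, since only finitely many $F$ contribute), the divisor $\lceil K_{Y/(X,(1-\epsilon)D)}\rceil$ is independent of $\epsilon\in(0,\epsilon_{0}]\cap\QQ$.

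Finally, since the multiplier ideal is defined via $f_{\clubsuit}$ applied to $\cO_{Y}$ of this ceiling divisor, we conclude
\[
\cJ(X,(1-\epsilon)D)=f_{\clubsuit}\cO_{Y}\bigl(\lceil K_{Y/(X,(1-\epsilon)D)}\rceil\bigr)=\cJ(X,(1-\epsilon_{0})D)
\]
for all $\epsilon\in(0,\epsilon_{0}]\cap\QQ$. The only subtle point is the handling of $b_{F}>0$ with $a_{F}\in\ZZ$, where the ceiling jumps at $\epsilon=0$ itself; this is precisely why the equality is asserted for $(0,\epsilon_{0}]$ rather than $[0,\epsilon_{0}]$, and is not an obstacle but simply the reason the statement is formulated with the open left endpoint. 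No further ingredients are needed beyond the fact that $f$ remains a log resolution for every scaling of $D$ and the finiteness of the set of relevant prime divisors on $Y$.
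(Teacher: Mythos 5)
Your proof is correct and follows essentially the same route as the paper: fix one log resolution $f\colon Y\to X$, write $K_{Y/(X,(1-\epsilon)D)}=K_{Y/(X,D)}+\epsilon f^{*}D$, and use finiteness of the support to choose $\epsilon_{0}$ so that the round-up divisor $\lceil K_{Y/(X,(1-\epsilon)D)}\rceil$ is constant on $(0,\epsilon_{0}]$. Your prime-by-prime treatment of the case $a_{F}\in\ZZ$, $b_{F}>0$ is a bit more explicit than the paper's one-line choice of $\epsilon_{0}$, but the underlying argument is identical.
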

\begin{proof}
Let $f\colon Y\to X$ be a log resolution of $(X,D)$. For a rational
number $\epsilon>0$, 
\[
K_{Y/(X,(1-\epsilon)D)}=K_{Y/(X,D)}+\epsilon f^{*}D.
\]
We choose so small $\epsilon_{0}>0$ that every coefficient of $\epsilon_{0}f^{*}D$
is smaller than the fractional par $\{x\}:=x-\lfloor x\rfloor$ of
any non-integral coefficient $x$ of $K_{Y/(X,D)}$. Then, for every
$\epsilon\in(0,\epsilon_{0}]$, 
\[
\left\lceil K_{Y/(X,(1-\epsilon)D)}\right\rceil =\left\lceil K_{Y/(X,(1-\epsilon_{0})D)}\right\rceil ,
\]
which shows the lemma. \end{proof}
\begin{defn}
For an effective log pair $(X,D)$, we define an ideal sheaf $\cI(X,D)$
on $X$ as the multiplier ideal sheaf $\cJ(X,(1-\epsilon)D)$ for
a sufficiently small rational number $\epsilon>0$. \end{defn}
\begin{rem}
\label{rem:compare-ideals}Let $\fa\subset\cO_{X}$ be the defining
ideal sheaf of $D$. When $X$ is smooth, then the ideal $\cI(X,D)$
was denoted by $\cJ^{-}(\fa)$ in \cite{MR2867936} and Vojta used
it in a generalization of his own conjecture. 
\end{rem}
Multiplier-like ideal sheaves which we saw above satisfy the following
inclusion relations,
\[
\cH(X,D)\subset\cJ(X,D)\subset\cI(X,D).
\]

\begin{prop}[{cf. \cite[Def. 9.3.9]{MR2095472}}]
\label{prop:non-lc-locus}Let $(X,D)$ be an effective log pair.
Then
\begin{equation}
\NonLC(X,D)\subset\Supp(\cO_{X}/\cI(X,D))\subset\NonKLT(X,D).\label{eq:lc-klt}
\end{equation}
Moreover, if $X$ is log terminal outside $\Supp(D)$, then 
\begin{equation}
\NonLC(X,D)=\Supp(\cO_{X}/\cI(X,D)).\label{eq:lc}
\end{equation}
\end{prop}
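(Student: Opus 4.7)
The plan is to translate everything into conditions on coefficients of prime divisors of a log resolution $f\colon Y \to X$ of $(X,D)$, since each of the three closed subsets in the statement admits such a description: via Lemma \ref{lem:NonSC-NonLC-descrip} for $\NonLC(X,D)$ and $\NonKLT(X,D)$, and via an analogue of Proposition \ref{prop:H-sc} for $\Supp(\cO_X/\cI(X,D))$. First I would fix $\epsilon \in \QQ_{>0}$ small enough that $\cI(X,D) = f_\clubsuit\cO_Y(\lceil K_{Y/(X,(1-\epsilon)D)} \rceil)$, and record the identity
\[
a(F;X,(1-\epsilon)D) = a(F;X,D) + \epsilon\cdot\mult_F(f^*D)
\]
for every prime divisor $F$ on $Y$. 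Repeating the argument of Proposition \ref{prop:H-sc} with $\lceil\cdot\rceil$ in place of $\lfloor\cdot\rfloor$ then yields the characterization: $x \in \Supp(\cO_X/\cI(X,D))$ if and only if there exists a prime divisor $F$ on $Y$ with $x \in f(F)$ and $a(F;X,(1-\epsilon)D) \le -1$.

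Granted this characterization, both inclusions in (\ref{eq:lc-klt}) follow directly from Lemma \ref{lem:NonSC-NonLC-descrip}. For $\NonLC(X,D) \subset \Supp(\cO_X/\cI(X,D))$, given $x \in \NonLC(X,D)$ choose an $F$ with $x \in f(F)$ and $a(F;X,D) < -1$; since only finitely many prime divisors of $Y$ have $a(F;X,D) \ne 0$ (they must lie in $\Exc(f) \cup \Supp(f^*D)$), we may shrink $\epsilon$ so that $a(F;X,(1-\epsilon)D) < -1$ for every such $F$, forcing $x \in \Supp(\cO_X/\cI(X,D))$. For the other inclusion, non-negativity of $\epsilon\cdot\mult_F(f^*D)$ gives $a(F;X,D) \le a(F;X,(1-\epsilon)D) \le -1$, whence $x \in \NonKLT(X,D)$.

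For the moreover part, assume $X$ is log terminal outside $\Supp(D)$ and take $x \in \Supp(\cO_X/\cI(X,D))$ with corresponding $F$. The argument is a case analysis according to whether $F$ is a component of $\Supp(f^*D)$. If $\mult_F(f^*D) > 0$, then $a(F;X,D) = a(F;X,(1-\epsilon)D) - \epsilon\cdot\mult_F(f^*D) < -1$, so $x \in \NonLC(X,D)$. Otherwise $\mult_F(f^*D) = 0$, which implies $f(F) \not\subset \Supp(D)$; picking $y \in f(F) \setminus \Supp(D)$ and applying the log-terminal hypothesis at $y$ gives $a(F;X) > -1$, but $a(F;X) = a(F;X,D)$ in this case, contradicting $a(F;X,D) \le a(F;X,(1-\epsilon)D) \le -1$. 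Thus this case is vacuous, and (\ref{eq:lc}) follows. The main subtlety I foresee is the final case analysis, particularly converting the hypothesis ``log terminal outside $\Supp(D)$'' into the discrepancy bound $a(F;X) > -1$ for prime divisors $F$ with $f(F) \not\subset \Supp(D)$.
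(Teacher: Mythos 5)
Your proposal is correct and follows essentially the same route the paper takes: translate $\NonLC$, $\NonKLT$, and $\Supp(\cO_X/\cI)$ into coefficient conditions on a fixed log resolution (using $K_{Y/(X,(1-\epsilon)D)} = K_{Y/(X,D)} + \epsilon f^*D$), deduce the two inclusions immediately, and handle the moreover part by splitting on whether $F \subset \Supp(f^*D)$ and invoking the log-terminal hypothesis in the complementary case. The paper states that last step as the contrapositive of yours (showing $a(F;X,D)\ge -1 \Rightarrow \mult_F(\lceil K_{Y/(X,D)}+\epsilon f^*D\rceil)\ge 0$), but the content is identical.
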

\begin{proof}
Let $f\colon Y\to X$ be a log resolution of $(X,D)$ and $\epsilon>0$
a sufficiently small rational number. The closed subset defined by
$\cI(X,D)$ is expressed as 
\[
\bigcup_{F\subset Y:\mult_{F}(\lceil K_{Y/(X,D)}+\epsilon f^{*}D\rceil)<0}f(F),
\]
where $\mult_{F}(E)$ denotes the multiplicity of $F$ in $E$, while
$\NonKLT(X,D)$ and $\NonLC(X,D)$ have similar expressions as in
Lemma \ref{lem:NonSC-NonLC-descrip}. We have the following implications
among conditions on a prime divisor $F$ of $Y$,
\[
a(F;X,D)<-1\Rightarrow\mult_{F}(\lceil K_{Y/(X,D)}+\epsilon f^{*}D\rceil)<0\Rightarrow a(F;X,D)\le-1.
\]
This shows (\ref{eq:lc-klt}).

To show (\ref{eq:lc}), it suffices to show 
\[
a(F;X,D)\ge-1\Rightarrow\mult_{F}(\lceil K_{Y/(X,D)}+\epsilon f^{*}D\rceil)\ge0.
\]
We consider the case $a(F;X,D)>-1$ and the case $a(F;X,D)=-1$ separately.
In the former, we obviously have $\mult_{F}(\lceil K_{Y/(X,D)}+\epsilon f^{*}D\rceil)\ge0$.
In the latter, since $X$ is log terminal outside $\Supp(D)$, $F$
is contained in $\Supp(f^{*}D)$. Hence $\mult_{F}(K_{Y/(X,D)}+\epsilon f^{*}D)>-1$
and $\mult_{F}(\left\lceil K_{Y/(X,D)}+\epsilon f^{*}D\right\rceil )\ge0$.
This completes the proof. 
\end{proof}

\section{\label{sec:Weil functions}Weil functions}

In this section, we summerize basic properties of Weil functions (local
height functions) of arbitrary closed subschemes, and define associated
heigh functions, counting functions and proximity functions. 

We denote by $M_{k}$ the set of places of $k$. In what follows,
we fix a finite set $S\subset M_{k}$ containing all infinite places.
We also fix an algebraic closure $\bar{k}$ of $k$. 

Let $X$ be a projective variety. To an ideal sheaf $\fa\subset\cO_{X}$,
we associate a Weil function
\[
\lambda_{\fa}\colon X(\bar{k})\times M_{k}\to[0,+\infty],
\]
following \cite{MR919501}, which is unique up to addition of $M_{k}$-bounded
functions. If $Z$ is the closed subscheme defined by an ideal sheaf
$\fa$, then we write $\lambda_{\fa}$ also as $\lambda_{Z}$. If
$Z$ is a Cartier divisor (that is, $\fa$ is locally principal),
then it is the usual Weil function for an effective Cartier divisor. 

There are several ways to normalize Weil functions. We follow the
one adopted in \cite[Def. 8.6]{MR2757629}. Namely, for $v\in M_{k}$,
if $p$ is the place of $\QQ$ under $v$, then we denote by $\Vert\cdot\Vert_{v}$
the norm on $\bar{k}$ extending the one on $k$ defined by 
\[
\Vert a\Vert_{v}:=|\cdot|_{p}^{[k_{v}:\QQ_{p}]},
\]
where $|\cdot|_{p}$ denotes the usual $p$-adic absolute value and
$k_{v}$ denotes the $v$-adic completion of $k$. When a Cartier
divisor $D$ is locally defined by a rational function $f$, then
a Weil function $\lambda_{D}$ of $D$ should be locally of the form
\[
\lambda_{D}(x,v)=-\log\Vert f(x)\Vert_{v}+\alpha(x)
\]
for a continuous locally $M_{k}$-bounded function $\alpha$. 

Basic properties of Weil functions are as follows; in this proposition,
comparison (equality or inequality) of Weil functions are made up
to addition of $M_{k}$-bounded functions:
\begin{prop}[{\cite[Th. 2.1]{MR919501}}]
\label{prop:properties-Weil-fn}
\begin{enumerate}
\item For a morphism $f\colon Y\to X$ of varieties and a closed subvariety,
we have
\[
\lambda_{Z}\circ f=\lambda_{f^{-1}Z}.
\]

\item For $Z\subset Z'\subset X$, 
\[
\lambda_{Z}\le\lambda_{Z'}.
\]

\item For closed subvarieties $Z,Z'\subset X$,
\[
\lambda_{Z+Z'}=\lambda_{Z}+\lambda_{Z'}.
\]
Here, if $Z$ and $Z'$ are defined by ideal sheaves $\fa$ and $\fa'$
respectively, then $Z+Z'$ is the closed subscheme defined by the
product $\fa\fa'$.
\item \label{enu:min-lambda}For closed subvarieties $Z,Z'\subset X$,
\[
\lambda_{Z\cap Z'}=\min\{\lambda_{Z}+\lambda_{Z'}\}.
\]
Here, if $Z$ and $Z'$ are defined by ideal sheaves $\fa$ and $\fa'$
respectively, then $Z\cap Z'$ is the closed subscheme defined by
the sum $\fa+\fa'$.
\end{enumerate}
\end{prop}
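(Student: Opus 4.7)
The plan is to first pin down the construction of $\lambda_{\fa}$ for an arbitrary ideal sheaf $\fa \subset \cO_X$ and then derive the four properties from the Cartier divisor case, which is classical. Concretely, I would cover $X$ by finitely many affine opens $U_\alpha$ on each of which $\fa$ is generated by regular functions $f_{\alpha,1},\dots,f_{\alpha,r_\alpha}$, and set
\[
\lambda_{\fa}(x,v) := -\log \min_{\alpha:\, x \in U_\alpha(\bar{k}_v)} \max_{i} \Vert f_{\alpha,i}(x)\Vert_v
\]
(or the equivalent patched definition using a partition-of-unity style argument). The first task is to show this is well-defined up to $M_k$-bounded functions, independent of the choice of cover and generators. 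This is the standard argument: if $\{g_j\}$ is another set of local generators on $U_\alpha$, then each $g_j = \sum h_{ji} f_{\alpha,i}$ and vice versa with regular coefficients $h_{ji}, h'_{ij}$ on an affine open, so $\max_j \Vert g_j(x)\Vert_v$ and $\max_i \Vert f_{\alpha,i}(x)\Vert_v$ differ by a factor bounded locally in $v$.

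Granting well-definedness, the four properties follow quickly. Property (1) (functoriality) is immediate because $f^{-1}\fa$ is locally generated by the pullbacks $f_{\alpha,i}\circ f$, and the formula for $\lambda$ is manifestly functorial. Property (3) (additivity under product of ideals) reduces to the identity
\[
\max_{i,j} \Vert f_i(x) g_j(x)\Vert_v \;=\; \bigl(\max_i \Vert f_i(x)\Vert_v\bigr)\bigl(\max_j \Vert g_j(x)\Vert_v\bigr),
\]
since $\fa \fa'$ is locally generated by the products $f_i g_j$. Property (4) (the $\min$ formula, which should read $\lambda_{Z \cap Z'} = \min\{\lambda_Z,\lambda_{Z'}\}$ up to bounded error) comes from the fact that $\fa + \fa'$ is locally generated by the union of the generating sets, and $\max_{i,j}\{\Vert f_i\Vert_v,\Vert g_j\Vert_v\}$ equals $\max\{\max_i\Vert f_i\Vert_v, \max_j\Vert g_j\Vert_v\}$, whose $-\log$ is the minimum of the two Weil functions.

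Property (2) (monotonicity) is the step I expect to be the most delicate. If $Z \subset Z'$, meaning $\fa' \subset \fa$ where $\fa,\fa'$ define $Z,Z'$, then locally every generator $g_j$ of $\fa'$ is a regular combination $g_j = \sum h_{ji} f_i$ of generators of $\fa$. On a fixed affine open the coefficients $h_{ji}$ are regular functions, hence bounded in $v$-adic norm on bounded subsets; but some care is needed to patch these local bounds into a global $M_k$-bounded statement. The standard trick is to use quasi-compactness of $X$ together with the continuity and $M_k$-boundedness of the auxiliary functions $h_{ji}$ on a finite cover, yielding $\max_j \Vert g_j(x)\Vert_v \le C_v \cdot \max_i \Vert f_i(x)\Vert_v$ with $(C_v)_v$ an $M_k$-constant. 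Taking $-\log$ gives $\lambda_{\fa'}(x,v) \ge \lambda_{\fa}(x,v) + O_{M_k}(1)$, which is exactly property (2). As an alternative, one can bypass the direct construction entirely: reduce to the Cartier divisor case by blowing up $X$ along $\fa$ and $\fa'$ (resp. their product, resp. their sum) and invoking property (1) together with the well-known theory of Weil functions for Cartier divisors, which in fact is the route Silverman takes in the cited theorem.
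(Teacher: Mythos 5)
The paper offers no proof of this proposition; it is cited directly to Silverman's Theorem 2.1 in \cite{MR919501}. So there is no ``paper's own proof'' to compare against. What you have done is a correct reconstruction of the approach Silverman actually takes there: define the Weil function of an ideal sheaf locally via generators, verify that the result is well-defined up to an $M_k$-bounded function, and then deduce the four listed properties from elementary identities between the $v$-adic norms of local generators. Your identification of the typo in item (\ref{enu:min-lambda}) is also correct: the displayed formula in the paper should read $\lambda_{Z\cap Z'}=\min\{\lambda_Z,\lambda_{Z'}\}$, not $\min\{\lambda_Z+\lambda_{Z'}\}$, exactly as you say.

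Two small points worth flagging. First, the displayed formula $\lambda_{\fa}(x,v) := -\log \min_{\alpha} \max_{i} \Vert f_{\alpha,i}(x)\Vert_v$ is not quite usable as written, because regular functions on an affine open of a projective variety are unbounded, so the candidate is not $M_k$-bounded from below near the boundary of the chart; the correct construction restricts each chart formula to an $M_k$-bounded subset and patches, which you allude to with the parenthetical but should be made explicit if writing this out. Second, in the monotonicity argument it is important that the coefficients $h_{ji}$ in $g_j=\sum_i h_{ji}f_i$ are continuous and locally $M_k$-bounded on those bounded subsets of a finite cover -- quasi-compactness of $X$ is indeed what makes the resulting constant an $M_k$-constant, as you note. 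Both caveats are addressed in Silverman's paper, and your alternative route (reduce to Cartier divisors by blowing up along the ideal, then invoke functoriality) is also the one he uses for part of the argument, so the approach is sound.
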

For later use, we need the following explicit description of a Weil
function in a special case:
\begin{prop}[{cf. \cite[Th. 8.8, (c)]{MR2757629}, \cite[Ex. B.8.4]{MR1745599}}]
\label{prop:normalization-Weil-fn}Let $X=\PP_{k}^{n}$ be a projective
space of dimension $n$ with homogeneous coordinates $x_{0},\dots,x_{n}$
and $D$ the Cartier divisor defined by a homogeneous polynomial $f\in k[x_{0},\dots,x_{n}]$
of degree $d$. Then the function 
\[
\lambda_{D_{i}}((x_{0}:\cdots:x_{n}),v):=-\log\frac{\Vert f(x_{0},\dots,x_{n})\Vert_{v}}{\max\{\Vert x_{0}\Vert_{v},\dots,\Vert x_{n}\Vert_{v}\}^{d}}
\]
is a Weil function with respect to $D$. 
\end{prop}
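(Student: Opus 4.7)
The plan is to verify directly that the proposed function satisfies the local characterization of a Weil function for a Cartier divisor that was recalled just before the proposition, namely that in any local trivialization of $\cO(D)$ in which $D$ is cut out by a rational function $f$, the Weil function equals $-\log\Vert f(x)\Vert_{v}+\alpha(x)$ for some continuous locally $M_{k}$-bounded $\alpha$.

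First I would check that the formula is well-defined as a function on $\PP^{n}(\bar{k})\times M_{k}$, i.e.\ that replacing $(x_{0},\dots,x_{n})$ by $(\lambda x_{0},\dots,\lambda x_{n})$ for $\lambda\in\bar{k}^{\times}$ leaves the expression invariant. This is immediate: the numerator $\Vert f(\lambda x_{0},\dots,\lambda x_{n})\Vert_{v}=\Vert\lambda\Vert_{v}^{d}\,\Vert f(x_{0},\dots,x_{n})\Vert_{v}$ and the denominator $\max_{i}\Vert\lambda x_{i}\Vert_{v}^{d}=\Vert\lambda\Vert_{v}^{d}\max_{i}\Vert x_{i}\Vert_{v}^{d}$ carry the same factor $\Vert\lambda\Vert_{v}^{d}$.

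Next I would work on a standard affine chart, say $U_{0}=\{x_{0}\ne 0\}\cong\AA^{n}$ with coordinates $y_{i}=x_{i}/x_{0}$. On $U_{0}$, the divisor $D$ is cut out by the dehomogenization $g(y_{1},\dots,y_{n}):=f(1,y_{1},\dots,y_{n})=f(x_{0},\dots,x_{n})/x_{0}^{d}$. A direct substitution gives
\[
-\log\frac{\Vert f(x_{0},\dots,x_{n})\Vert_{v}}{\max\{\Vert x_{0}\Vert_{v},\dots,\Vert x_{n}\Vert_{v}\}^{d}}=-\log\Vert g(y_{1},\dots,y_{n})\Vert_{v}+d\log\max\{1,\Vert y_{1}\Vert_{v},\dots,\Vert y_{n}\Vert_{v}\},
\]
so on $U_{0}$ the proposed function equals $-\log\Vert g\Vert_{v}+\alpha_{0}$ with $\alpha_{0}(y,v):=d\log\max\{1,\Vert y_{1}\Vert_{v},\dots,\Vert y_{n}\Vert_{v}\}$. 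The function $\alpha_{0}$ is continuous in $y$ for every $v$, and it is $M_{k}$-bounded on any $M_{k}$-bounded subset of $U_{0}(\bar{k})$ since a bound on the $\Vert y_{i}\Vert_{v}$ translates into a bound on $\alpha_{0}$; hence $\alpha_{0}$ is locally $M_{k}$-bounded in the sense required. The same argument applies to every chart $U_{i}=\{x_{i}\ne 0\}$ by symmetry.

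Since being a Weil function for $D$ is a local condition on $X(\bar{k})$ that is stable under addition of continuous locally $M_{k}$-bounded functions, the chart-by-chart verification glues to give the result. The only mildly delicate point is the locally $M_{k}$-bounded property of $\alpha_{0}$; this is standard and amounts to the elementary observation that $\log\max\{1,\Vert y_{1}\Vert_{v},\dots,\Vert y_{n}\Vert_{v}\}$ defines (up to a sign) the standard Weil function of the hyperplane at infinity on $\PP^{n}$, hence is locally $M_{k}$-bounded by construction.
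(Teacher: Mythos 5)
Your proof is correct. The paper offers no argument for this proposition — it is stated with a citation to standard references (Vojta's notes and Hindry--Silverman) — so there is no ``paper route'' to compare against. Your direct verification matches exactly what those references do: check well-definedness under scaling, compute on the affine chart $U_{i}$ to exhibit the expression as $-\log\Vert g\Vert_{v}$ plus the term $d\log\max\{1,\Vert y_{1}\Vert_{v},\dots,\Vert y_{n}\Vert_{v}\}$, and observe that the correction term is continuous and locally $M_{k}$-bounded, so it fits the local characterization of Weil functions recalled just before the proposition. The one point you flag as ``mildly delicate'' — that $\log\max\{1,\Vert y_{1}\Vert_{v},\dots,\Vert y_{n}\Vert_{v}\}$ is locally $M_{k}$-bounded — is indeed the only place any care is required, and your identification of it with (a multiple of) the standard Weil function for the hyperplane at infinity is the right way to see it. No gaps.
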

For $v\in M_{k}$ and $x\in X(\bar{k})$, we write 
\[
\lambda_{\fa,v}(x):=\lambda_{\fa}(x,v).
\]
For a finite extension $L/k$ and a place $w$ of $L$, if $v\in M_{k}$
is the place under $w$, then we define 
\[
\lambda_{\fa,w}(x):=[L_{w}:k_{v}]\cdot\lambda_{\fa,v}(x).
\]

\begin{defn}
We define the \emph{height function} $h_{\fa}$, the \emph{counting
function} $N_{\fa}$ and the \emph{proximity function} $m_{\fa}$
on $X(\bar{k})$ relative to $\lambda$ and $k$ as follows. For $x\in X(\bar{k})$,
we denote by $k(x)$ its residue field. Let $L/k$ be a finite extension
containing $k(x)$ and let $T\subset M_{L}$ be the set of places
over places in $S$. Then,
\begin{gather*}
h_{\fa}(x):=\frac{1}{[L:k]}\sum_{w\in M_{L}}\lambda_{\fa,w}(x),\\
N_{\fa}(x):=\frac{1}{[L:k]}\sum_{w\in M_{L}\setminus T}\lambda_{\fa,w}(x),\\
m_{\fa}(x):=\frac{1}{[L:k]}\sum_{w\in T}\lambda_{\fa,w}(x).
\end{gather*}
When $Z$ is the closed subscheme defined by $\fa$, we write these
also as $h_{Z},N_{Z},m_{Z}$ respectively. 
\end{defn}
These definitions are independent of the choice of such a field $L$.
Obviously $h_{\fa}=N_{\fa}+m_{\fa}$. Note that counting and proximity
functions depend on the choice of $S$, and their symbols are often
accompanied with the subscript $S$, which we however omit. 

That $\lambda_{Z,w}(x)$ is large means that the point $x$ is $w$-adically
close to $Z$. Thus the function $h_{Z}$ (resp. $N_{Z}$, $m_{Z}$)
expresses total closeness all over the places $w$ in $M_{L}$ (resp.
$M_{L}\setminus T$, $T$). 
\begin{defn}
If $Z=\sum_{i}c_{i}Z_{i}$ is a $\QQ$-subscheme, then we define 
\[
h_{Z}:=\sum_{i}c_{i}h_{Z_{i}},\,N_{Z}:=\sum_{i}c_{i}N_{Z_{i}},\,m_{Z}:=\sum_{i}c_{i}m_{Z_{i}}
\]
as functions on $(X\setminus\Supp(Z))(\bar{k})$. 
\end{defn}
For a (not necessarily effective) Cartier divisor $D$, the height
function $h_{D}$ on $(X\setminus\Supp(D))(\bar{k})$ defined as above
extends to the whole set $X(\bar{k})$ and defines a unique function
$h_{D}$ up to addition of bounded functions. The function class $h_{D}$
modulo bounded functions depends only on the linear equivalence class
of $D$. Furthermore, we can easily generalize this to $\QQ$-Cartier
$\QQ$-Weil divisors; if $D$ is a $\QQ$-Cartier $\QQ$-Weil divisor
and if $n$ is a positive integer such that $nD$ is Cartier, then
a height functioon $h_{D}$ is defined as $\frac{1}{n}h_{nD}$. In
particular, for a $\QQ$-Gorenstein variety $X$, we can define a
height function $h_{K_{X}}$ of a canonical divisor $K_{X}$. 
\begin{defn}
For an effective projective log pair $(X,D)$, we define $h_{K_{(X,D)}}$
as
\[
h_{K_{(X,D)}}:=h_{K_{X}}+h_{D}\colon X(\bar{k})\to(-\infty,+\infty].
\]

\end{defn}
When $D$ is a $\QQ$-Cartier $\QQ$-Weil divisor, then $h_{K_{(X,D)}}$
is a height function of $K_{X}+D$, which is considered as a ``canonical
divisor'' of $(X,D)$, hence the notation $h_{K_{(X,D)}}$. 

We need one more definition to formulate Vojta's conjecture.
\begin{defn}
For a number field $F$, let $D_{F}\in\ZZ$ denote the absolute discriminant
of $F$. For a finite extension $L/k$, we define its \emph{logarithmic
discriminant $d_{k}(L)$ }by
\[
d_{k}(L):=\frac{1}{[L:k]}\log|D_{L}|-\log|D_{k}|.
\]
For a point $x\in X(\bar{k})$ of a $k$-variety $X$, we define its
\emph{logarithmic discriminant }by
\[
d_{k}(x):=d_{k}(k(x)).
\]

\end{defn}

\section{\label{sec:conjectures}A generalization of Vojta's conjecture to
log pairs}

The original form of Vojta's conjecture for algebraic points is as
follows:
\begin{conjecture}
\label{conj:Vojta}Let $X$ be a smooth complete variety, $A$ a big
divisor of $X$ and $D$ a reduced simple normal crossing divisor
of $X$. Let $r$ be a positive integer and $\epsilon$ a positive
real number. Then there exists a proper closed subset $Z\subset X$
depending only on $X,D,A,\epsilon$ such that for all $x\in(X\setminus Z)(\bar{k})$
with $[k(x):k]\le r$, we have 
\begin{equation}
h_{K_{X}}(x)+m_{D}(x)\le d_{k}(x)+\epsilon h_{A}(x)+O(1).\label{eq:Vojta}
\end{equation}

\end{conjecture}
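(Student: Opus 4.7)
The plan for proving the stated conjecture must begin by acknowledging that this is Vojta's celebrated conjecture, which is open in essentially all cases beyond $\dim X = 1$ and a few very special situations. Any genuine proposal is therefore largely aspirational; the sketch below is the standard toolkit one would assemble, not a path to a proof. The paper itself, as the introduction makes clear, will not attempt the conjecture directly but instead establish equivalence between its generalization to log pairs and Conjecture \ref{conj:Vojta} for an SNC log resolution.

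The first phase would be classical reductions. Using functoriality of Weil functions (Proposition \ref{prop:properties-Weil-fn}) together with the standard invariance of height classes up to $O(1)$ under birational modifications away from $D$, one can replace $(X,D)$ by a convenient model. Ideally one reduces to $X = \PP^n$ with $D$ a hyperplane arrangement, and then attempts to lift from there. To handle algebraic points of bounded degree $r$ rather than only $k$-rational points, one uses a Chevalley--Weil type principle: the logarithmic discriminant $d_k(x)$ on the right-hand side precisely measures the ramification in the extension $k(x)/k$, and appears as the correction that allows bounds proved over $k$ to be transferred to points of bounded degree.

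The technical heart would then follow Vojta's own strategy for Roth's theorem and Faltings's theorem. Assuming a sequence of hypothetical counterexamples with rapidly growing heights, one constructs an auxiliary section of a line bundle on a product $X \times \cdots \times X$ vanishing to prescribed order at the tuple of bad points, controls its index at that tuple, and then derives a contradiction via a product theorem (in the style of Faltings) combined with arithmetic intersection estimates. The slack parameter $\epsilon h_A(x)$ corresponds to the room left by the auxiliary construction.

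The main obstacle requires no special identification: it is the conjecture itself. The statement implies Faltings's theorem, Roth's theorem (in its approximation form), the Bombieri--Lang conjecture for subvarieties of abelian varieties, and under appropriate choices the $abc$-conjecture; a genuinely new Diophantine approximation input, well beyond what Vojta's auxiliary polynomial method currently provides in dimension $\geq 2$, would be needed. For this reason, every subsequent section of the paper proves \emph{reductions} and \emph{equivalences} rather than the unconditional inequality, and the most productive role for the conjecture in the present context is as a hypothesis from which concrete geometric consequences (such as Conjecture \ref{conj:general-lang-intro}) can be deduced.
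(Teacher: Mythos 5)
You have correctly recognized that this is a statement of Vojta's conjecture, not a theorem, and that the paper offers no proof of it; it appears as a hypothesis throughout (most notably in Proposition \ref{prop:Conjectures-equiv}, Proposition \ref{prop:NonSC-height}, and Corollary \ref{cor:singular-Lang-NonSC}), and the only unconditional case invoked is dimension one with $r=1$ (Corollary \ref{cor:Conj-dim-1}). Your summary of the standard auxiliary-polynomial/product-theorem strategy, the role of $d_k(x)$ via Chevalley--Weil, and the assessment that the conjecture is wide open in dimension $\ge 2$ are all accurate and align with how the paper treats the statement.
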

If we set $r=1$ in this conjecture, then $d_{k}(x)$ is always zero
and can be removed from the inequality. The conjecture in this case
is called Vojta's conjecture for rational points. 

Using log pairs and multiplier-like ideals introduced in Section \ref{sec:Multiplier-like-ideals},
we formulate a generalization of this conjecture as follows:
\begin{conjecture}
\label{conj:generalized-Vojta}Let $(X,D)$ be an effective projective
log pair, $A$ a big divisor of $X$. Let $r$ be a positive integer
and $\epsilon$ a positive real number. Then there exists a proper
closed subset $Z\subset X$ depending only on $X,D,A,\epsilon$ such
that for all $x\in(X\setminus Z)(\bar{k})$ with $[k(x):k]\le r$,
we have 
\begin{equation}
h_{K_{(X,D)}}(x)-N_{\cH(X,D)}(x)-m_{\cI(X,D)}(x)\le d_{k}(x)+\epsilon h_{A}(x)+O(1).\label{eq:general-Vojta}
\end{equation}

\end{conjecture}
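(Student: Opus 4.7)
The plan is to establish Conjecture \ref{conj:generalized-Vojta} conditionally, by showing it is equivalent to Conjecture \ref{conj:Vojta} applied to a log resolution of $(X,D)$. Since Conjecture \ref{conj:Vojta} itself is open, this conditional proof is the natural target, and it matches the equivalence announced in the abstract.

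Fix a log resolution $f \colon Y \to X$ of $(X,D)$ and write $K_{Y/(X,D)} = \sum_F a_F \cdot F$. Let $D_Y$ be the reduced SNC divisor $\Exc(f) \cup f^{-1}(\Supp D)_{\red}$, and choose a big divisor $A_Y$ on $Y$ with $h_{A_Y}(y) = h_A(f(y)) + O(1)$ outside a proper closed set (e.g.\ $A_Y = f^*A$, which is big since $f$ is birational). Apply Conjecture \ref{conj:Vojta} to the smooth projective pair $(Y, D_Y)$ with parameters $(r, \epsilon/2, A_Y)$ to obtain a proper closed $Z_Y \subset Y$ such that
$$h_{K_Y}(y) + m_{D_Y}(y) \le d_k(y) + \frac{\epsilon}{2}\, h_{A_Y}(y) + O(1)$$
for all $y \in (Y \setminus Z_Y)(\bar k)$ with $[k(y):k] \le r$. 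Let $Z \subset X$ contain the images of $Z_Y$, $\Exc(f)$, and $\Supp D$. For $x \in (X\setminus Z)(\bar k)$ there is a unique preimage $y \in Y(\bar k)$ with $k(y)=k(x)$, and the identity $K_Y = f^*(K_X+D) + K_{Y/(X,D)}$ together with height functoriality gives $h_{K_Y}(y) = h_{K_{(X,D)}}(x) + h_{K_{Y/(X,D)}}(y) + O(1)$. The inequality of Conjecture \ref{conj:generalized-Vojta} therefore reduces to
$$h_{K_{Y/(X,D)}}(y) + m_{D_Y}(y) \ge -N_{\cH(X,D)}(x) - m_{\cI(X,D)}(x) + O(1). \qquad (\ast)$$

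To prove $(\ast)$, decompose $K_{Y/(X,D)} = K^+ - K^-$ into effective parts with disjoint support. By Lemma \ref{lem:NonSC-NonLC-descrip}, $\Supp K^-$ surjects onto $\NonSC(X,D)$ and its sub-divisor of coefficients strictly less than $-1$ surjects onto $\NonKLT(X,D)$. By the very definition of $f_\clubsuit$, one has $f^{-1}\cH(X,D) \subset \cO_Y(\lfloor K_{Y/(X,D)}\rfloor)$, and an analogous inclusion for $\cI(X,D)$ follows from a log resolution of $(X,(1-\epsilon')D)$ for small $\epsilon' > 0$. Combining these inclusions with the functoriality of Weil functions (Proposition \ref{prop:properties-Weil-fn}(1)) and the minimum formula of Proposition \ref{prop:properties-Weil-fn}(\ref{enu:min-lambda}), one checks termwise, after splitting each place $w$ into $w \in T$ and $w \notin T$, that the non-archimedean contribution of $K^-$ dominates $N_{\cH(X,D)}$, while the $w \in T$ contribution of $K^-$ together with the $\epsilon' f^*D$ correction and $m_{D_Y}$ dominates $m_{\cI(X,D)}$; the positive part $K^+$ contributes only $M_k$-bounded error. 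Substituting $(\ast)$ into the Vojta inequality on $Y$ completes the forward implication. The converse is immediate: applied to an SNC pair $(Y, D_Y)$, one has $\cH(Y, D_Y) = \cO_Y(-D_Y)$ and $\cI(Y, D_Y) = \cO_Y$, so Conjecture \ref{conj:generalized-Vojta} specializes to Conjecture \ref{conj:Vojta}.

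The main obstacle is precisely $(\ast)$, and specifically the passage from the sheaf $\cO_Y(\lfloor K_{Y/(X,D)}\rfloor)$, which lives naturally on $Y$, back to $\cH(X,D)$ on the possibly non-normal base $X$ through the $f_\clubsuit$-construction. The ideal pullback $f^{-1}\cH(X,D)$ is in general \emph{strictly} contained in $\cO_Y(\lfloor K_{Y/(X,D)}\rfloor)$, so Proposition \ref{prop:properties-Weil-fn}(1) supplies only one direction of Weil-function comparison. To extract the reverse direction one must argue that the discrepancy between these two ideal sheaves is supported in $\Exc(f)$ together with the non-normal locus of $X$, so that the induced Weil-function defect is $M_k$-bounded and, after comparing $h_{A_Y}(y)$ with $h_A(x)$, absorbed into the error term $\epsilon h_A(x) + O(1)$. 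This bookkeeping around $f_\clubsuit$ in place of ordinary pushforward, forced by the non-normality of $X$, is the principal technical novelty of the generalization and where the bulk of the work sits.
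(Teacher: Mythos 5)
Your overall strategy matches the paper's: Proposition~\ref{prop:Conjectures-equiv} proves Conjecture~\ref{conj:generalized-Vojta} conditionally by reducing it to Conjecture~\ref{conj:Vojta} on a log resolution $f\colon Y\to X$, using exactly the inclusions $f^{-1}\cH(X,D)\subset\cO_Y(\lfloor K_{Y/(X,D)}\rfloor)$ and $f^{-1}\cI(X,D)\subset\cO_Y(\lceil K_{Y/(X,D)}+\epsilon f^*D\rceil)$ that you invoke. However, there are two real problems with the way you propose to close the argument.

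First, your final paragraph is built on a misunderstanding: you claim that one ``must argue that the discrepancy between these two ideal sheaves is $M_k$-bounded'' in order to ``extract the reverse direction'' of the Weil-function comparison. No reverse direction is needed. Because $N_{\cH(X,D)}$ and $m_{\cI(X,D)}$ appear with minus signs on the left side of (\ref{eq:general-Vojta}), what is required is a \emph{lower} bound for them, and precisely that is supplied by the one-sided inclusions: $f^{-1}\cH\subset\cO_Y(\lfloor K_{Y/(X,D)}\rfloor)$ gives $N_{\cH(X,D)}\circ f\ge N_{-\lfloor K_{Y/(X,D)}\rfloor}$, and likewise for $m_{\cI}$. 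Hence $-N_\cH\circ f\le -N_{-\lfloor K_{Y/(X,D)}\rfloor}$, which is the inequality that chains correctly with Vojta's bound on $Y$. There is no ``main obstacle'' around $f_\clubsuit$; the construction was designed so that only the trivially available inclusion is used.

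Second, your stated direction ``the non-archimedean contribution of $K^-$ dominates $N_{\cH(X,D)}$'' is backwards. The inclusion of ideals implies the Weil function of $\cH$ pulled back to $Y$ dominates that of the effective part of $-\lfloor K_{Y/(X,D)}\rfloor$, not the other way. Related to this: because you applied Conjecture~\ref{conj:Vojta} with the full divisor $D_Y=\Exc(f)\cup f^{-1}(\Supp D)_{\red}$ rather than the tailored reduced divisor $E:=\lceil K_{Y/(X,D)}+\epsilon f^*D\rceil-\lfloor K_{Y/(X,D)}\rfloor$, your inequality $(\ast)$ requires controlling $m_{K^-}$ by $m_{D_Y}$, which fails in general when coefficients of $K^-$ exceed $1$ (the non-log-canonical case). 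The paper's choice of $E$ is exactly what makes the last line of the computation, $h_{-\lfloor K_{Y/(X,D)}\rfloor}-N_{-\lfloor K_{Y/(X,D)}\rfloor}-m_{-\lceil K_{Y/(X,D)}+\epsilon f^*D\rceil}=m_E$, collapse to a proximity function of a reduced SNC divisor. Your termwise sketch never verifies this, and with your choice of $D_Y$ it does not close without a further comparison of $K^-$ with a multiple of $D_Y$ that you do not justify. Your converse direction (recovering Conjecture~\ref{conj:Vojta} on an SNC pair, where $\cH=\cO_Y(-D_Y)$ and $\cI=\cO_Y$) is correct and agrees with the paper's Example~\ref{exa:refined-Silverman}.
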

We can view the left hand side of the above inequality as follows.
The main term is $h_{K_{(X,D)}}(x)$ and the other two terms are correction
terms arising from singularities of $(X,D)$. Indeed, from Proposition
\ref{prop:H-sc}, the term $-N_{\cH(X,D)}(x)$ can be thought of as
contribution of the non-sc locus $\NonSC(X,D)$. From Proposition
\ref{prop:non-lc-locus}, $-m_{\cI(X,D)}(x)$ can be thought of as
contribution of $\NonKLT(X,D)$ (and also one of $\NonLC(X,D)$ if
$X$ is log terminal outside $\Supp(D)$).
\begin{example}
If $(X,D)$ is Kawamata log terminal or if $(X,D)$ is log canonical
and $X$ is log terminal outside $\Supp(D)$, then from Proposition
\ref{prop:non-lc-locus}, we can remove the term $-m_{\cI(X,D)}(x)$.
From Corollary \ref{cor:H-def-ideal}, if we give the reduced scheme
structure to the non-sc locus $\NonSC(X,D)$, then. 
\[
N_{\cH(X,D)}(x)=N_{\NonSC(X,D)}(x).
\]
Thus inequality (\ref{eq:general-Vojta}) is written as
\begin{equation}
h_{K_{(X,D)}}(x)-N_{\NonSC(X,D)}(x)\le d_{k}(x)+\epsilon h_{A}(x)+O(1).\label{eq:Vojta-lc-case}
\end{equation}

\end{example}

\begin{example}
\label{exa:refined-Silverman}Let $X$ be a smooth projective variety
and $D$ a reduced simple normal crossing divisor on $X$. Then $\NonSC(X,D)=\Supp(D)$
and the left hand side of (\ref{eq:general-Vojta}) is equal to
\[
h_{K_{(X,D)}}(x)-N_{D}(x)=h_{K_{X}}+m_{D}.
\]
Thus Conjecture \ref{conj:generalized-Vojta} is the same as Conjecture
\ref{conj:Vojta} in this situation. 

Let $Y\subset X$ be a smooth closed subscheme of codimension $r$
whith transversally intersects $D$. Let us next consdier the log
pair $(X,D+(r-1)Y)$. Looking at the blowup of $X$ along $Y$, we
can see that this log pair is log canonical and 
\[
\NonSC(X,D+(r-1)Y)=\Supp(D).
\]
Therefore (\ref{eq:Vojta-lc-case}) in this case becomes 
\[
h_{K_{X}}(x)+h_{D}(x)+(r-1)h_{Y}(x)-N_{D}(x)\le d_{k}(x)+\epsilon h_{A}(x)+O(1).
\]
When $D$ is linearly equivalent to $-K_{X}$, then this is written
also as
\begin{equation}
h_{Y}(x)\le\frac{1}{r-1}\left(d_{k}(x)+\epsilon h_{A}(x)+N_{D}(x)\right)+O(1).\label{eq:Silverman-ineq}
\end{equation}
This slightly refines Silverman's result \cite[Th. 6]{MR2162351},
which was stated in relation to a problem of bounding greatest common
divisors, by removing $\delta\epsilon$ appearing there (cf. Appendix).
\end{example}
Although Conjecture \ref{conj:generalized-Vojta} is more general
than Conjecture \ref{conj:Vojta}, they are in fact equivalent:
\begin{prop}
\label{prop:Conjectures-equiv}Let $(X,D)$ be an effective projective
log pair and $f\colon Y\to X$ a log resolution of $(X,D)$. Suppose
that Conjecture \ref{conj:Vojta} holds for $Y$ and the reduced simple
normal crossing divisor 
\[
\lceil K_{Y/(X,D)}+\epsilon f^{*}D\rceil-\lfloor K_{Y/(X,D)}\rfloor
\]
for $0<\epsilon\ll1$. Then Conjecture \ref{conj:generalized-Vojta}
holds for $(X,D)$. In particular, Conjectures \ref{conj:Vojta} and
\ref{conj:generalized-Vojta} are equivalent.\end{prop}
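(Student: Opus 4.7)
My plan is to apply Vojta's original Conjecture \ref{conj:Vojta} to the log resolution $Y$ equipped with the prescribed reduced simple normal crossing divisor
\[
E:=\lceil K_{Y/(X,D)}+\epsilon f^{*}D\rceil-\lfloor K_{Y/(X,D)}\rfloor
\]
and then to transport the resulting inequality down to $X$ via functoriality of Weil functions (Proposition \ref{prop:properties-Weil-fn}). The converse direction, namely that Conjecture \ref{conj:generalized-Vojta} implies Conjecture \ref{conj:Vojta}, is immediate from Example \ref{exa:refined-Silverman}, which identifies the two inequalities when $X$ is smooth and $D$ is a reduced SNC divisor; thus the desired equivalence reduces to establishing the main implication.

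First I would check that $E$ is reduced and SNC once $\epsilon>0$ is sufficiently small: its support lies in the SNC divisor $\Exc(f)\cup\bigcup_{i}f^{-1}(D_{i})_{\red}$, and a short case analysis of $\mult_{F}E=\lceil a_{F}+\epsilon d_{F}\rceil-\lfloor a_{F}\rfloor$ (with $a_{F}:=a(F;X,D)$ and $d_{F}:=\mult_{F}f^{*}D$) shows each such multiplicity equals $0$ or $1$. Using $K_{Y}=f^{*}K_{(X,D)}+K_{Y/(X,D)}$, the identity $h_{f^{*}A}=h_{A}\circ f+O(1)$ (with $f^{*}A$ big since $f$ is surjective and birational), and $d_{k}(y)=d_{k}(f(y))$ on the locus where $f$ is an isomorphism, Conjecture \ref{conj:Vojta} applied to $(Y,E)$ and $f^{*}A$ produces, for any $r\ge1$ and $\epsilon'>0$, a proper closed $Z_{Y}\subset Y$ such that
\[
h_{K_{(X,D)}}(x)+h_{K_{Y/(X,D)}}(y)+m_{E}(y)\le d_{k}(x)+\epsilon' h_{A}(x)+O(1)
\]
for every algebraic point $x\in X$ of degree $\le r$ whose unique preimage $y\in Y$ lies outside $Z_{Y}\cup\Exc(f)$. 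Setting $Z:=f(Z_{Y}\cup\Exc(f))$ then gives the required exceptional subset.

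The technical core is the pointwise lower bound
\[
h_{K_{Y/(X,D)}}(y)+m_{E}(y)\ge -N_{\cH(X,D)}(x)-m_{\cI(X,D)}(x)+O(1).
\]
Let $N$ be the negative part of $\lfloor K_{Y/(X,D)}\rfloor$ and $M$ that of $\lceil K_{Y/(X,D)}+\epsilon f^{*}D\rceil$, both effective integral divisors supported on $\{F:a_{F}<0\}$. A short check shows that for an ideal $\cJ\subset\cO_{X}$, the inclusion $f^{-1}\cJ\subset\cO_{Y}(\lfloor K_{Y/(X,D)}\rfloor)$ inside the sheaf of total quotient rings is equivalent to $f^{-1}\cJ\subset\cO_{Y}(-N)$; hence the universal property of $f_{\clubsuit}$ gives $f^{-1}\cH(X,D)\subset\cO_{Y}(-N)$ and similarly $f^{-1}\cI(X,D)\subset\cO_{Y}(-M)$. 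Proposition \ref{prop:properties-Weil-fn} then yields $N_{\cH(X,D)}(x)\ge N_{N}(y)+O(1)$ and $m_{\cI(X,D)}(x)\ge m_{M}(y)+O(1)$, so it suffices to establish
\[
h_{K_{Y/(X,D)}}(y)+m_{E}(y)+N_{N}(y)+m_{M}(y)\ge O(1).
\]
Expanding as a sum over prime divisors $F$ and splitting $h_{F}=N_{F}+m_{F}$, I would verify that the coefficient of $N_{F}(y)$ is $a_{F}$ for $a_{F}\ge0$ and equals the fractional part $a_{F}-\lfloor a_{F}\rfloor\in[0,1)$ for $a_{F}<0$, while the coefficient of $m_{F}(y)$ is non-negative in every case (positive when $a_{F}\ge0$, zero in the integral cases with $a_{F}\le-1$, and equal to the fractional part when $a_{F}<0$ is non-integer). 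This per-divisor bookkeeping, carefully distinguishing integral/non-integral $a_{F}$ and whether $d_{F}$ vanishes, is the main obstacle; once handled, summing over $F$ concludes the argument.
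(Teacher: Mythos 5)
Your proof is correct and follows essentially the same strategy as the paper: apply Conjecture~\ref{conj:Vojta} on a log resolution $Y$ to the divisor $E$, use the tautological inclusions supplied by the definition of $f_{\clubsuit}$ to compare $N_{\cH(X,D)}\circ f$ and $m_{\cI(X,D)}\circ f$ with counting/proximity functions on $Y$, and note that $f^{*}A$ remains big. Where you differ is in the final bookkeeping. The paper does a short divisor-level chain: from $f^{-1}\cH\subset\cO_Y(\lfloor K_{Y/(X,D)}\rfloor)$ and $f^{-1}\cI\subset\cO_Y(\lceil K_{Y/(X,D)}+\epsilon f^{*}D\rceil)$ it deduces $N_{\cH}\circ f\ge N_{-\lfloor K_{Y/(X,D)}\rfloor}$ and $m_{\cI}\circ f\ge m_{-\lceil K_{Y/(X,D)}+\epsilon f^{*}D\rceil}$, then uses $-h_{K_{Y/(X,D)}}\le h_{-\lfloor K_{Y/(X,D)}\rfloor}+O(1)$ (the difference is the height of the effective fractional part) and $h_E-N_E=m_E$-type identities to land directly on $h_{K_Y}+m_E$. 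You instead first pass to the \emph{negative parts} $N=\lfloor K_{Y/(X,D)}\rfloor_{-}$ and $M=\lceil K_{Y/(X,D)}+\epsilon f^{*}D\rceil_{-}$ (using that an ideal sheaf contained in $\cO_Y(E)$ is automatically contained in $\cO_Y(-E^{-})$) and then verify nonnegativity of the coefficient of each $N_F(y)$ and each $m_F(y)$ by a case analysis on $a_F$ and $\mult_F f^{*}D$. Both arguments are equivalent computations; yours is more verbose but also makes explicit a few things the paper suppresses, notably that $E$ really is a reduced SNC divisor for $0<\epsilon\ll1$, that $d_k(y)=d_k(f(y))$ off $\Exc(f)$, and that the exceptional set can be taken to be $f(Z_Y\cup\Exc(f))$. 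One minor slip: the $m_F$-coefficient is not ``positive when $a_F\ge0$'' but only nonnegative (it vanishes when $a_F=0$ and $F$ is not in the support of $f^{*}D$); this does not affect the conclusion.
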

\begin{proof}
The proof here is similar to the one of Vojta's similar result \cite[Prop. 4.3]{MR2867936}.
By definition, 
\begin{gather*}
f^{-1}\cH(X,D)\subset\cO_{Y}(\lfloor K_{Y/(X,D)}\rfloor)
\end{gather*}
and for $0<\epsilon\ll1$,

\[
f^{-1}\cI(X,D)\subset\cO_{Y}(\lceil K_{Y/(X,D)}+\epsilon f^{*}D\rceil).
\]
These imply 
\begin{gather*}
N_{\cH(X,D)}\circ f\ge N_{-\lfloor K_{Y/(X,D)}\rfloor},\\
m_{\cI(X,D)}\circ f\ge m_{-\lceil K_{Y/(X,D)}+\epsilon f^{*}D\rceil}.
\end{gather*}
We have 
\begin{align*}
 & (h_{K_{(X,D)}}-N_{\cH(X,D)}-m_{\cI(X,D)})\circ f\\
 & \le h_{K_{Y}}-h_{K_{Y/(X,D)}}-N_{-\lfloor K_{Y/(X,D)}\rfloor}-m_{-\lceil K_{Y/(X,D)}+\epsilon f^{*}D\rceil}\\
 & \le h_{K_{Y}}+h_{-\lfloor K_{Y/(X,D)}\rfloor}-N_{-\lfloor K_{Y/(X,D)}\rfloor}-m_{-\lceil K_{Y/(X,D)}+\epsilon f^{*}D\rceil}\\
 & =h_{K_{Y}}+m_{\lceil K_{Y/(X,D)}+\epsilon f^{*}D\rceil-\lfloor K_{Y/(X,D)}\rfloor}.
\end{align*}
To show the first assertion of the proposition, it suffices to recall
that for a big divisor $A$ on $X$, $f^{*}A$ is also big. The second
assertion is now obvious.\end{proof}
\begin{rem}
If $X$ is smooth and $D\subsetneq X$ is a genuine closed subscheme,
then $-m_{\cI(X,D)}$ is the same correction term as the one used
in \cite{MR2867936} (see Remark \ref{rem:compare-ideals}). In this
case, for a log resolution $f\colon Y\to X$ of $(X,D)$, $K_{Y/(X,D)}=K_{Y/X}-f^{-1}D$
is a $\ZZ$-divisor. Since $K_{Y/X}\ge0$,
\[
\cH(X,D)=f_{*}\cO_{Y}(K_{Y/X}-f^{-1}D)\supset f_{*}\cO_{Y}(f^{-1}D)\supset\cI_{D},
\]
where $\cI_{D}$ is the defining ideal sheaf of $D$. Therefore, 
\[
h_{K_{(X,D)}}-N_{\cH(X,D)}-m_{\cI(X,D)}\ge h_{K_{X}}+h_{D}-N_{D}-m_{\cI(X,D)}=h_{K_{X}}+m_{D}-m_{\cI(X,D)}.
\]
It follows that for such a pair $(X,D)$, Conjecture \ref{conj:generalized-Vojta}
is slightly stronger than Conjecture 4.2 of \cite{MR2867936} except
that Vojta considers more general base fields as well as non-projective
complete varieties. 
\end{rem}
Since Conjecture \ref{conj:Vojta} is known to hold if $\dim X=1$
and $r=1$ (for instances, see \cite[Rem. 14.3.5]{MR2216774} or \cite{MR1151542}),\footnote{It is said that Shinichi Mochizuki \cite{Mochizuki-IUTeich-IV} announced
a proof of Conjecture \ref{conj:Vojta} for an arbitrary $r$ in August,
2012 and it is now under a process of verification. } Proposition \ref{prop:Conjectures-equiv} implies:
\begin{cor}
\label{cor:Conj-dim-1}Suppose that $X$ has dimension one. If we
set $r=1$, then Conjecture \ref{conj:generalized-Vojta} holds.
\end{cor}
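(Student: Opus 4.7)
The strategy is essentially spelled out by the preceding discussion: this corollary is to be obtained as a direct application of Proposition \ref{prop:Conjectures-equiv}, combined with the known one-dimensional, rational-point case of the classical Vojta conjecture. The plan is therefore to verify that the hypotheses of Proposition \ref{prop:Conjectures-equiv} are fulfilled in the one-dimensional setting.

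First I would fix an effective projective log pair $(X,D)$ with $\dim X = 1$, a big divisor $A$ on $X$, and $\epsilon>0$, and take a log resolution $f\colon Y\to X$ of $(X,D)$. Since $f$ is birational and proper, $Y$ is a smooth projective curve over $k$. Next, for $0<\epsilon\ll 1$, I would examine the divisor
\[
E_\epsilon := \lceil K_{Y/(X,D)}+\epsilon f^{*}D\rceil - \lfloor K_{Y/(X,D)}\rfloor
\]
appearing in the statement of Proposition \ref{prop:Conjectures-equiv} and check that it is a reduced simple normal crossing divisor. On a smooth curve any effective $\ZZ$-divisor with coefficients in $\{0,1\}$ is automatically simple normal crossing, so it suffices to verify the coefficient bound: for each prime divisor $F$ on $Y$ with $a:=\mult_F(K_{Y/(X,D)})$ and $b:=\mult_F(f^{*}D)\ge 0$, one has $\lceil a+\epsilon b\rceil-\lfloor a\rfloor\in\{0,1\}$ for $\epsilon$ sufficiently small. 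This is a short case analysis according to whether $a\in\ZZ$ and whether $b=0$, so there is no real obstacle here.

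Once $E_\epsilon$ is identified as a reduced simple normal crossing divisor on the smooth projective curve $Y$, I would invoke the one-dimensional, rational-point case of Conjecture \ref{conj:Vojta}, which holds for $(Y,E_\epsilon)$ with $r=1$ by the classical results cited in the paper (e.g.\ \cite[Rem.\ 14.3.5]{MR2216774} or \cite{MR1151542}). Applying Proposition \ref{prop:Conjectures-equiv} then transfers the conclusion from $(Y,E_\epsilon)$ back to $(X,D)$, and since $r=1$ eliminates the role of the degree bound $[k(x):k]\le r$ on both sides, the resulting inequality is exactly Conjecture \ref{conj:generalized-Vojta} for $(X,D)$. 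Since no step uses anything beyond the already-established proposition and the classical one-dimensional input, this is the entire proof; the only quasi-technical point is the elementary coefficient verification above.
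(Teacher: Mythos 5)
Your proposal is correct and takes essentially the same route as the paper: the paper simply notes that Conjecture~\ref{conj:Vojta} is known in dimension one for $r=1$ and then cites Proposition~\ref{prop:Conjectures-equiv} to conclude. The only thing you add is the explicit coefficient check showing that $\lceil K_{Y/(X,D)}+\epsilon f^{*}D\rceil-\lfloor K_{Y/(X,D)}\rfloor$ has entries in $\{0,1\}$ for $0<\epsilon\ll 1$, which the paper leaves implicit (it is already built into the phrasing of the hypothesis of Proposition~\ref{prop:Conjectures-equiv} and holds in any dimension, not just on curves); this is a reasonable bit of extra care, not a different proof.
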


\section{\label{sec:singular-general-type}Log pairs of general type}

In this section, we specialize Vojta's conjecture to log pairs $(X,D)$
of general type and derive geometric consequences about potentially
dense subvarieties. 
\begin{defn}
A variety $X$ over $k$ is said to be \emph{potentially dense }if
for some finite extension $L/k$, $X(L)$ is Zariski dense in $X$. 
\end{defn}
For instance, an irreducible curve which is birational over $\bar{k}$
to either $\PP^{1}$ or an elliptic curve is potentially dense. More
generally, the image of a rational map $G\dasharrow X$ of a group
variety $G$ is potentially dense; this follows from the facts that
every connected group variety is an extension of an abelian variety
by a connected linear algebraic group (see \cite{zbMATH01775202}),
that every abelian variety is potentially dense \cite[Prop. 4.2]{MR2011748}
and that every linear algebraic group is unirational, in particular,
it has the Zariski dense set of rational points over any number field
\cite[Th. 18.2 and Cor. 18.3]{zbMATH00050185}. Lang \cite[p. 17]{MR1112552}
conjectured that for a smooth variety $X$ of general type (that is,
$K_{X}$ is big), there exists a proper closed subset $Z\subsetneq X$
such that every potentially dense subvariety $Y\subset X$ is contained
in $Z$. This conjecture follows from Vojta's conjecture \ref{conj:Vojta}.
We can generalize it a little to varieties with canonical singularities
as follows:
\begin{prop}
Suppose that Conjecture \ref{conj:Vojta} holds. Let $X$ be a $\QQ$-Gorenstein
canonical projective variety such that $K_{X}$ is big. Then there
exists a proper closed subset $Z\subsetneq X$ such that every potentially
dense subvariety $Y\subset X$ is contained in $Z$. \end{prop}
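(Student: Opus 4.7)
The plan is to apply Conjecture \ref{conj:generalized-Vojta}---which by Proposition \ref{prop:Conjectures-equiv} is equivalent to the assumed Conjecture \ref{conj:Vojta}---to the log pair $(X,0)$, combine the resulting height inequality with Kodaira's lemma, and conclude via Northcott's theorem.

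First I would observe that both correction terms on the left-hand side of the generalized Vojta inequality vanish in the present setting. Since $X$ is canonical, $\NonSC(X,0)=\NonC(X)=\emptyset$, so Proposition \ref{prop:H-sc} gives $\cH(X,0)=\cO_X$. Canonical implies Kawamata log terminal, hence $\NonKLT(X,0)=\emptyset$, and Proposition \ref{prop:non-lc-locus} yields $\cI(X,0)=\cO_X$. Fixing an ample divisor $A$ and $\epsilon>0$, Conjecture \ref{conj:generalized-Vojta} then provides a proper closed subset $Z_\epsilon\subsetneq X$---crucially independent of the degree bound $r$---such that
\begin{equation}
h_{K_X}(x)\le d_k(x)+\epsilon\, h_A(x)+O_r(1)\label{eq:plan-Vojta}
\end{equation}
for every $x\in(X\setminus Z_\epsilon)(\bar{k})$ with $[k(x):k]\le r$.

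Second, Kodaira's lemma applied to the big divisor $K_X$ and the ample $A$ furnishes a rational $\delta>0$ and an effective $\QQ$-divisor $E$ such that $h_{K_X}(x)=\delta\, h_A(x)+h_E(x)+O(1)$. Since $E$ is effective, $h_E$ is bounded below off $\Supp(E)$, giving $h_{K_X}(x)\ge \delta\, h_A(x)+O(1)$ for $x\notin\Supp(E)$. Setting $\epsilon:=\delta/2$ and $Z:=Z_{\delta/2}\cup\Supp(E)$---a proper closed subset of $X$---inequality (\ref{eq:plan-Vojta}) becomes
\[
\tfrac{\delta}{2}\, h_A(x)\le d_k(x)+O_r(1)\qquad\bigl(x\in(X\setminus Z)(\bar{k}),\ [k(x):k]\le r\bigr).
\]

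Finally, let $Y\subset X$ be a positive-dimensional potentially dense subvariety, witnessed by a finite extension $L/k$, and suppose for contradiction that $Y\not\subset Z$. Applying the above with $r=[L:k]$ and noting that for $x\in Y(L)$ the residue field $k(x)$ ranges over the finitely many intermediate fields of $L/k$, we see that $d_k(x)$ is uniformly bounded, hence so is $h_A(x)$ on $Y(L)\cap(X\setminus Z)$; Northcott's theorem makes this set finite. But $Y\setminus Z$ is a nonempty Zariski-open subset of the positive-dimensional irreducible variety $Y$, and Zariski density of $Y(L)$ in $Y$ forces $Y(L)\cap(Y\setminus Z)$ to be infinite---a contradiction, so $Y\subset Z$. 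The key subtle point on which the whole argument hinges is that Vojta's exceptional set $Z_\epsilon$ can be chosen once and for all independently of $r$, so that a single $Z$ absorbs potentially dense subvarieties with witnessing extensions of arbitrarily large degree.
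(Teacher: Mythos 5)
Your proposal is correct and follows essentially the same route as the paper's proof: apply the generalized Vojta inequality to $(X,0)$, observe that canonicity kills both correction terms, bound $d_k$ on $L$-points for fixed $L$, and conclude via Northcott. The only difference is cosmetic and, if anything, an improvement in precision: the paper sets $A=K_X$ directly in Vojta's inequality and then asserts $h_{A'}\le C\,h_{K_X}+O(1)$ for an ample $A'$ "for all $x\in X(\bar k)$", a step that really only holds off the support of the effective part in Kodaira's decomposition; you instead take $A$ ample at the outset, invoke Kodaira's lemma explicitly, and enlarge the exceptional set to include $\Supp(E)$, which is the clean way to close that gap. Your emphasis on the $r$-independence of the Vojta exceptional set $Z$ is the correct key point and is implicit in the paper.
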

\begin{proof}
Let $Z\subset X$ be a proper closed subset as in Conjecture \ref{conj:Vojta}
applied to $D=0$ and $A=K_{X}$. For any fixed finite extension $L/k$
and for all $x\in(X\setminus Z)(L)$, we have
\[
(1-\epsilon)h_{A}(x)\le O(1).
\]
It follows that $h_{A}$ is bounded from above on $(X\setminus Z)(L)$.
Since $A$ is big, for any ample divisor $A'$, there exists a constant
$C>0$ such that
\[
h_{A'}(x)\le Ch_{A}(x)+O(1)
\]
for all $x\in X(\bar{k})$. Therefore $h_{A'}$ is also bounded from
above on $(X\setminus Z)(L)$. From Northcott's theorem, $(X\setminus Z)(L)$
is a finite set, which shows the proposition.
\end{proof}
It would be natural to ask what is expected for log pairs $(X,D)$
with $K_{(X,D)}=K_{X}+D$ big. First we make clear the meaning of
``$K_{X}+D$ is big''.
\begin{defn}
A $\QQ$-subscheme $D$ of a projective variety $X$ is \emph{big}
if for a log resolution $f\colon Y\to X$ of $(X,D)$, the $\QQ$-diviosr
$f^{*}D$ is big. We say that a projective log pair $(X,D)$ is \emph{of
general type} if for some (hence every) expression of $K_{X}$ as
a $\QQ$-subscheme, $K_{X}+D$ is big. \end{defn}
\begin{prop}
\label{prop:NonSC-height}Let $(X,D)$ be an effective projective
log pair of general type, let $A$ be a big divisor on $X$. Suppose
that Conjecture \ref{conj:Vojta} holds. Then there exists a proper
closed subset $Z\subset X$ and a constant $C>0$ such that for every
$x\in(X\setminus Z)(k)$, 
\[
h_{A}(x)\le Ch_{\NonSC(X,D)}(x)+O(1).
\]
Furthermore, if $(X,D)$ is log canonical, then $h_{\NonSC(X,D)}(x)$
in the above inequality can be replaced with $N_{\NonSC(X,D)}(x)$.\end{prop}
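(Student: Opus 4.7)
\smallskip

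The plan is to obtain the bound from the generalized form of Vojta's conjecture (Conjecture \ref{conj:generalized-Vojta}), which by Proposition \ref{prop:Conjectures-equiv} is equivalent to Conjecture \ref{conj:Vojta} and is therefore available under our hypothesis. Applying it to $(X,D)$ with $r=1$ and a small $\epsilon>0$ (so that the logarithmic discriminant $d_{k}(x)$ vanishes on $X(k)$), I obtain a proper closed subset $Z\subset X$ such that on $(X\setminus Z)(k)$
\[
h_{K_{(X,D)}}(x)\le N_{\cH(X,D)}(x)+m_{\cI(X,D)}(x)+\epsilon h_{A}(x)+O(1).
\]

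Next I would bound the two correction terms in terms of $h_{\NonSC(X,D)}$. Let $\cN$ be the reduced ideal sheaf of $\NonSC(X,D)$. By Proposition \ref{prop:H-sc} we have $V(\cH(X,D))=\NonSC(X,D)$, and by Proposition \ref{prop:non-lc-locus} combined with the inclusions $\NonLC\subset\NonKLT\subset\NonSC$ we have $V(\cI(X,D))\subset\NonSC(X,D)$. By Noetherianity there exist positive integers $n_{1},n_{2}$ with $\cN^{n_{1}}\subset\cH(X,D)$ and $\cN^{n_{2}}\subset\cI(X,D)$, so the monotonicity and multiplicativity of Weil functions from Proposition \ref{prop:properties-Weil-fn} yield $N_{\cH(X,D)}\le n_{1}N_{\NonSC(X,D)}$ and $m_{\cI(X,D)}\le n_{2}m_{\NonSC(X,D)}$, and in particular $N_{\cH(X,D)}+m_{\cI(X,D)}\le C_{1}\,h_{\NonSC(X,D)}+O(1)$ for some $C_{1}>0$.

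Since $(X,D)$ is of general type, $K_{X}+D$ is big, and comparing big divisors (every big divisor dominates a multiple of any fixed divisor up to an exceptional proper closed subset) supplies a constant $C_{2}>0$ with $h_{A}(x)\le C_{2}h_{K_{(X,D)}}(x)+O(1)$ outside a proper closed set, which I enlarge $Z$ to include. Substituting these bounds into the Vojta inequality gives
\[
(1-C_{2}\epsilon)\,h_{A}(x)\le C_{1}C_{2}\,h_{\NonSC(X,D)}(x)+O(1),
\]
and choosing $\epsilon<1/C_{2}$ at the outset produces the required $C$ with $h_{A}(x)\le C h_{\NonSC(X,D)}(x)+O(1)$. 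For the refinement when $(X,D)$ is log canonical, Corollary \ref{cor:H-def-ideal} strengthens the bound on $\cH(X,D)$ to an \emph{equality} $\cH(X,D)=\cN$, so $N_{\cH(X,D)}=N_{\NonSC(X,D)}$ exactly; together with the vanishing (or absorption) of the proximity correction $m_{\cI(X,D)}$ afforded by Proposition \ref{prop:non-lc-locus} in this setting (cf.\ Example \ref{exa:refined-Silverman}), the same bootstrapping with $\epsilon$ delivers $h_{A}(x)\le C N_{\NonSC(X,D)}(x)+O(1)$.

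The main obstacle I anticipate is keeping the $-m_{\cI(X,D)}$ term honest in the second half of the statement: one needs either to exploit Corollary \ref{cor:H-def-ideal}/Proposition \ref{prop:non-lc-locus} to ensure this proximity function is controlled by a counting function of $\NonSC(X,D)$ (rather than by its full height), or to observe that in the log canonical case one may pass to a log resolution and apply the sharper form of Conjecture \ref{conj:Vojta} as in the derivation of (\ref{eq:Vojta-lc-case}). Apart from this, the argument is a bootstrap of the Vojta inequality against the bigness of $K_{(X,D)}$, with Noetherianity supplying the linear comparisons between the multiplier-like ideals and the reduced ideal of $\NonSC(X,D)$.
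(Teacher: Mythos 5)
Your proposal follows essentially the same route as the paper's proof: invoke Conjecture \ref{conj:generalized-Vojta} (available via Proposition \ref{prop:Conjectures-equiv}) with $r=1$, control the correction terms via Noetherian comparison of the multiplier-like ideals with powers of the reduced ideal $\cN$ of $\NonSC(X,D)$, and bootstrap against the bigness of $K_{(X,D)}$ by choosing $\epsilon$ small. The only cosmetic difference is that you bound $m_{\cI(X,D)}\le n_{2}m_{\NonSC(X,D)}$ directly via $\cN^{n_2}\subset\cI(X,D)$, whereas the paper first uses $\cH(X,D)\subset\cI(X,D)$ to get $N_{\cH}+m_{\cI}\le h_{\cH}$ and then compares $h_{\cH}$ with $Ch_{\NonSC(X,D)}$ via the \emph{same} Noetherian inclusion $\cN^{C}\subset\cH(X,D)$ (the paper's text states this inclusion the other way round, but your version is the one actually needed); in the log canonical case both arguments rest on $\cH(X,D)=\cN$ from Corollary \ref{cor:H-def-ideal}, with the paper simply asserting $m_{\cI(X,D)}=0$ where you more cautiously flag, as your ``main obstacle,'' the need to dispose of that proximity term.
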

\begin{proof}
From the assumption and Proposition \ref{prop:Conjectures-equiv},
Conjecture \ref{conj:generalized-Vojta} holds. Applying it to $(X,D)$,
the given divisor $A'$ and $r=1$, we obtain the inequality 
\begin{equation}
h_{K_{(X,D)}}(x)-\epsilon h_{A'}(x)\le N_{\cH(X,D)}(x)+m_{\cI(X,D)}(x)+O(1)\label{eq:h_N-1}
\end{equation}
holding for all $x\in(X\setminus W)(k)$ for a proper closed subset
$W$. For $0<\epsilon\ll1$, we have
\begin{equation}
2\epsilon h_{A'}(x)\le h_{K_{(X,D)}}(x)+O(1)\label{eq:hA'}
\end{equation}
for $x\in(X\setminus\Supp(D))(k)$. 

If $(X,D)$ is log canonical, then $m_{\cI(X,D)}(x)=0$. In the general
case, since $\cH(X,D)\subset\cI(X,D)$, we have
\[
N_{\cH(X,D)}(x)+m_{\cI(X,D)}(x)\le N_{\cH(X,D)}(x)+m_{\cH(X,D)}(x)=h_{\cH(X,D)}(x).
\]
For an integer $C\gg0$, the $C$-th power $\cH(X,D)^{C}$ of $\cH(X,D)$
is contained in the defining ideal sheaf of the closed subset $\NonSC(X,D)$,
and 
\begin{equation}
N_{\cH(X,D)}\le CN_{\NonSC(X,D)}\text{ and }h_{\cH(X,D)}\le Ch_{\NonSC(X,D)}.\label{eq:NCN}
\end{equation}
Combining (\ref{eq:h_N-1}), (\ref{eq:hA'}) and (\ref{eq:NCN}),
we obtain the proposition. 
\end{proof}
We may interpret this result as that most rational points lie near
$\NonSC(X,D)$, which sounds a little surprising, in particular, if
$\NonSC(X,D)$ has codimension $\ge2$. 
\begin{cor}
\label{cor:singular-Lang-NonSC}Let $(X,D)$ be an effective log pair
of general type. Suppose that Conjecture \ref{conj:Vojta} holds.
Then there exists a proper closed subset $Z\subset X$ such that for
every potentially dense closed subvariety $Y\subset X$, either $Y$
is contained in $Z$, or $Y$ intersects $\NonSC(X,D)$. \end{cor}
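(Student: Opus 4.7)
My plan is to adapt Proposition \ref{prop:NonSC-height} to algebraic points of bounded degree. Given a potentially dense closed subvariety $Y\subset X$, I would first choose a finite extension $L/k$ such that $Y(L)$ is Zariski dense in $Y$, and set $r:=[L:k]$. Apply Conjecture \ref{conj:generalized-Vojta} (which holds under our hypothesis via Proposition \ref{prop:Conjectures-equiv}) to $(X,D)$ with this $r$, a fixed ample divisor $A$ on $X$, and a small rational $\epsilon>0$. Running through the computation of Proposition \ref{prop:NonSC-height} while carrying the discriminant term $d_k(x)$ from (\ref{eq:general-Vojta}) to the right-hand side, one obtains a proper closed subset $Z\subset X$ and a constant $C>0$ such that
\[
h_A(x)\le C\,h_{\NonSC(X,D)}(x)+d_k(x)+O(1)
\]
for every $x\in(X\setminus Z)(\bar k)$ with $[k(x):k]\le r$. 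As Conjecture \ref{conj:Vojta} is phrased (``depending only on $X,D,A,\epsilon$''), I take $Z$ independent of $r$, so the same $Z$ works uniformly in $L$.

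I would then argue by contradiction: assume $Y\not\subset Z$ and $Y\cap\NonSC(X,D)=\emptyset$. By the functoriality of Weil functions (Proposition \ref{prop:properties-Weil-fn}(1)), the ideal sheaf defining $\NonSC(X,D)$ pulls back to the unit ideal along $Y\hookrightarrow X$, and hence $h_{\NonSC(X,D)}$ is $O(1)$ on $Y(\bar k)$. Every $x\in Y(L)$ satisfies $k(x)\subset L$, so $d_k(x)$ takes only finitely many values and is $O(1)$ on $Y(L)$. The displayed inequality then forces $h_A$ to be bounded on $Y(L)\cap(X\setminus Z)$. Since $A$ is ample on $X$, its restriction $A|_Y$ is ample on $Y$, and Northcott's theorem makes this set finite. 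On the other hand, density of $Y(L)$ in $Y$ combined with $Y\not\subset Z$ implies that $Y(L)\cap(X\setminus Z)=Y(L)\setminus(Y\cap Z)$ is Zariski dense in $Y$, hence infinite provided $\dim Y\ge 1$. This is the desired contradiction.

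The main obstacle, in my view, is the $r$-uniformity of $Z$: the argument hinges on $Z$ in Vojta's conjecture being chosen independent of the degree bound $r$ as the author's phrasing of Conjecture \ref{conj:Vojta} asserts, and one must verify this reading is preserved through the equivalence of Conjectures \ref{conj:Vojta} and \ref{conj:generalized-Vojta} (Proposition \ref{prop:Conjectures-equiv}). A secondary minor subtlety is the degenerate case $\dim Y=0$, where Northcott no longer yields a contradiction from a single point; this case can be dispatched either by implicitly restricting to positive-dimensional $Y$ or by absorbing the finitely many problematic closed points of bounded degree into $Z$.
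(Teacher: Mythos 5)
Your proof is correct and follows the same strategy as the paper's: combine the height inequality of Proposition \ref{prop:NonSC-height} with Lemma \ref{lem:disjoint-height-bounded} and Northcott's theorem to force a contradiction from a potentially dense $Y$ disjoint from $\NonSC(X,D)$ and not contained in $Z$. The two refinements you flag are real. The paper writes ``if necessary enlarging $k$'' and then re-invokes Proposition \ref{prop:NonSC-height} (which was proved for $k$-rational points with a $Z$ chosen over $k$), implicitly relying on the fact that Conjecture \ref{conj:Vojta} asserts $Z$ depends only on $X,D,A,\epsilon$ and not on $r$; you instead keep the base field fixed, work with algebraic points of degree $\le r=[L:k]$, and carry the $d_k(x)$ term explicitly, observing it is $O(1)$ on $Y(L)$. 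This is a cleaner way to say the same thing, and it avoids the (mild) issue that base-changing $X$ to $L$ in Proposition \ref{prop:NonSC-height} could a priori change $Z$. Your second point is also valid: for $\dim Y=0$ the set $Y(L)$ is finite and Northcott yields no contradiction, so the statement should either be read as being about positive-dimensional $Y$ (the interesting case, and the one implicit throughout the paper) or reformulated; the paper silently assumes this. Neither correction changes the substance of the argument, so this is best regarded as the same proof with the bookkeeping made precise.
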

\begin{proof}
Let $Z\subset X$ be a closed subvariety as in Proposition \ref{prop:NonSC-height}.
To obtain a contradiction, supppose that there exists a potentially
dense closed subvariety $Y\subset X$ such that $Y\cap\NonSC(X,D)=\emptyset$
and $Y\not\subset Z$. If necessary enlarging $k$, we may suppose
that $Y(k)$ is Zariski dense in $Y$. From the lemma below, the height
function $h_{\NonSC(X,D)}$ is bounded on $Y(k)$. On the other hand,
from Proposition \ref{prop:NonSC-height} and Northcott's theorem,
the same function cannot be bounded on $Y(k)$, a contradition. The
corollary follows. \end{proof}
\begin{lem}
\label{lem:disjoint-height-bounded}Let $X$ be a projective variety
and $C,D\subset X$ proper closed subschemes with $C\cap D=\emptyset$.
Let $h_{D}\colon X(k)\to\RR\cup\{\infty\}$ be a height function of
$D$. Then its restriction $h_{D}|_{C(k)}$ is a bounded function.\end{lem}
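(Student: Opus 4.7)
The plan is to invoke the functoriality of Weil functions (Proposition \ref{prop:properties-Weil-fn}(1)). Let $i \colon C \hookrightarrow X$ denote the inclusion and $\fa_{D} \subset \cO_{X}$ the ideal sheaf defining $D$. The scheme-theoretic preimage $i^{-1}D$ is then the closed subscheme of $C$ cut out by the pullback ideal $i^{-1}\fa_{D} \subset \cO_{C}$. The first key step is to observe that the disjointness hypothesis $C \cap D = \emptyset$ makes this preimage empty: for any point $p \in C$ one has $p \notin D$, hence $(\fa_{D})_{p} = \cO_{X,p}$, so $i^{-1}\fa_{D}$ is locally (hence globally) the unit ideal $\cO_{C}$.

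Once this is in hand, the functoriality formula
\[
\lambda_{D} \circ i = \lambda_{i^{-1}D}
\]
(equality up to an $M_{k}$-bounded function) reduces the problem to bounding a Weil function of the empty subscheme of $C$. This is immediate: since $i^{-1}D$ is defined by the unit ideal $\cO_{C}$, a local expression for any associated Weil function uses the generator $1$, giving $-\log \Vert 1 \Vert_{v} + \alpha(x,v) = \alpha(x,v)$ for a continuous, locally $M_{k}$-bounded correction $\alpha$. Thus $\lambda_{D} \circ i$ is itself $M_{k}$-bounded.

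To conclude, for $x \in C(k)$ we have $k(x) = k$, so the definition of the height reads
\[
h_{D}(x) = \sum_{v \in M_{k}} \lambda_{D, v}(i(x)),
\]
a finite sum whose terms are uniformly bounded in $v$, yielding the desired boundedness of $h_{D}|_{C(k)}$. There is no substantive obstacle: the entire content of the argument is the translation of the geometric disjointness $C \cap D = \emptyset$ into the triviality of the pullback ideal $i^{-1}\fa_{D}$, which in turn trivializes, up to $M_{k}$-bounded functions, the restriction of $\lambda_{D}$ to $C$.
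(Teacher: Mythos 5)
Your proof is correct and follows essentially the same route as the paper: apply functoriality of Weil functions to reduce to the pullback subscheme $i^{-1}D \subset C$, observe that disjointness makes this the empty subscheme (unit ideal), and conclude boundedness. One small wording slip: the sum defining $h_D(x)$ runs over all of $M_k$, which is infinite; the right justification is that an $M_k$-bounded Weil function is bounded by an $M_k$-constant (vanishing at all but finitely many places), so its sum over places is bounded.
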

\begin{proof}
From the functoriality of the Weil function, $h_{D}|_{C(k)}$ is a
height function of $D\cap C$ as a closed subscheme of $C$. In our
situation, it is empty and any height function of it is bounded. 
\end{proof}
In Section \ref{sec:Rational-surfaces-of-general-type}, we construct
rational projective surfaces of general type. If we restrict ourselves
to smooth projective varieties, being rational and being of general
type are completely opposite properties and cannot hold at the same
time. However, they can be compatible with each other for singular
varieties. 
\begin{defn}
An irreducible variety $X$ over $\bar{k}$ is said to be \emph{rationally
connected} if general two points $x,y\in X$ are connected by a rational
curve. 
\end{defn}
Rational varieties are rationally connected. The next proposition
shows that the assertions of Proposition \ref{prop:NonSC-height}
and Corollary \ref{cor:singular-Lang-NonSC} never hold if $X$ is
rationally connected and $\NonSC(X,D)$ is replaced with a smooth
closed subset of codimension $\ge2$ which is contained in the smooth
locus of $X$. 
\begin{prop}
\label{prop:rationally-connected}Let $X$ be a rationally connected
projective variety over $\bar{k}$, let $Z\subsetneq X$ be a proper
closed subset and let $W\subset X$ be a smooth closed subvariety
of codimension $\ge2$ with $W\subset X_{\sm}$. Then there exists
a rational curve $C\subset X$ such that $C\not\not\subset Z$ and
$C\cap W=\emptyset$. \end{prop}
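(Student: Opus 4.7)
The plan is to reduce to the smooth case by resolution of singularities and then apply the deformation theory of very free rational curves.

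First, choose a projective resolution $\pi\colon\tilde X\to X$ which is an isomorphism over the smooth locus $X_{\sm}$, as furnished by Hironaka. Since $W\subset X_{\sm}$, the preimage $\tilde W:=\pi^{-1}(W)$ is isomorphic to $W$, smooth, and of codimension $\geq 2$ in $\tilde X$; meanwhile $\tilde Z:=\pi^{-1}(Z)\subsetneq\tilde X$. Rational connectedness is inherited: two general points of $\tilde X$ lie over general points of $X_{\sm}$, and any rational curve in $X$ joining their images has proper transform a rational curve in $\tilde X$ joining them. It therefore suffices to produce a rational curve $\tilde C\subset\tilde X$ with $\tilde C\not\subset\tilde Z$, $\tilde C\cap\tilde W=\emptyset$, and $\pi|_{\tilde C}$ non-constant; then $C:=\pi(\tilde C)$ answers the proposition.

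Since $\tilde X$ is smooth projective rationally connected over the characteristic-zero field $\bar k$, the Koll\'ar--Miyaoka--Mori theorem yields a very free rational curve through a general point. Pick a general point $\tilde x\in\pi^{-1}(X_{\sm}\setminus(Z\cup W))$ together with a very free morphism $f\colon\PP^1\to\tilde X$ satisfying $f(0)=\tilde x$, so that $f^{*}T_{\tilde X}\cong\bigoplus\cO_{\PP^1}(a_i)$ with every $a_i\geq 1$.

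Let $H=\Hom(\PP^1,\tilde X;\,0\mapsto\tilde x)$ be the Hom-scheme of morphisms sending $0$ to $\tilde x$. Since $f^{*}T_{\tilde X}(-0)$ is a direct sum of non-negative line bundles on $\PP^1$, $H$ is smooth at $[f]$, and for every $t\in\PP^1\setminus\{0\}$ the evaluation $ev_t\colon H\to\tilde X$, $g\mapsto g(t)$, is smooth at $[f]$; consequently the total evaluation map $E\colon(\PP^1\setminus\{0\})\times H\to\tilde X$, $(t,g)\mapsto g(t)$, is smooth on an open neighborhood of $(\PP^1\setminus\{0\})\times\{[f]\}$. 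Because $\tilde W$ has codimension $\geq 2$, the preimage $E^{-1}(\tilde W)$ has codimension $\geq 2$ in that open set, whence its projection to the component $H_0$ of $H$ through $[f]$ has image of dimension strictly less than $\dim H_0$. A general deformation $g\in H_0$ of $f$ therefore satisfies $g^{-1}(\tilde W)\subset\{0\}$; combined with $g(0)=\tilde x\notin\tilde W$ this yields $g(\PP^1)\cap\tilde W=\emptyset$. Setting $\tilde C:=g(\PP^1)$, the remaining conditions $\tilde C\not\subset\tilde Z$ and $\pi|_{\tilde C}$ non-constant follow because $g(0)=\tilde x\in\pi^{-1}(X_{\sm})\setminus\tilde Z$.

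The principal obstacle is the deformation step, which relies essentially on the positivity of $f^{*}T_{\tilde X}$; one may equivalently appeal to Koll\'ar's \emph{Rational Curves on Algebraic Varieties}, where it is shown directly that very free rational curves can be deformed to miss any closed subset of codimension at least $2$. The reduction to the smooth case is routine but must be carried out so as to preserve both rational connectedness and the codimension of $W$, which is why we insist that the resolution be an isomorphism over $X_{\sm}$.
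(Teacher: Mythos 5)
Your proof is correct and follows essentially the same route as the paper's: pass to a resolution, appeal to Kollár--Miyaoka--Mori for a rational curve with ample pullback of the tangent bundle, and use positivity to deform the curve off the codimension $\ge 2$ locus $W$ while staying out of $Z$. The only (minor) differences are cosmetic: you work with the base-pointed Hom scheme $\Hom(\PP^1,\tilde X;\,0\mapsto\tilde x)$ and the global smoothness of the evaluation map, whereas the paper picks an explicit $(\dim X-1)$-dimensional subfamily transversal to $W$ at each point of $f^{-1}(W)$; and you spell out the reduction to the smooth case (taking $\pi$ to be an isomorphism over $X_{\sm}$), which the paper leaves implicit.
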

\begin{proof}
Taking a resolution, we may suppose that $X$ is smooth. Let $x\in X(\bar{k)}$
be a point outside $Z\cup W$. By \cite[2.1]{MR1158625}, there exists
a morphism
\[
f\colon\PP^{1}\to X
\]
such that
\begin{enumerate}
\item $f$ is an immersion in the sense that for every closed point $p\in\PP^{1}$,
$T_{p}\PP^{1}\to T_{f(p)}X$ is injective, 
\item $x\in f(\PP^{1})$, 
\item $f^{*}T_{X}$ is ample.
\end{enumerate}
Since 
\[
\mathrm{Ext}^{1}(f^{*}\Omega_{X},\cO_{\PP^{1}})=H^{1}(\PP^{1},f^{*}T_{X})=0,
\]
from \cite[Ch. I, Th. 2.16]{MR1440180}, the moduli space of morphisms
$\PP^{1}\to X$, $\Hom(\PP^{1},X)$, is smooth at $[f]$ and the tangent
space $T_{[f]}\Hom(\PP^{1},Y)$ is identified with $H^{0}(\PP^{1},f^{*}T_{Y})$. 

Let $B:=f^{-1}W$. If $B$ is empty, then $f(\PP^{1})$ is a desired
rational curve. Otherwise, there exists a subvariety $V\subset H^{0}(\PP^{1},f^{*}T_{X})$
of dimension $\dim X-1$ such that for every $b\in B$, the image
of $V$ in $T_{f(b)}X$ is transversal to the image of $T_{b}\PP^{1}$.
We take a smooth irreducible subvariety $W\subset\Hom(\PP^{1},Y)$
of dimension $\dim X-1$ passing through $[f]$ such that $T_{[f]}W=V$.
The induced morphism
\[
G\colon\PP^{1}\times W\to Y
\]
is then an immersion around $B\times\{[g]\}$. Therefore $G^{-1}W$
has codimension $\ge2$ in $\PP^{1}\times W$ around $\PP^{1}\times\{[g]\}$.
This shows that $G^{-1}W$ does not surject onto $W$. We conclude
from this that for a general point $[h]\in V$, the image of $h\colon\PP^{1}\to X$
does not intersect $W$. Similarly, $G^{-1}Z$ has codimension $\ge1$.
Hence, for a general point $[h]\in V$, the fiber $\PP^{1}\times\{[h]\}$
is not contained in $G^{-1}Z$. This means that the image of $h\colon\PP^{1}\to Y$
is not containd in $Z$. Thus, for a general $[h]\in W$, $h(\PP^{1})$
is a rational curve satisfying the desired condition. \end{proof}
\begin{example}
The arguably easiest way to construct a rationally connected variety
of general type is to take the closure of the image of a morphism
$\AA_{k}^{n}\to\PP_{k}^{n+1}$. For an irreducible polynomial $f(x_{1},\dots,x_{n})$
of degree $d$, we consider a closed embedding 
\[
g\colon\AA_{k}^{n}\to\AA_{k}^{n+1},\,(x_{1},\dots,x_{n})\mapsto(x_{1},\dots,x_{n},f(x_{1},\dots,x_{n})).
\]
The closure $X$ of $g(\AA_{k}^{n})$ in $\PP_{k}^{n+1}$ is a hypersurface
of degree $d$. Threfore $X$ is Gorenstein and rational. By the adjunction
formula, if $d\ge n+2$, then $X$ is also of general type. However
the author does not know what kind of singularities such an $X$ would
have. In the next section, we will construct a rational surface of
general type having only log terminal singularities.
\end{example}

\section{\label{sec:Rational-surfaces-of-general-type}Rational surfaces of
general type}

In this section, we show the following proposition.
\begin{prop}
\label{prop:singular-rational-surf}Suppose that $k$ contains a primitive
$n$-th root for some $n\ge5$. Then there exists a projective rational
surface $X$ having only log terminal singularities such that $K_{X}$
is ample.\end{prop}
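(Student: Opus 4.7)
The plan is to construct $X$ explicitly and verify the three required properties: rationality, log terminality of singularities, and ampleness of $K_{X}$. The assumption that $k$ contains a primitive $n$-th root of unity is needed so that the $\mu_{n}$-quotient structures appearing in either of the constructions below are defined over $k$.

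One natural approach is to realize $X$ as a contraction of a configuration of smooth rational curves on a smooth rational surface. Concretely, let $Y$ be a smooth projective rational surface --- for example, a suitable blow-up of $\PP_{k}^{2}$ or $\PP^{1}\times\PP^{1}$ at a $\mu_{n}$-equivariant configuration of points --- containing a union $C=C_{1}\cup\cdots\cup C_{m}$ of chains of smooth rational curves whose intersection matrix is negative definite and whose Hirzebruch--Jung continued fractions correspond to cyclic quotient singularities of order $n$. Grauert's contractibility theorem (or Artin's theorem for rational surface singularities) then produces a normal projective surface $X$ together with a contraction $f\colon Y\to X$ sending $C$ to finitely many points. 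An alternative concrete realization is to take $X\subset\PP(1,1,1,n)$ as a generic hypersurface of weighted degree $n+4$ of the form $V(w\,g(x,y,z)+h(x,y,z))$ with $\deg g=4$ and $\deg h=n+4$; for this model, rationality is immediate from the linearity in $w$, but the singularity analysis at the point $[0{:}0{:}0{:}1]$ is subtle because the defining equation fails to be quasi-smooth there.

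\emph{Verification.} For rationality, $X$ is birational to $Y$ by construction (and $Y$ is rational), or in the weighted model, the projection $(x{:}y{:}z{:}w)\mapsto(x{:}y{:}z)$ provides an explicit birational map to $\PP^{2}$ with inverse $(x{:}y{:}z)\mapsto(x{:}y{:}z{:}-h/g)$. For log terminality, the singularities of $X$ are cyclic quotient singularities by construction, which are klt; for surfaces, klt singularities coincide with quotient singularities, and in fact Lemma~\ref{lem:totaldisc-log-resol} together with their explicit minimal resolutions (chains of rational curves) gives the desired total discrepancy bound. For ampleness, one writes $f^{*}K_{X}=K_{Y}+\sum_{i}a_{i}C_{i}$ with $a_{i}\in\QQ$ determined by the configuration, and verifies that the right-hand side is nef and big on $Y$: intersection with each $C_{i}$ vanishes automatically ($C_{i}$ is $f$-exceptional), so the remaining conditions are $f^{*}K_{X}\cdot D\ge0$ for every other curve $D\subset Y$ and $(f^{*}K_{X})^{2}>0$. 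In the weighted-hypersurface model, ampleness follows directly from the adjunction formula $K_{X}=\cO_{X}((n+4)-(n+3))=\cO_{X}(1)$ together with $\QQ$-ampleness of $\cO(1)$ on the weighted projective ambient.

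\emph{Main obstacle.} The ampleness of $K_{X}$, together with the klt verification at the singular points, is the crux. Since $Y$ is a smooth rational surface, $K_{Y}$ is not big (and is often anti-effective), so the positivity of $f^{*}K_{X}$ must come entirely from the correction $\sum_{i}a_{i}C_{i}$ arising from the contracted curves; balancing rationality (favoring few curves contracted) against ampleness (favoring many) is the main combinatorial difficulty. In the weighted hypersurface realization, the parallel difficulty is that the tangent cone of the defining equation at the singular point of the ambient is generically a cone over a smooth plane quartic, which would force the singularity of $X$ to be worse than klt; this forces either a careful specialization of $g,h$ or a different ambient, say $\PP(1,a,b,c)$ with smaller indices. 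The restriction $n\ge5$ is expected to enter at two points: the $\mu_{n}$-equivariant configuration of blow-ups (or the weighted hypersurface structure) requires enough flexibility, which fails for small $n$; and it guarantees that the resulting cyclic quotient singularities $\tfrac{1}{n}(1,q)$ are strictly klt but non-canonical, matching the regime in which the correction term $-N_{\NonC(X)}(x)$ of Conjecture~\ref{conj:generalized-Vojta} genuinely contributes, thereby furnishing the example motivating that correction.
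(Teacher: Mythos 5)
The proposal is a sketch of two possible constructions, neither of which is carried out. In the contraction approach, you never specify which rational surface $Y$ to take, which negative-definite configuration $C$ to contract, or how to verify nefness and bigness of $f^{*}K_{X}=K_{Y}+\sum a_{i}C_{i}$; indeed you explicitly identify this balancing as ``the main combinatorial difficulty'' and leave it unresolved. In the weighted-hypersurface approach you correctly observe that for generic $g$ of degree $4$ the tangent cone of $X\subset\PP(1,1,1,n)$ at $[0{:}0{:}0{:}1]$ is a cone over a smooth plane quartic, whose exceptional curve has genus $3$ and hence the singularity cannot be klt (klt surface singularities are precisely quotient singularities, with exceptional loci being trees of $\PP^{1}$'s); you then gesture at ``a careful specialization of $g,h$ or a different ambient'' without producing one. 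So the proposal does not actually exhibit an $X$ with the required properties.

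The paper's construction circumvents both obstacles by a quite different route: take the Fermat surface $F=V(x_{0}^{n}+x_{1}^{n}+x_{2}^{n}+x_{3}^{n})\subset\PP_{k}^{3}$, which is smooth of general type for $n\ge5$, and let a cyclic group $G=\ZZ/n\ZZ$ act on $\PP^{3}$ by $(x_{0}:x_{1}:x_{2}:x_{3})\mapsto(\zeta x_{0}:\zeta x_{1}:x_{2}:x_{3})$. The fixed locus on $F$ is zero-dimensional ($2n$ points), so the quotient map $\pi\colon F\to X=F/G$ is \'etale in codimension one, giving $\pi^{*}K_{X}=K_{F}$; ampleness of $K_{X}$ is then immediate from ampleness of $K_{F}$ and finiteness of $\pi$, with no combinatorial balancing needed. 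The singularities of $X$ are all quotient, hence klt automatically. Rationality is verified by an explicit $G$-equivariant birational map from an open subset of $\AA^{3}$ to $F$, whose quotient identifies an open subset of $X$ with an open subset of $\AA^{2}$. I would suggest replacing the contraction/weighted-hypersurface sketch with this (or another) quotient construction, since taking finite quotients \'etale in codimension one is the standard mechanism for producing rational varieties of general type while keeping the ampleness verification trivial.
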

\begin{proof}
We construct such an $X$ as the quotient of a Fermat hypersurface
by a finite group action. Let $n\ge5$ be an integer such that $k$
contains a primitive $n$-th root $\zeta$ of $1$. Consider the Fermat
hypersurface of degree $n$,
\[
F:=V(x_{0}^{n}+x_{1}^{n}+x_{2}^{n}+x_{3}^{n})\subset\PP_{k}^{3}.
\]
This is a smooth irreducible projective surface. From the condition
$n\ge5$ and the adjunction formula, the canonical divisor $K_{F}$
is ample. Let $G=\langle g\rangle=\ZZ/n\ZZ$ be a cyclic group of
order $n$ generated by an element $g$. We define a $G$-action on
$\PP^{3}$ by 
\[
g((x_{0}:x_{1}:x_{2}:x_{3}))=(\zeta x_{0}:\zeta x_{1}:x_{2}:x_{3}).
\]
Clearly $F$ is preserved by the action. We see that the fixed point
locus $F^{G}$ in $F$ is 
\[
\{x_{0}=x_{1}=x_{2}^{n}+x_{3}^{n}=0\}\cup\{x_{2}=x_{3}=x_{0}^{n}+x_{1}^{n}=0\},
\]
which has dimesion zero and $2n$ points. Let $X:=F/G$ be the associated
quotient variety and $\pi\colon F\to X$ the natural morphism, which
is étale in codimension one. Since $K_{F}=\pi^{*}K_{X}$ and $K_{F}$
is ample, $K_{X}$ is also ample. The variety $X$ has only quotient
singularities. As is well known, quotient singularities are log terminal
(for instance, see \cite[Cor. 2.43 or p. 103]{MR3057950}).

It remains to show that $X$ is rational. Consider the following locally
closed subvariety of $\AA_{k}^{3}$ with coordinates $x,y,z$, 
\[
W=\left\{ x^{n}+1\ne0,\,z^{n}+\frac{y^{n}+1}{x^{n}+1}=0\right\} \subset\AA_{k}^{3},
\]
and the morphism 
\begin{align*}
\phi\colon W & \to\PP_{k}^{3}\\
(x,y,z) & \mapsto(xz:z:y:1).
\end{align*}
We see that its image is contained in $F\cap\{x_{3}\ne0\}$; we denote
the induced morphism $W\to F$ by $\psi$. The ring homomorphism associated
to 
\[
W\to\AA_{k}^{3}=\{x_{3}\ne0\}
\]
is given by 
\begin{align*}
\alpha\colon k[u_{0},u_{1},u_{2}] & \to\frac{k\left[x,y,z,\frac{1}{x^{n}+1}\right]}{\left\langle z^{n}+\frac{y^{n}+1}{x^{n}+1}\right\rangle }\\
u_{0} & \mapsto xz\\
u_{1} & \mapsto z\\
u_{2} & \mapsto y.
\end{align*}
Since the image of $\alpha$ together with $k$ generates the function
field of $W$ as a field, the morphism $\psi\colon W\to F$ is birational. 

We define a $G$-action on $W$ by 
\[
g\colon(x,y,z)\mapsto(x,y,\zeta z).
\]
Then $\psi$ is $G$-equivariant. Taking quotients, we otain a morphism
\[
\bar{\psi}\colon W/G\to X,
\]
which is again birational. The variety $W/G$ is naturally isomorphic
to the open subvariety $\{x^{n}+1\ne0\}$ of $\AA_{k}^{2}$. In particular,
it is rational. We conclude that $X$ is also rational. We have proved
Proposition \ref{prop:singular-rational-surf}.\end{proof}
\begin{rem}
For a suitable local coordinates $r,s$ around each point of $F^{G}$,
the group action is given by 
\[
g(r,s)=(\zeta r,\zeta s)\text{ or }(\zeta^{-1}r,\zeta^{-1}s).
\]
Namely every singular point of $X$ is the quotient singularity of
type $\frac{1}{n}(1,1)$. It follows that 
\[
\totaldiscrep(X)=\discrep(X)=\frac{2}{n}-1<0
\]
(for instance, see \cite[Cor. 6]{MR2271984}). In particular, $X$
is log terminal, but not canonical. Furthermore, the singular locus
of $X$ coincides with the non-canonical locus $\NonC(X)$. 
\end{rem}

\begin{rem}
The above proposition shows the necessity of the correction term $-N_{\cH(X,D)}$
in Conjecture \ref{conj:generalized-Vojta}. Indeed, for such an $X$,
if we set $D=0$, $r=1$ and $A=K_{X}$, then inequality (\ref{eq:general-Vojta})
is written as 
\begin{equation}
(1-\epsilon)h_{K_{X}}(x)-N_{\NonC(X)}(x)\le O(1).\label{eq:ineq-rat-surf}
\end{equation}
Since $X$ is rational, the $k$-point set $X(k)$ is Zariski dense.
If there was no correction term $N_{\NonC(X)}$, this fact would contradicts
Northcott's theorem. 
\end{rem}

\begin{rem}
From Proposition \ref{prop:rationally-connected}, inequality (\ref{eq:ineq-rat-surf})
does not hold if we replace $\NonC(X)$ with any collection of finitely
many smooth points and if we replace $k$ with some finite extension
of it. 
\end{rem}

\begin{rem}
From Corollary \ref{cor:singular-Lang-NonSC}, if Vojta's conjecture
is true, then all but finitely many irreducible curves on $X\otimes_{k}\bar{k}$
birational to $\PP^{1}$ or an elliptic curve would pass through one
of the singular points of $X$. 
\end{rem}
\appendix

\section{Greatest common divisors and plane curves}

Bugeaud, Corvaja and Zannier \cite{MR1953049,MR2130274} obtained
an upper bound for $\gcd(a-1,b-1)$ for certain families of integer
pairs $(a,b)$. To explain their result in relation to Vojta's conjecture,
Silverman \cite{MR2162351} observed that the greatest common divisor
is essentially a height function associated to a subscheme of codimension
$\ge2$, although he uses the blowup along the subscheme and a height
function associated to the exceptional divisor instead (see also \cite{MR2846352,MR2794199}).
He then formulated a conjectural generalization of the result of Bugeaud,
Corvaja and Zannier. It was in this work that a slightly weaker version
of inequality (\ref{eq:Silverman-ineq}) appeared.

In this Appendix, as an application of Silverman's observation, we
relate estimation of $\gcd(a,b)$ for integer pairs $(a,b)$ satisfying
an algebraic equation with the multiplicity of the corresponding plane
curve at the origin. The only ingredients necessary to do so is basic
properties of heights and a simple analysis of resolution of curves.
\begin{lem}
Let $Z\subset\PP_{\QQ}^{n}$ be the closed subscheme defined by the
ideal $\langle f_{1},\dots,f_{l}\rangle\subset\QQ[x_{0},\dots,x_{n}]$
generated by homogenous polynomials $f_{1},\dots,f_{l}\in\ZZ[x_{0},\dots,x_{n}]$.
For a point $x\in\PP_{\QQ}^{n}(\QQ)$, we write $x=(x_{0}:x_{1}:\cdots:x_{n})$
in terms of integers $x_{i}$ with $\gcd(x_{0},x_{1},\dots,x_{n})=1$
and define $f_{i}(x):=f_{i}(x_{0},\dots,x_{n})\in\ZZ$. Then
\[
N_{Z}(x):=\log\gcd(f_{1}(x),\dots,f_{l}(x))
\]
is a counting function of $Z$ with respect to $S=\{\infty\}$, and 

\[
h_{Z}(x):=\log\gcd(f_{1}(x),\dots,f_{l}(x))-\max_{1\le i\le l}\log\frac{|f_{i}(x)|_{\infty}}{\max\{|x_{0}|_{\infty},\dots,|x_{n}|_{\infty}\}^{\deg f_{i}}}
\]
is a height function of $Z$.\end{lem}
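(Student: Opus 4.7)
The plan is to realize $Z$ as the scheme-theoretic intersection of the principal Cartier divisors $Z_{i}\subset\PP_{\QQ}^{n}$ cut out by the $f_{i}$, and to build $\lambda_{Z,v}$ from the explicit Weil functions of these divisors. Since the defining ideal of $Z$ is by hypothesis the sum $\sum_{i}(f_{i})$, I would first invoke Proposition~\ref{prop:properties-Weil-fn}(\ref{enu:min-lambda}) --- read, in view of the description that $Z\cap Z'$ corresponds to $\fa+\fa'$, as the relation $\lambda_{Z\cap Z'}=\min\{\lambda_{Z},\lambda_{Z'}\}$ up to $M_{\QQ}$-bounded functions --- applied iteratively to write
\[
\lambda_{Z,v}(x) \;=\; \min_{1\le i\le l}\lambda_{Z_{i},v}(x)
\]
at every place $v$ of $\QQ$. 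Plugging in the explicit expression
\[
\lambda_{Z_{i},v}(x) \;=\; -\log\frac{\|f_{i}(x)\|_{v}}{\max_{0\le j\le n}\|x_{j}\|_{v}^{\deg f_{i}}}
\]
furnished by Proposition~\ref{prop:normalization-Weil-fn} and rewriting the minimum as the negative of a maximum yields
\[
\lambda_{Z,v}(x) \;=\; -\max_{1\le i\le l}\log\frac{\|f_{i}(x)\|_{v}}{\max_{0\le j\le n}\|x_{j}\|_{v}^{\deg f_{i}}}.
\]

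Next I would split $h_{Z}(x)=\lambda_{Z,\infty}(x)+N_{Z}(x)$ and analyze the archimedean and finite contributions separately. For a finite prime $p$, the primitivity $\gcd(x_{0},\dots,x_{n})=1$ forces $\max_{j}|x_{j}|_{p}=1$; writing $e_{i,p}$ for the $p$-adic exponent of $f_{i}(x)\in\ZZ$, so that $|f_{i}(x)|_{p}=p^{-e_{i,p}}$, one gets
\[
\lambda_{Z,p}(x) \;=\; -\max_{i}\log|f_{i}(x)|_{p} \;=\; \bigl(\min_{i}e_{i,p}\bigr)\log p.
\]
Since $\min_{i}e_{i,p}$ is precisely the $p$-adic exponent of $\gcd(f_{1}(x),\dots,f_{l}(x))$, summing over all finite primes telescopes into
\[
N_{Z}(x) \;=\; \sum_{p<\infty}\lambda_{Z,p}(x) \;=\; \log\gcd(f_{1}(x),\dots,f_{l}(x)),
\]
which is the first assertion. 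Adding the archimedean contribution
\[
\lambda_{Z,\infty}(x) \;=\; -\max_{1\le i\le l}\log\frac{|f_{i}(x)|_{\infty}}{\max_{0\le j\le n}|x_{j}|_{\infty}^{\deg f_{i}}}
\]
read off from the general formula then reproduces the stated expression for $h_{Z}(x)$.

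I do not expect any substantial obstacle; the argument amounts to carefully unwinding the normalization of the Weil function at each place. The only delicate point is that the identities above hold only modulo $M_{\QQ}$-bounded functions, but this is precisely the sense in which $N_{Z}$ and $h_{Z}$ are called counting and height \emph{functions} of $Z$, so no information is lost.
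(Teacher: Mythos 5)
Your proof is correct and follows essentially the same route as the paper's: identify $Z$ as the intersection of the divisors $Z_i=V(f_i)$, use Proposition~\ref{prop:properties-Weil-fn}(4) together with the explicit formula of Proposition~\ref{prop:normalization-Weil-fn} to write $\lambda_{Z,v}=\min_i\lambda_{Z_i,v}$, simplify at finite primes via primitivity of the coordinates, and sum. The only differences are presentational (you make the iterated min step and the $p$-adic-exponent bookkeeping explicit, where the paper leaves them implicit).
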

\begin{proof}
We first note that for integers $a_{i}$, 
\[
\log\gcd(a_{1},\dots,a_{l})=-\sum_{p\in M_{\QQ};\,p\ne\infty}\max_{i}|a_{i}|_{p}.
\]
From Propositions \ref{prop:properties-Weil-fn} and \ref{prop:normalization-Weil-fn},
\begin{align*}
\lambda_{Z,p}(x): & =\min_{1\le i\le l}\left\{ -\log\frac{|f_{i}(x)|_{p}}{\max\{|x_{0}|_{p},\dots,|x_{n}|_{p}\}^{\deg f_{i}}}\right\} \quad(p\in M_{\QQ})
\end{align*}
is a Weil function of $Z$. For $p\ne\infty$, since $\gcd(x_{0},\dots,x_{n})=1$,
we have
\[
\max\{|x_{0}|_{p},\dots,|x_{n}|_{p}\}=1
\]
and 
\[
\lambda_{Z,p}(x)=-\log\max_{i}|f_{i}(x)|_{p}.
\]
We conclude that
\begin{align*}
N_{Z}(x): & =\sum_{p\in M_{\QQ};\,p\ne\infty}\lambda_{Z,p}(x)\\
 & =-\sum_{p\in M_{\QQ};\,p\ne\infty}\log\max_{i}|f_{i}(x)|_{p}\\
 & =\log\gcd(f_{1}(x),\dots,f_{l}(x))
\end{align*}
is a counting function of $Z$ and
\begin{align*}
h_{Z}(x) & :=N_{Z}(x)+\lambda_{Z,\infty}(x)\\
 & =\log\gcd(f_{1}(x),\dots,f_{l}(x))-\max_{1\le i\le l}\log\frac{|f_{i}(x)|_{\infty}}{\max\{|x_{0}|_{\infty},\dots,|x_{n}|_{\infty}\}^{\deg f_{i}}}
\end{align*}
is a height function of $Z$. \end{proof}
\begin{example}
For $l<n$, let $Z$ be the linear subspace defined by 
\[
x_{0}=x_{1}=\cdots=x_{l}=0.
\]
Then
\[
N_{Z}(x)=\log\gcd(x_{0},\dots,x_{l})
\]
is a counting function of $Z$. Since 
\begin{align*}
 & \max_{0\le i\le l}\frac{|x_{i}|_{\infty}}{\max\{|x_{0}|_{\infty},\dots,|x_{n}|_{\infty}\}}\\
 & =\frac{\max\{|x_{0}|_{\infty},\dots,|x_{l}|_{\infty}\}}{\max\{|x_{0}|_{\infty},\dots,|x_{n}|_{\infty}\}}\\
 & =\min\left\{ 1,\frac{\max\{|x_{0}|_{\infty},\dots,|x_{l}|_{\infty}\}}{\max\{|x_{l+1}|_{\infty},\dots,|x_{n}|_{\infty}\}}\right\} ,
\end{align*}
the function 
\[
h_{Z}(x)=\log\gcd(x_{0},\dots,x_{l})-\log\min\left\{ 1,\frac{\max\{|x_{0}|_{\infty},\dots,|x_{l}|_{\infty}\}}{\max\{|x_{l+1}|_{\infty},\dots,|x_{n}|_{\infty}\}}\right\} 
\]
is a height function of $Z$. \end{example}
\begin{lem}
\label{lem:curve-ht}Let $X$ be an irreducible projective variety
of dimension one over a number field $k$ and $\pi\colon\tilde{X}\to X$
the normalization. Let $Z\subset X$ be a proper closed subscheme
and $l\in\ZZ$ the degree of the scheme-theoretic pull-back $\pi^{-1}Z$
naturally regarded as a divisor. Let $D$ be a divisor of $X$ of
degree $l$ supported in the smooth locus of $X$. Then, for every
$\epsilon>0$, their exist constants $C_{1},C_{2}>0$ such that for
all $x\in(X\setminus Z)(\bar{k})$, 
\begin{equation}
(1-\epsilon)h_{D}(x)-C_{1}\le h_{Z}(x)\le(1+\epsilon)h_{D}(x)+C_{2}.\label{eq:quasi-eq}
\end{equation}
Moreover, if $X$ is rational (that is, birational to $\PP_{k}^{1}$),
then 
\[
h_{Z}(x)=h_{D}(x)+O(1).
\]
 \end{lem}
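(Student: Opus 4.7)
My plan is to pull both sides back to the normalization $\tilde{X}$, where the statement reduces to comparing two effective divisors of the same degree on a smooth projective curve. By the functoriality of Weil functions (Proposition \ref{prop:properties-Weil-fn}(1)), $h_{Z}\circ\pi = h_{\pi^{-1}Z}$ up to a bounded error. Since $D$ is supported in the smooth locus of $X$ and $\pi$ is an isomorphism there, $\pi^{-1}D=\pi^{*}D$ and likewise $h_{D}\circ\pi = h_{\pi^{*}D}$ up to a bounded error. The only $\bar{k}$-points of $X$ whose preimage in $\tilde{X}$ might have a different residue field are those lying over the finitely many singular points of $X$; on this finite set the heights are bounded. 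Outside it we may choose the unique preimage $\tilde{x}$ of $x$ with $k(\tilde{x})=k(x)$, so that $h_{Z}(x)=h_{\pi^{-1}Z}(\tilde{x})+O(1)$ and $h_{D}(x)=h_{\pi^{*}D}(\tilde{x})+O(1)$. By hypothesis, $\pi^{-1}Z$ and $\pi^{*}D$ are divisors of the same degree $l$ on the smooth projective curve $\tilde{X}$, so the lemma reduces to the analogous statement for two divisors $E_{1},E_{2}$ of common degree $l$ on a smooth projective curve.

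If $l=0$ both $Z$ and $D$ are empty and the heights are $O(1)$. Assume $l\ge1$, so that $E_{1}=\pi^{-1}Z$ and $E_{2}=\pi^{*}D$ are both ample, and $N:=E_{1}-E_{2}$ is a divisor of degree $0$. The theory of canonical heights on the Jacobian of a smooth projective curve (see, e.g., \cite{MR1745599}) gives, for any degree-zero divisor $N$ and any ample divisor $A$ on $\tilde{X}$, an estimate of the form
\[
|h_{N}(\tilde{x})| \le C\sqrt{\max\{h_{A}(\tilde{x}),\,1\}} + O(1) \qquad (\tilde{x}\in\tilde{X}(\bar{k})).
\]
Applying this with $A=E_{2}$, we obtain $|h_{E_{1}}(\tilde{x})-h_{E_{2}}(\tilde{x})| \le C\sqrt{\max\{h_{E_{2}}(\tilde{x}),1\}}+O(1)$. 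The elementary inequality $Ct^{1/2}\le \epsilon t + C^{2}/(4\epsilon)$ for $t\ge 0$ then absorbs the square-root term into $\epsilon h_{E_{2}}$ plus an $\epsilon$-dependent constant, producing the quasi-equivalence (\ref{eq:quasi-eq}) after descending via $\pi$.

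For the final assertion, if $X$ is rational then $\tilde{X}\cong\PP_{k}^{1}$, and $\Pic(\PP_{k}^{1})=\ZZ$ via the degree. Hence $E_{1}$ and $E_{2}$ are linearly equivalent and their heights agree up to $O(1)$; descending via $\pi$ gives $h_{Z}=h_{D}+O(1)$ on $(X\setminus Z)(\bar{k})$.

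The one non-formal ingredient is the sub-linear bound on heights of degree-zero divisors, which I expect to be the main obstacle in the sense that it cannot be avoided by purely formal manipulation of Weil functions; everything else is bookkeeping around the normalization map and an application of functoriality.
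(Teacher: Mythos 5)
Your proof is correct and follows essentially the same route as the paper: reduce to the normalization $\tilde X$ via functoriality of Weil/height functions, and then compare the two equal-degree divisors $\pi^{-1}Z$ and $\pi^{*}D$ on the smooth projective curve $\tilde X$ by quasi-equivalence, with linear equivalence handling the rational case. The paper simply cites Lang, \emph{Fundamentals of Diophantine Geometry}, Ch.~4, Cor.~3.5 for the quasi-equivalence step, and what you write is precisely the standard proof of that result: the sub-linear bound $h_{N}=O(\sqrt{h_{A}})$ for degree-zero $N$ coming from heights on the Jacobian, absorbed via $Ct^{1/2}\le\epsilon t+C^{2}/(4\epsilon)$. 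One small slip worth flagging: $l=0$ forces $Z=\emptyset$ (a nonempty zero-dimensional subscheme would pull back to a nonempty effective divisor), but it does not force $D=0$, so the sentence ``both $Z$ and $D$ are empty'' overstates; a nonzero degree-zero $D$ on a positive-genus curve is an edge case the quasi-equivalence step does not cover, though it never arises in the paper's applications.
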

\begin{proof}
Let $\tilde{Z}:=\pi^{-1}Z$ and $\tilde{D}:=\pi^{*}D$. Since they
are divisors of equal degree, height functions $h_{\tilde{Z}}$ and
$h_{\tilde{D}}$ are quasi-equivalent (see \cite[Cor. 3.5, Ch. 4]{MR715605}),
hence so are $h_{D}$ and $h_{Z}$; it exactly means (\ref{eq:quasi-eq}).
If $X$ is rational, then $\tilde{Z}$ and $\tilde{D}$ are linearly
equivalent. Therefore $h_{\tilde{Z}}$ and $h_{\tilde{D}}$ differs
only by a bounded function, and the same holds for $h_{Z}$ and $h_{D}$.\end{proof}
\begin{thm}
\label{thm:h_O}Let $X\subset\PP_{k}^{2}$ be an integral plane curve
of degree $d$ and let $O:=(0:0:1)\in\PP_{k}^{2}(k)$. Suppose that
$X$ has multiplicity $m$ at $O$, that is, $m$ is the largest integer
$n$ such that $\cI_{X,O}\subset\fm_{O}^{n}$, where $\cI_{X}\subset\cO_{\PP_{k}^{2}}$
is the defining ideal sheaf of $X$, $\cI_{X,O}$ is its stalk at
$O$ and $\fm_{O}$ is the maximal ideal of the local ring $\cO_{\PP_{k}^{2},O}$.
Let $h$ be the standard logarithmic height on $\PP_{k}^{2}$ given
by
\[
h((x:y:z))=\sum_{w\in M_{L}}\log\max\{\Vert x\Vert_{w},\Vert y\Vert_{w},\Vert z\Vert_{w}\}
\]
for so large finite extention $L/k$ that $x,y,z\in L$. Then, for
every $\epsilon>0$, their exist constants $C_{1},C_{2}>0$ such that
for all $x\in(X\setminus\{O\})(\bar{k})$, 
\[
\left(\frac{m}{d}-\epsilon\right)h(x)-C_{1}\le h_{O}(x)\le\left(\frac{m}{d}+\epsilon\right)h(x)+C_{2}.
\]
Moreover, if $X$ is rational, then 
\[
h_{O}(x)=\frac{m}{d}h(x)+O(1).
\]
\end{thm}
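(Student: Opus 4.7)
The plan is to reduce the theorem to Lemma \ref{lem:curve-ht}. Let $Z$ be the scheme-theoretic intersection $X \cap \{O\}$, i.e.\ the closed subscheme of $X$ defined by the ideal $\fm_O \cdot \cO_X$. By the functoriality of Weil functions (Proposition \ref{prop:properties-Weil-fn}(1)), the restriction of $h_O$ to $(X \setminus \{O\})(\bar k)$ agrees with $h_Z$ up to an $M_k$-bounded function. The theorem will then follow from two applications of the lemma, once we show that $\pi^{-1} Z$ has degree $m$ on the normalization $\pi \colon \tilde X \to X$.

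To prove $\deg \pi^{-1} Z = m$, I would consider the blowup $f \colon \widetilde{\PP^2_k} \to \PP^2_k$ at $O$ with exceptional divisor $E$, and let $X' \subset \widetilde{\PP^2_k}$ be the strict transform of $X$. By the definition of multiplicity, $f^* X = X' + m E$, hence
\[
X' \cdot E = f^*X \cdot E - m E^2 = 0 - m(-1) = m.
\]
Since $\fm_O \cdot \cO_{\widetilde{\PP^2_k}} = \cO(-E)$, the ideal $\fm_O \cdot \cO_{X'}$ defines the Cartier divisor $E|_{X'}$ on $X'$, which has degree $m$. The normalization factors through $X'$, and pulling back the effective Cartier divisor $E|_{X'}$ along the finite birational morphism $\tilde X \to X'$ preserves its total degree; therefore $\deg \pi^{-1} Z = m$.

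Now I would apply Lemma \ref{lem:curve-ht} first with $Z$ as above and $D$ a divisor on $X$ of degree $m$ supported in the smooth locus (e.g.\ $m$ distinct smooth $k$-points), obtaining
\[
(1 - \epsilon)\, h_D(x) - C_1 \le h_O(x) \le (1 + \epsilon)\, h_D(x) + C_2,
\]
with equality up to $O(1)$ when $X$ is rational. Applying the lemma a second time with $Z$ replaced by $X \cap H$ for a generic line $H \subset \PP^2_k$ (a degree-$d$ divisor on $X$ supported in the smooth locus) produces $h|_X = h_{X \cap H} + O(1)$, since $X \cap H$ represents the restriction of the hyperplane class to $X$. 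On the smooth projective curve $\tilde X$, heights of divisors of positive degrees $m$ and $d$ are quasi-equivalent with ratio $m/d$, so $h_D$ is quasi-equivalent to $(m/d)\, h|_X$; when $X$ is rational, all divisors of equal degree on $\tilde X \cong \PP_k^1$ are linearly equivalent, upgrading the quasi-equivalence to equality up to $O(1)$. Chaining the two estimates yields the theorem.

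The main obstacle is the degree computation $\deg \pi^{-1} Z = m$: one cannot avoid invoking intersection theory on the blowup (or an equivalent device), because $X$ may have arbitrarily complicated singularities at $O$ that are not uniformized by a single local parametrization and may not be resolved by a single blowup.
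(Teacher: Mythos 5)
Your proof is correct, and the overall reduction to Lemma \ref{lem:curve-ht} is the same as the paper's. The genuine divergence is in the key degree computation $\deg \pi^{-1}O = m$. The paper passes to the completion over $\bar{k}$, writes the germ as a union of irreducible branches, observes that both the multiplicity and the pull-back degree are additive over branches, and for a single branch $\Spec\bar{k}[[g,h]]$ reads off $m = \min\{\ord g,\ord h\} = l$ from the Puiseux-type parametrization. You instead stay global: you blow up $\PP^2_k$ at $O$, use $f^*X = X' + mE$ (the defining property of multiplicity) and $E^2=-1$ to get $X'\cdot E = m$, identify $\fm_O\cdot\cO_{X'}$ with the Cartier divisor $E|_{X'}$, and note that degree is preserved under the finite birational map $\tilde X \to X'$. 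Both routes are valid; yours avoids working in completed local rings and handles all branches simultaneously, at the price of invoking the intersection pairing and $E^2=-1$ on the blown-up surface rather than only formal power-series algebra. Your handling of the quasi-equivalence $h_D \sim (m/d)h|_X$ is also more explicit than the paper's (which, somewhat loosely, feeds $D=L\cap X$ of degree $d$ into a lemma that formally asks for a divisor of degree $l=m$, leaving the rescaling implicit). Two small points: the second ``application of the lemma'' you describe is really just functoriality of Weil functions for the divisor $X\cap H$, not a use of Lemma \ref{lem:curve-ht}; and the parenthetical ``$m$ distinct smooth $k$-points'' is not always available over a fixed $k$ --- one should take any effective divisor of degree $m$ supported in $X_{\sm}$, or work with $\QQ$-divisors as the degree-scaling argument already forces.
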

\begin{proof}
The standard height $h$ is a height function of a line in $\PP_{k}^{2}$.
Take a general line $L$ which does not meet any singularity of $X$.
We regard the closed point $O$ as a reduced scheme and apply Lemma
\ref{lem:curve-ht} to $Z=O$ and $D=L\cap X$. To see the assertion,
we need to show that $m$ is equal to $l$ as in Lemma \ref{lem:curve-ht}.
Since these numbers are stable under extension of the base field,
we consider a plane curve germ $\hat{X}=\Spec\bar{k}[[x,y]]/\langle f\rangle$
defined over $\bar{k}$. The multiplicity is then equal to the order
of $f$. If $\hat{X}_{i}$, $i=1,\dots,r$, are the irreducible components
of $\hat{X}$ and if $m_{i}$ and $l_{i}$ are the numbers similarly
defined for $\hat{X_{i}}$, then 
\[
m=\sum_{i=1}^{r}m_{i}\text{ and }l=\sum_{i=1}^{r}l_{i}.
\]
Therefore, we may assume that $\hat{X}$ is irreducible. Then $\hat{X}\cong\Spec\bar{k}[[g,h]]$,
where $g,h\in\bar{k}[[t]]$ are power series of distinct orders such
that $\Spec\bar{k}[[t]]\to\hat{X}$ is birational. Now it is easy
to see that
\[
m=\min\{\ord(g),\ord(f)\}=l.
\]
We have completed the proof. 
\end{proof}
Note that the theorem is valid even if $O\notin X$; then $m=0$ and
$h_{O}$ is bounded (Lemma \ref{lem:disjoint-height-bounded}). The
theorem asserts that a singular point has more rational points around
it more than a smooth point does and that its extent is determined
by the multiplicity, the most fundamental invariant of plane curve
singularities.
\begin{rem}
Theorem \ref{thm:h_O} is non-trivial only when $X$ has infinitely
many $k$-points; it means from Faltings' theorem that $X$ has a
geometric irreducible component birational to $\PP^{1}$ or an elliptic
curve. If $X$ is smooth, then this is possible only when $d\le3$.
However, if we allow singularities, then there exist plane curves
of arbitrary degree having infinitely many $k$-points.
\end{rem}
Specializing the theorem to the case $k=\QQ$ and to $\QQ$-rational
points, we obtain:
\begin{cor}
Let $f(x,y)\in\QQ[x,y]$ be an irreducible polynomial and let $d$
and $m$ be the degree and the order of $f$ respectively. Then, for
every $\epsilon>0$, their exist positive constants $C_{1},C_{2}$
such that for all triplets $(x,y,z)\ne(0,0,0),\,(0,0,1)$ of integers
satisfying $\gcd(x,y,z)=1$ and $f(x,y,z)=0$, we have 
\begin{multline}
\left(\frac{m}{d}-\epsilon\right)\log\max\{|x|,|y|,|z|\}-C_{1}\le\log\gcd(x,y)-\log\min\left\{ 1,\frac{\max\{|x|,|y|\}}{|z|}\right\} \\
\le\left(\frac{m}{d}+\epsilon\right)\log\max\{|x|,|y|,|z|\}+C_{2}.\label{eq:gcd}
\end{multline}
Moreover, if $X$ is rational, then
\[
\log\gcd(x,y)-\log\min\left\{ 1,\frac{\max\{|x|,|y|\}}{|z|}\right\} =\frac{m}{d}\log\max\{|x|,|y|,|z|\}+O(1).
\]

\end{cor}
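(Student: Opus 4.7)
The plan is to apply Theorem \ref{thm:h_O} to the projective closure of the plane curve defined by $f$ and then translate the resulting intrinsic inequality into the explicit arithmetic statement \eqref{eq:gcd}.

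First, I would set up the curve. Let $F(x,y,z) := z^{d}f(x/z,y/z)\in\ZZ[x,y,z]$ be the homogenization of $f$; since $f$ is irreducible of degree $d$, so is $F$, and hence $X:=V(F)\subset\PP_{\QQ}^{2}$ is an integral plane curve of degree $d$. The point $O=(0:0:1)$ lies in the chart $\{z\ne0\}$, where the equation of $X$ is exactly $f(x,y)$; hence the multiplicity of $X$ at $O$, defined via $\cI_{X,O}\subset\fm_{O}^{n}$, equals the order of $f$ at the origin, which is $m$. A coprime integer triple $(x,y,z)\ne(0,0,0)$ satisfying $f(x,y,z)=0$ determines a $\QQ$-point of $X$, and this point lies in $X\setminus\{O\}$ exactly when $(x,y,z)\ne(0,0,1)$, accounting for both excluded triples in the corollary.

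Next, I would express the two height functions on such representatives in closed form. For the standard height, Proposition \ref{prop:normalization-Weil-fn} (applied to a coordinate hyperplane) combined with $\gcd(x,y,z)=1$ yields $h((x:y:z))=\log\max\{|x|,|y|,|z|\}$, because $\max\{|x|_{p},|y|_{p},|z|_{p}\}=1$ for every finite prime $p$. For the height of $O$, the ideal of $O$ in $\PP_{\QQ}^{2}$ is generated by $x$ and $y$, so the first lemma of the appendix (with $f_{1}=x,f_{2}=y$), or equivalently Proposition \ref{prop:properties-Weil-fn}\eqref{enu:min-lambda}, produces
\[
\lambda_{O,v}((x:y:z))=-\log\frac{\max\{\Vert x\Vert_{v},\Vert y\Vert_{v}\}}{\max\{\Vert x\Vert_{v},\Vert y\Vert_{v},\Vert z\Vert_{v}\}}.
\]
Summing over the finite places and using coprimality collapses the non-archimedean contribution to $\log\gcd(x,y)$, while the archimedean place contributes $-\log\min\{1,\max\{|x|,|y|\}/|z|\}$ after the elementary simplification $\max\{|x|,|y|\}/\max\{|x|,|y|,|z|\}=\min\{1,\max\{|x|,|y|\}/|z|\}$. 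Hence
\[
h_{O}((x:y:z))=\log\gcd(x,y)-\log\min\{1,\max\{|x|,|y|\}/|z|\}.
\]

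With both quantities in closed form, the two-sided bound of Theorem \ref{thm:h_O}, namely $(m/d-\epsilon)h(P)-C_{1}\le h_{O}(P)\le(m/d+\epsilon)h(P)+C_{2}$ for all $P\in(X\setminus\{O\})(\bar{\QQ})$, specializes on the described integer $\QQ$-points to exactly the inequality \eqref{eq:gcd}; the sharpening to an equality up to $O(1)$ when $X$ is rational likewise descends directly from the second assertion of Theorem \ref{thm:h_O}. The plan contains no substantial obstacle: the entire content is a dictionary between intrinsic heights and concrete arithmetic expressions. The only items that require care are the identification of the multiplicity at $O$ with the order of $f$, the extraction of $\gcd(x,y)$ from the finite-place sum via the product formula, and the one-line archimedean simplification — none of which pose any difficulty.
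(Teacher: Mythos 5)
Your proposal is correct and follows exactly the route the paper intends: the corollary is stated as a direct specialization of Theorem \ref{thm:h_O} to $k=\QQ$ and $\QQ$-rational points, using the explicit formula for $h_O$ from the preceding example on linear subspaces and the identification of the standard height with $\log\max\{|x|,|y|,|z|\}$ on coprime integer triples. The paper gives no separate proof for this corollary, and your writeup supplies precisely the dictionary (homogenization of $f$, multiplicity equals order, non-archimedean sum collapses to $\log\gcd$, archimedean term simplifies to the $\min$ expression) that the reader is expected to fill in.
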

Furthermore, excluding points close to the origin relative to the
Euclidean topology, we obtain the following simpler estimation. 
\begin{cor}
\label{cor:gcd}With the same notation as above, for every $\epsilon,\delta>0$,
their exist positive constants $C_{1}',C_{2}'$ such that for all
triplets $(x,y,z)\ne(0,0,0),\,(0,0,1)$ of integers satisfying $\gcd(x,y,z)=1$,
$f(x,y,z)=0$ and $\max\{|x/z|,|y/z|\}\ge\delta$, we have 
\[
C_{1}'\max\{|x|,|y|\}^{m/d-\epsilon}\le\gcd(x,y)\le C_{2}'\max\{|x|,|y|\}^{m/d+\epsilon}.
\]
Moreover, if $X$ is rational, then we can replace $\epsilon$ with
zero. \end{cor}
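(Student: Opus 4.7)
The plan is to deduce this corollary directly from the preceding one by showing that the two bookkeeping nuisances in that inequality---the correction $\log\min\{1,\max\{|x|,|y|\}/|z|\}$ on the left-hand side, and the mismatch between $\log\max\{|x|,|y|,|z|\}$ and $\log\max\{|x|,|y|\}$ on the right-hand side---become harmless once the hypothesis $\max\{|x/z|,|y/z|\}\ge\delta$ is imposed.

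Writing $M=\max\{|x|,|y|\}$ and $M'=\max\{|x|,|y|,|z|\}$, the hypothesis immediately gives $z\ne 0$ and $|z|\le M/\delta$, so $M\le M'\le\max\{1,\delta^{-1}\}\,M$ and $\min\{1,M/|z|\}\ge\min\{1,\delta\}$. Both $\log M'-\log M$ and $\log\min\{1,M/|z|\}$ are therefore bounded by a constant depending only on $\delta$; I would record these estimates first.

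Next I would pick an auxiliary $\epsilon'\in(0,\epsilon)$ and apply the previous corollary with $\epsilon'$ in place of $\epsilon$. Combined with the two bounded terms above, this yields
\[
(m/d-\epsilon')\log M'-C_1''\;\le\;\log\gcd(x,y)\;\le\;(m/d+\epsilon')\log M'+C_2''
\]
for constants $C_1'',C_2''$ depending on $\delta$. To replace $\log M'$ by $\log M$ I would decompose
\[
(m/d\pm\epsilon')\log M'=(m/d\pm\epsilon)\log M+(m/d\pm\epsilon')(\log M'-\log M)\mp(\epsilon-\epsilon')\log M,
\]
so that the middle summand is bounded and, because the exclusions $(0,0,0)$ and $(0,0,1)$ together with $z\ne 0$ force $M\ge 1$ and hence $\log M\ge 0$, the term $\mp(\epsilon-\epsilon')\log M$ runs in the favourable direction for each of the two inequalities. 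Absorbing constants and exponentiating then produces $C_1'M^{m/d-\epsilon}\le\gcd(x,y)\le C_2'M^{m/d+\epsilon}$. In the rational case, the previous corollary is a genuine equality up to $O(1)$, and the same bookkeeping---now with $\epsilon=\epsilon'=0$---gives the stated $\epsilon$-free version.

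There is no real obstacle here beyond keeping track of signs in the final step: one needs both $\epsilon'<\epsilon$ and $\log M\ge 0$ so that the $\delta$-dependent error produced by replacing $\log M'$ by $\log M$ is absorbed by the slack $(\epsilon-\epsilon')\log M\ge 0$ rather than leaking into the exponent.
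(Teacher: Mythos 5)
Your proof is correct and follows the same route as the paper's: observe that the hypothesis $\max\{|x/z|,|y/z|\}\ge\delta$ forces both the correction $-\log\min\{1,\max\{|x|,|y|\}/|z|\}$ and the discrepancy $\log\max\{|x|,|y|,|z|\}-\log\max\{|x|,|y|\}$ to be bounded (by constants depending only on $\delta$), so both disappear into $C_1',C_2'$ upon exponentiating. The detour through an auxiliary $\epsilon'<\epsilon$ is unnecessary, though harmless: once $\log M'-\log M$ is bounded, the term $(m/d\pm\epsilon)(\log M'-\log M)$ is itself bounded regardless of sign and can be absorbed directly into the additive constants, so one may apply the preceding corollary at $\epsilon$ itself rather than at a strictly smaller $\epsilon'$.
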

\begin{proof}
From the condition $\max\{|x/z|,|y/z|\}\ge\delta$, the term 
\[
-\log\min\left\{ 1,\frac{\max\{|x|,|y|\}}{|z|}\right\} 
\]
in (\ref{eq:gcd}) is bounded and hence can be eliminated. If $\delta\ge1$,
then the condition $\max\{|x/z|,|y/z|\}\ge\delta$ implies 
\[
\log\max\{|x|,|y|,|z|\}-\log\max\{|x|,|y|\}=0.
\]
If $\delta<1$, then 
\begin{align*}
0\le & \log\max\{|x|,|y|,|z|\}-\log\max\{|x|,|y|\}\\
 & \le-\log\max\{|x/z|,|y/z|\}\le-\log\delta.
\end{align*}
Therefore $\log\max\{|x|,|y|,|z|\}$ in (\ref{eq:gcd}) can be replaced
with $\log\max\{|x|,|y|\}$. Writing the resulting inequalities mutliplicatively,
we obtain the corollary.
\end{proof}
Note that the condition imposed in the last corollary on triplets
$(x,y,z)$ are satisfied by $(x,y,1)$ for integer pairs $(x,y)$
with $f(x,y)=0$.
\begin{example}
Let $X\subset\AA_{\QQ}^{2}$ be the affine plane curve defined by
$x^{d}=y^{m}$ for coprime positive integers $d,m$ with $d>m$. This
curve is rational and has degree $d$ and multiplicity $m$ at $O$.
An integral point $p$ of $X$ is of the form $(a^{m},a^{d})$ for
an integer $a$. With $O=(0,0)$, we have 
\[
\gcd(a^{m},a^{d})=|a^{m}|=\max\{|a^{m}|,|a^{d}|\}^{m/d}.
\]

Next consider the affine plane curve $Y$ defined by $(x+1)^{d}=(y+1)^{m}$
for the same $d,m$ as above. This is a translation of $X$. Note
that $Y$ contains $O$ as a smooth point, namely $Y$ has multiplicity
one at $O$. An integral point $p$ of $Y$ is of the form $(a^{m}-1,a^{d}-1)$
for an integer $a$. We claim that for $|a|>1$, 
\[
\gcd(a^{m}-1,a^{d}-1)=|a-1|.
\]
To show this, we need to show that 
\[
\gcd(a^{m-1}+a^{m-1}+\cdots+1,a^{d-1}+a^{d-1}+\cdots+1)=1,
\]
which can be proved by induction and using the fact that
\begin{align*}
 & \gcd(a^{m-1}+a^{m-1}+\cdots+1,a^{d-1}+a^{d-1}+\cdots+1)\\
 & =\gcd(a^{m-1}+a^{m-1}+\cdots+1,a^{(d-m)-1}+a^{(d-m)-1}+\cdots+1).
\end{align*}
From the claim, 
\[
\gcd(a^{m}-1,a^{d}-1)\sim\max\{|a^{m}-1|,|a^{d}-1|\}^{1/d}\quad(|a|\to\infty).
\]

Finally consider the curve $Z$ defined by $x^{d}=(y+1)^{m}$. This
curve does not pass through the origin, equivalently it has multiplicity
$m=0$ at $O$. An integral point $p$ of $Z$ is of the form $(a^{m},a^{d}-1)$
for an integer $a$. Clearly 
\[
\gcd(a^{m},a^{d}-1)=1=\max\{|a^{m}|,|a^{d}-1|\}^{0/d}.
\]

\end{example}
\bibliographystyle{alpha}
\bibliography{../mybib}

\end{document}